\documentclass{article}
\usepackage[a4paper,margin=0.81
in,top=3cm,bottom=3.5cm,footskip=0.5cm]{geometry}
\usepackage[a4paper]{geometry}
\usepackage[utf8]{inputenc}
\usepackage{amsmath,amsfonts,amssymb,amsthm,bbm,mathrsfs}
\usepackage[shortlabels]{enumitem}
\usepackage{xcolor}
\usepackage{graphicx}
\usepackage{tikz}
\usepackage{appendix}
\newcounter{assumption}
\renewcommand{\theassumption}{A\arabic{assumption}}
\usepackage{setspace}
\usepackage{fancyhdr}
\usepackage[
    backref=page
]{hyperref}

\hypersetup{
    colorlinks=true,
    linkcolor=blue!60!cyan,
    filecolor=magenta,      
    urlcolor=blue!60!cyan,
    citecolor=blue!60!cyan,
    pdfpagemode=FullScreen,
}

\newtheorem{thm}{Theorem}[section]

\newtheorem{rmk}{Remark}[section]

\newtheorem{prop}{Proposition}[section]
\newtheorem{lm}{Lemma}[section]

\numberwithin{equation}{section}

\fancypagestyle{firstpage}{
\fancyhf{} 

\fancyfoot[L]{%
\footnotesize
\rule{1.5cm}{0.8pt}\\
$^*$ Corresponding author.\\[0.15cm]
$^\dagger$ Cadi Ayyad University, National School of Applied Sciences,  
Laboratory of Mathematics, Modeling and Automatic Systems,  
B.P. 575, Marrakesh, Morocco. \\
$^\ddagger$ Dipartimento di Matematica, Università degli Studi di Salerno,  Via Giovanni Paolo II, 132, 84084 Fisciano (SA), Italy.  \\
E-mail addresses: \textcolor{blue!60!cyan}{\texttt{s.boulite@uca.ma}} (S. Boulite), \textcolor{blue!60!cyan}{\texttt{aelgrou@unisa.it}} (A. Elgrou), \textcolor{blue!60!cyan}{\texttt{arhandi@unisa.it}} (A. Rhandi).
}
}
\title{\textbf{Insensitizing Control Problems for Coupled Stochastic Parabolic Systems with State and Gradient Observations}}

\author{Said Boulite$^\dagger$, \,Abdellatif Elgrou$^\ddagger\,^*$ \,and\, Abdelaziz Rhandi$^\ddagger$}

\date{}

\begin{document}

\maketitle
\vspace{-2em}

\maketitle
\vspace{-3em}

\thispagestyle{firstpage}
\begin{center}
\end{center}
\begin{center}
\end{center}

\begin{abstract}
We study insensitizing control problems for a class of coupled linear stochastic parabolic systems. We establish the existence of controls such that a sentinel functional, involving localized observations of the state variables and their spatial gradients, is insensitive to small perturbations of the null initial data. We first reformulate the insensitizing control problem as a null controllability problem for a coupled forward--backward stochastic parabolic system, in which the observation terms induce both zeroth- and second-order coupling terms. By duality, the analysis is reduced to an observability inequality for the corresponding adjoint system. The main analytical contribution is the derivation of new global Carleman estimates for coupled stochastic parabolic systems with zeroth- and second-order coupling terms, under suitable geometric assumptions on the control and observation regions. These estimates yield the required observability inequalities and, consequently, the existence of insensitizing controls. Furthermore, depending on the value of a weighting parameter $\beta\in[0,1]$, which determines the relative contributions of the two state components to the sentinel functional, we consider two cases. If $\beta\in\{0,1\}$, the sentinel functional depends on only one state component, and a single localized control acting in the drift of the first equation is sufficient. In contrast, if $\beta\in(0,1)$, both state components contribute to the sentinel functional, and two localized controls acting in the drift terms of the two equations are sufficient. Moreover, the control strategy in this paper involves two additional controls acting throughout the diffusion terms.
\end{abstract}

\maketitle
\smallskip

\noindent\textbf{2020 Mathematics Subject Classification:} 35K52, 60H15, 93B05, 93B07, 93B35.\\\\
\textbf{Keywords:} Insensitizing control; Stochastic parabolic system; Null controllability; Carleman estimate.

\section{Introduction and Main Results}
Let \(G\subset\mathbb{R}^N\), \(N\geq1\), be a nonempty bounded domain
with smooth boundary \(\Gamma=\partial G\), and let \(T>0\). Denote by
\(G_i\), \(1\leq i\leq2\), the control regions and by
\(\mathcal{O}_i\), \(1\leq i\leq4\), the observation regions, all of which
are assumed to be nonempty open subsets of \(G\). For any measurable set \(S\), we denote by \(\chi_S\) its characteristic function. Throughout the paper, we use the following notation:

\[
Q = (0,T) \times G, \quad \text{and} \quad \Sigma = (0,T) \times \Gamma.
\]

Let \( (\Omega, \mathcal{F}, \{\mathcal{F}_t\}_{t \geq 0}, \mathbb{P}) \) be a fixed complete filtered probability space, on which a one-dimensional standard Brownian motion \( W(\cdot) \) is defined. Here, \( \{\mathcal{F}_t\}_{t \geq 0} \) is the natural filtration generated by \( W(\cdot) \), and augmented by all the \( \mathbb{P} \)-null sets in \( \mathcal{F} \). Let \( \mathcal{X} \) be a Banach space, and let \( C([0,T]; \mathcal{X}) \) denote the Banach space of all continuous \( \mathcal{X} \)-valued functions defined on \( [0,T] \). For some sub-\(\sigma\)-algebra \( \mathcal{G} \subset \mathcal{F} \), we define the Banach space \( L^2_{\mathcal{G}}(\Omega; \mathcal{X}) \) as the space of all \( \mathcal{X} \)-valued \( \mathcal{G} \)-measurable random variables \( X \) such that \( \mathbb{E} \left[ \|X\|_\mathcal{X}^2 \right] < \infty \), with the canonical norm. Additionally, we denote by \( L^2_\mathcal{F}(0,T; \mathcal{X}) \) the Banach space consisting of all \( \mathcal{X} \)-valued \( \{\mathcal{F}_t\}_{t \geq 0} \)-adapted stochastic processes \( X(\cdot) \) such that
$\mathbb{E} \left[ \| X(\cdot) \|_{L^2(0,T; \mathcal{X})}^2 \right] < \infty$, with the canonical norm. Similarly, \( L^\infty_\mathcal{F}(0,T; \mathcal{X}) \) is the Banach space of all \( \mathcal{X} \)-valued \( \{\mathcal{F}_t\}_{t \geq 0} \)-adapted essentially bounded stochastic processes, with the canonical norm. The space \( L^2_\mathcal{F}(\Omega; C([0,T]; \mathcal{X})) \)  is the Banach space consisting of all \( \mathcal{X} \)-valued \( \{\mathcal{F}_t\}_{t \geq 0} \)-adapted continuous stochastic processes \( X(\cdot) \) such that $\mathbb{E} \left[ \|X(\cdot)\|_{C([0,T]; \mathcal{X})}^2 \right] < \infty$,
with the canonical norm. 
Similarly, one can define the space $L^\infty_\mathcal{F}(\Omega;C^n([0,T];\mathcal{X}))$ for any positive integer $n$. 

Throughout this paper, we assume that the coefficients 
\(\sigma_{ij}^m : \Omega \times Q \to \mathbb{R}\) (\(i,j=1,2,\dots,N\); \,\(m=1,2\)) satisfy the following assumptions:

\begin{enumerate}[(1)]
    \item \(\sigma_{ij}^m=\sigma_{ij}^m(\omega, t, x) \in L^\infty_\mathcal{F}(\Omega;C^1([0,T];W^{2,\infty}(G)))\) and \(\sigma_{ij}^m = \sigma_{ji}^m\), for any \(1 \leq i,j \leq N\), \;\(m=1,2\).
    \item There exists a constant \(\sigma_0 > 0\) such that 
    \begin{align}\label{assmponalpha}
        \sum_{i,j=1}^N \sigma_{ij}^m(\omega, t, x) \kappa_i \kappa_j \geq \sigma_0 |\kappa|^2, \qquad \text{for any }\;\, (\omega, t, x, \kappa) \in \Omega \times Q \times \mathbb{R}^N,\quad m=1,2,
    \end{align}
    where \(x = (x_1, \dots, x_N)\) and \(\kappa = (\kappa_1, \dots, \kappa_N)\).
\end{enumerate}
For \(m=1,2\), we define the second-order differential operators \(\mathcal{L}_m\) by
\begin{align}\label{definofopeLi}
\mathcal{L}_m  = \sum_{i,j=1}^N \frac{\partial}{\partial x_i}\left(\sigma_{ij}^m(\omega, t, x)\frac{\partial }{\partial x_j}\right),\qquad (\omega, t, x)\in\Omega\times Q.
\end{align}

We introduce the following control spaces. The full control space is defined by

$$
\mathcal{U}
:=
L^2_{\mathcal F}(0,T;L^2(G_0))
\times
L^2_{\mathcal F}(0,T;L^2(G_1))
\times L^2_{\mathcal F}(0,T;L^2(G))\times L^2_{\mathcal F}(0,T;L^2(G)),
$$

whereas the control space associated with the first control region \(G_0\) is given by

$$
\mathcal{U}_0
:=
L^2_{\mathcal F}(0,T;L^2(G_0))
\times
L^2_{\mathcal F}(0,T;L^2(G))\times L^2_{\mathcal F}(0,T;L^2(G)).
$$

We consider the following coupled forward stochastic parabolic system
\begin{equation}\label{ass15}
\begin{cases}
\begin{array}{ll}
dy - \mathcal{L}_1 y \,dt = \Big[\xi_1 + a_{11} y + a_{12} z + B_{11}\cdot\nabla y + B_{12}\cdot\nabla z +  u_1 \chi_{G_0}\Big] \,dt + \Big[b_{11} y + b_{12} z + u_3\Big] \,dW(t) & \textnormal{in } Q, \\[0.8em]
dz - \mathcal{L}_2 z \,dt = \Big[\xi_2 + a_{21} y + a_{22} z + B_{21}\cdot\nabla y + B_{22}\cdot\nabla z + u_2 \chi_{G_1}\Big] \,dt + \Big[b_{21} y + b_{22} z  + u_4\Big] \,dW(t) & \textnormal{in } Q, \\[0.8em]
y = z = 0 & \textnormal{on } \Sigma, \\[0.8em]
(y(0,\cdot),z(0,\cdot)) = (\tau_1 \widehat{y}_0,\tau_2 \widehat{z}_0) & \textnormal{in } G,
\end{array}
\end{cases}
\end{equation}
where $(y,z)$ is the state variable, \( \xi_1,\xi_2 \in L^2_\mathcal{F}(0,T; L^2(G)) \) represent source terms. The coefficients \(a_{ij}\), \(b_{ij}\), and \(B_{ij}\) are assumed to satisfy
\[
a_{ij},\, b_{ij} \in L^\infty_{\mathcal{F}}(0,T; L^\infty(G)), 
\quad 
B_{ij}\in L^\infty_{\mathcal{F}}(0,T; L^\infty(G; \mathbb{R}^N)),
\quad \text{for } 1 \leq i,j \leq 2.
\] 
The control variable consists of four controls. The controls \(u_1\) and
\(u_2\) act, respectively, on the drift terms and are localized in the
spatial control regions \(G_0\) and \(G_1\), while the controls \(u_3\) and
\(u_4\) act throughout \(G\) in the diffusion terms, such that $(u_1,u_2,u_3,u_4) \in \mathcal{U}$. We identify \(u_i\), \(i=1,2\), with their extensions by zero to \(G\). The scalars \( \tau_1, \tau_2 \in \mathbb{R} \) are small unknown parameters. In this context, \( \tau_1 \widehat{y}_0 \) and \( \tau_2 \widehat{z}_0 \) represent small perturbations of the null initial data \( (0,0) \). We assume that \( \widehat{y}_0, \widehat{z}_0 \in L^2_{\mathcal{F}_0}(\Omega;L^2(G)) \) are unknown random variables satisfying the normalization condition
\begin{equation}\label{normmc}
\mathbb{E}\|(\widehat{y}_0,\widehat{z}_0)\|^2_{(L^2(G))^2} = 1.
\end{equation}

From a mathematical modeling standpoint, when $\xi_i = u_i = 0$
 and $\tau_1 = \tau_2 = 0$, system \eqref{ass15}
reduces to a diffusion model arising in stochastic reaction--diffusion--convection system. It describes the temporal evolution of the density of various quantities, including heat, bacterial populations, and chemical concentrations. The inclusion of stochastic noise driven by Brownian motion leads to a stochastic formulation that captures the cumulative effect of small, independent random perturbations acting during the evolution of the density. For further details on such models, we refer the reader to
\cite{DapratoZabcz} and \cite[Chapter~5]{lu2021mathematical},
and the references therein.

The main objective of this paper is to study the local insensitization of the following functional, referred to as a \emph{sentinel}. For each \(\beta \in [0,1]\), we define

\begin{align}\label{functioPhii1.2}
    \Phi^\beta_{\tau_1,\tau_2}(y, z) 
    := \frac{1-\beta}{2} \,\mathbb{E}\iint_Q \bigl(\chi_{\mathcal{O}_1}|z|^2 + \chi_{\mathcal{O}_2}|\nabla z|^2\bigr) \, dx \, dt
     + \frac{\beta}{2} \,\mathbb{E}\iint_Q \bigl(\chi_{\mathcal{O}_3}|y|^2 + \chi_{\mathcal{O}_4}|\nabla y|^2\bigr) \, dx \, dt,
\end{align}
where the pair \( (y, z) \) is the solution of the system \eqref{ass15} corresponding to the parameters \( \tau_1 \), \( \tau_2 \), and the controls \( (u_i)_{1\leq i\leq4} \). The insensitizing control problem, initially introduced by J.-L. Lions in \cite{lions1989quel}, consists of finding the controls \( u_i \) such that the functional \( \Phi^\beta_{\tau_1,\tau_2} \) remains locally invariant under small perturbations of the initial data \( (0,0) \). More precisely, the insensitizing control problem for the system \eqref{ass15} is formulated as follows: For any \( \xi_1, \xi_2 \in L^2_\mathcal{F}(0,T; L^2(G)) \), a quadruplet of control functions \( (u_i)_{1\leq i\leq4} \) is said to insensitize the functional \( \Phi^\beta_{\tau_1,\tau_2} \) if
\begin{align}\label{inspb}
    \frac{\partial \Phi^\beta_{\tau_1,\tau_2}(y, z)}{\partial \tau_1} \Bigg|_{\tau_1 = \tau_2 = 0} = \frac{\partial \Phi^\beta_{\tau_1,\tau_2}(y, z)}{\partial \tau_2} \Bigg|_{\tau_1 = \tau_2 = 0} = 0,
\end{align}
for all \( \widehat{y}_0, \widehat{z}_0 \in L^2_{\mathcal{F}_0}(\Omega;L^2(G)) \) satisfying the condition \eqref{normmc}.

The weighting parameter $\beta$ in the sentinel functional
\eqref{functioPhii1.2} determines the relative contribution of the two state
components to the insensitizing control problem. This leads to two distinct
regimes. When $\beta\in\{0,1\}$, the sentinel functional involves only one
state component and its spatial gradient. In this case, the observation
information is reduced, resulting in a more delicate coupling structure in
the associated adjoint system and, consequently, requiring a more involved
Carleman estimate, established in Theorem~\ref{carlthm32} for the case
$\beta=0$; the case $\beta=1$ can be treated by analogous arguments. In
contrast, when $\beta\in(0,1)$, both state components and their spatial
gradients enter the sentinel functional. The resulting observation structure
provides additional information and leads to a different Carleman estimate
for the associated adjoint system. Accordingly, we treat separately the two
regimes $\beta\in\{0,1\}$ and $\beta\in(0,1)$.

To ensure the solvability of the insensitizing control problems under
consideration, we impose the following assumptions on the control regions
\(G_i\), \(1\leq i\leq2\), the observation regions
\(\mathcal{O}_i\), \(1\leq i\leq4\), and the coupling coefficients
\(a_{21}\), \(b_{21}\), and \(B_{21}\).

\medskip
\noindent
\textbf{Assumptions.}
The following conditions are assumed to hold.

\begin{description}

\refstepcounter{assumption}
\item[\textnormal{\textbf{(\theassumption)}}]\label{assA1}
The control region \(G_0\) and the observation regions
\(\mathcal{O}_1,\mathcal{O}_2\) satisfy
\begin{equation*}
    G_0\cap\mathcal{O}_1\not\subseteq\mathcal{O}_2.
\end{equation*}

\refstepcounter{assumption}
\item[\textnormal{\textbf{(\theassumption)}}]\label{assA2}
The control regions \(G_0,G_1\) and the observation regions
\(\mathcal{O}_1,\ldots,\mathcal{O}_4\) satisfy
\begin{equation*}
    G_0\cap G_1\cap\mathcal{O}_1\cap\mathcal{O}_3
    \not\subseteq\mathcal{O}_2\cup\mathcal{O}_4.
\end{equation*}

\refstepcounter{assumption}
\item[\textnormal{\textbf{(\theassumption)}}]\label{assA3}
There exist a constant \(a_0>0\) and a nonempty open subset
\(\widetilde G_0\Subset G_0\cap\mathcal{O}_1\) such that
\begin{equation*}
    a_{21}\geq a_0
    \quad\text{or}\quad
    -a_{21}\geq a_0,
    \qquad\text{in }(0,T)\times\widetilde G_0,
    \quad\text{a.s.}
\end{equation*}

\refstepcounter{assumption}
\item[\textnormal{\textbf{(\theassumption)}}]\label{assA4}
The diffusion coupling coefficient \(b_{21}\) vanishes identically:
\begin{equation*}
    b_{21}=0,
    \qquad\text{in }Q,\quad\text{a.s.}
\end{equation*}

\refstepcounter{assumption}
\item[\textnormal{\textbf{(\theassumption)}}]\label{assA5}
The first-order coupling coefficient \(B_{21}\) vanishes identically:
\begin{equation*}
    B_{21}=0,
    \qquad\text{in }Q,\quad\text{a.s.}
\end{equation*}

\end{description}

System \eqref{ass15} is well posed. Indeed, it can be reformulated as the
following abstract forward system
\begin{equation}\label{esuq11.lk}
\begin{cases}
\begin{array}{ll}
d\mathbf{Y} - \mathcal{L}\mathbf{Y} \,dt = [\Xi + A \mathbf{Y}+B\cdot\nabla\textbf{Y} +  u] \,dt + [\widetilde{A}\textbf{Y}+\widetilde{u}] \,dW(t) & \textnormal{in } Q, \\
\mathbf{Y} = 0 & \textnormal{on } \Sigma, \\
\mathbf{Y}(0) = \mathbf{Y}_0 & \textnormal{in } G,
\end{array}
\end{cases}
\end{equation}
where \( \mathbf{Y} = (y, z)^\top \) is the state variable, \( \nabla \mathbf{Y} = (\nabla y, \nabla z)^\top \), \( \mathbf{Y}_0 =(\tau_1 \widehat{y}_0,\tau_2 \widehat{z}_0)^\top\in L^2_{\mathcal{F}_0}(\Omega;L^2(G; \mathbb{R}^2)) \) is the initial state, \( \Xi = (\xi_1, \xi_2)^\top \) is the source term, \( u = (u_1\chi_{G_0}, u_2\chi_{G_1})^\top \) is the control in the drift, \( \widetilde{u}  = (u_3, u_4)^\top \) represents the additional control in the diffusion term, and the coupling matrices

\[ A = (a_{ij})_{1 \leq i, j \leq 2}, \;\,\widetilde{A} = (b_{ij})_{1 \leq i, j \leq 2} \in L_\mathcal{F}^\infty(0, T; L^\infty(G; \mathbb{R}^{2 \times 2})), \]
\[ B = (B_{ij})_{1 \leq i,j \leq 2}  \in L_\mathcal{F}^\infty(0, T; L^\infty(G; \mathbb{R}^{2N \times 2})).\]  
The operator \( \mathcal{L} = \text{diag}(\mathcal{L}_1, \mathcal{L}_2) \), where \( \mathcal{L}_m \) (\( m = 1,2 \)) are the self-adjoint second-order operators defined in \eqref{definofopeLi}. By standard well-posedness results for linear stochastic parabolic equations
(see, e.g., \cite{krylov1994}), for any \( \Xi \in L^2_\mathcal{F}(0,T; L^2(G; \mathbb{R}^2)) \), \( \mathbf{Y}_0 \in L^2_{\mathcal{F}_0}(\Omega;L^2(G; \mathbb{R}^2)) \), and \( (u,\widetilde{u}) \in \mathcal{U} \), the system \eqref{esuq11.lk} admits a unique weak solution
\[
\mathbf{Y} = (y, z)^\top \in L^2_\mathcal{F}(\Omega; C([0,T]; L^2(G; \mathbb{R}^2))) \bigcap L^2_\mathcal{F}(0,T; H^1_0(G; \mathbb{R}^2)).
\]
Moreover, there exists a constant \( C > 0 \) such that
\[
\begin{aligned}
    & \| \mathbf{Y} \|_{L^2_\mathcal{F}(\Omega; C([0,T]; L^2(G; \mathbb{R}^2)))} + \| \mathbf{Y} \|_{L^2_\mathcal{F}(0,T; H^1_0(G; \mathbb{R}^2))} \\
    & \leq C \Big( \| \mathbf{Y}_0 \|_{L^2_{\mathcal{F}_0}(\Omega;L^2(G; \mathbb{R}^2))} + \|\Xi\|_{L^2_\mathcal{F}(0,T; L^2(G; \mathbb{R}^2))}+ \|(u,\widetilde{u})\|_{\mathcal{U}} \Big).
\end{aligned}
\]

The main result of this paper concerns the existence of insensitizing controls for
system~\eqref{ass15} in the sense of~\eqref{inspb}. Depending on the value of the
parameter $\beta$, we distinguish between the following two cases:
\begin{enumerate}[label=(\roman*)]
\item\label{case:i} $\beta = 0$ or $\beta = 1$;
\item\label{case:ii} $\beta \in (0,1)$.
\end{enumerate}
For case~\ref{case:i}, we present the analysis for $\beta=0$ only; by the symmetry
between the variables $y$ and $z$, the case $\beta=1$ can be handled by analogous
arguments and is therefore omitted. We now state the first main result of the paper.
\begin{thm}[Case $\beta = 0$]\label{thmm1.3ins}
Assume that \eqref{assA1}, \eqref{assA3}, \eqref{assA4}, and \eqref{assA5} hold. Then there exist constants $M>0$ and $C>0$, depending only on
$G$, $G_0$, $\mathcal{O}_1$, $\mathcal{O}_2$, $T$, $a_0$, $a_{ij}$, $b_{ij}$,
and $B_{ij}$, such that for any $\xi_1,\xi_2\in L^2_{\mathcal F}(0,T;L^2(G))$
satisfying
\begin{equation}\label{assonx1xi2ins}
\mathbb{E}\iint_Q \exp(Mt^{-1})\bigl(|\xi_1|^2+|\xi_2|^2\bigr)\,dx\,dt<\infty,
\end{equation}
there exists a control triple $(u_1,u_3,u_4)\in\mathcal{U}_0$, with $u_2\equiv0$,
that insensitizes the functional $\Phi^0_{\tau_1,\tau_2}$ in the sense
of \eqref{inspb}. Moreover, these controls satisfy the estimate
\begin{equation}\label{estthmm1.3ins}
\|(u_1,u_3,u_4)\|^2_{\mathcal{U}_0}
\leq C\,\mathbb{E}\iint_Q \exp(Mt^{-1})\bigl(|\xi_1|^2+|\xi_2|^2\bigr)\,dx\,dt.
\end{equation}
\end{thm}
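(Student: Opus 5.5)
The plan follows the classical scheme of Bodart--Fabre and de Teresa, adapted to the stochastic setting: reduce insensitization to partial null controllability of an extended forward--backward system, prove that by duality, and obtain the observability inequality from the Carleman estimate announced in Theorem~\ref{carlthm32}.

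\textbf{Step 1 (reformulation).} Differentiate $\Phi^0_{\tau_1,\tau_2}$ with respect to $\tau_1,\tau_2$ at $\tau_1=\tau_2=0$. Write $(y,z)$ for the solution of \eqref{ass15} with null initial data and $(y_{\tau},z_{\tau})$ for the derivatives of the state, which solve the homogeneous linearized system with initial data $(\widehat y_0,0)$ or $(0,\widehat z_0)$. Then
\begin{equation*}
\partial_{\tau_i}\Phi^0\big|_{0}=\mathbb{E}\iint_Q\bigl(\chi_{\mathcal O_1}z\,z_\tau+\chi_{\mathcal O_2}\nabla z\cdot\nabla z_\tau\bigr)\,dx\,dt .
\end{equation*}
Introduce the backward stochastic system adjoint to the linearized one,
\begin{equation*}
dp+\mathcal L_1^{*}p\,dt=\bigl[-a_{11}p-a_{21}q+\nabla\!\cdot(B_{11}p)-b_{11}P-b_{21}Q_1\bigr]dt+P\,dW,
\end{equation*}
\begin{equation*}
dq+\mathcal L_2^{*}q\,dt=\bigl[\dots+\chi_{\mathcal O_1}z-\nabla\!\cdot(\chi_{\mathcal O_2}\nabla z)\bigr]dt+Q_1\,dW,
\end{equation*}
with $p(T)=q(T)=0$, where $B_{21}=0$ and $b_{21}=0$ by \eqref{assA4}--\eqref{assA5}. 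Itô's formula shows that \eqref{inspb} holds for all admissible $(\widehat y_0,\widehat z_0)$ if and only if $p(0)=q(0)=0$ a.s. So the task becomes the null controllability at time $0$ of the coupled forward ($y,z$)--backward ($p,q$) system. It has a zeroth-order coupling $\chi_{\mathcal O_1}z$ and a second-order coupling $-\nabla\cdot(\chi_{\mathcal O_2}\nabla z)$, and is controlled only by $u_1\chi_{G_0}$ and by $u_3,u_4$ in the diffusion.

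\textbf{Step 2 (duality).} Via a penalized optimal control / Hilbert Uniqueness argument, as in the stochastic null controllability literature, the controllability reduces to an observability inequality for the adjoint system. The adjoint is a forward--backward system: a backward pair $(\phi,\psi)$ driven by a forward pair $(\eta,\zeta)$ with initial data $(\eta_0,\zeta_0)$, coupled through $\chi_{\mathcal O_1}$ and $\nabla\cdot(\chi_{\mathcal O_2}\nabla\,\cdot)$. The inequality to prove has the form
\begin{equation*}
\mathbb{E}\iint_Q e^{-Mt^{-1}}(|\phi|^2+|\psi|^2)\,dx\,dt\le C\Bigl(\mathbb{E}\iint_{(0,T)\times G_0}|\phi|^2\,dx\,dt+\mathbb{E}\iint_Q(|\Phi|^2+|\Psi|^2)\,dx\,dt\Bigr),
\end{equation*}
where $\Phi,\Psi$ are the diffusion components of the backward adjoint equations, which the diffusion controls $u_3,u_4$ observe. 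By Cauchy--Schwarz, the source terms then give $\mathbb{E}\iint e^{Mt^{-1}}(|\xi_1|^2+|\xi_2|^2)$ on the right. Minimizing the standard quadratic functional, or using a Fenchel--Rockafellar argument, produces $(u_1,u_3,u_4)$ with the bound \eqref{estthmm1.3ins}; the weight $e^{Mt^{-1}}$ in \eqref{assonx1xi2ins} matches the degeneracy of the Carleman weights near $t=0$.

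\textbf{Step 3 (observability from Carleman).} I would invoke Theorem~\ref{carlthm32}, the global Carleman estimate for the coupled system with zeroth- and second-order couplings. \eqref{assA1} supplies a ball $\mathcal B\Subset G_0\cap\mathcal O_1\setminus\overline{\mathcal O_2}$, shrunk so that it also lies in $\widetilde G_0$ from \eqref{assA3}. On $\mathcal B$ the second-order coupling vanishes while the zeroth-order one is active. The Carleman estimate first bounds both components by a local term in the second component on $\mathcal B$. That local term is then eliminated using the equation for the first component: there the coupling $a_{21}$ is invertible by \eqref{assA3}, so $\zeta$ can be expressed through $\phi$ with a cutoff. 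This requires integrating by parts in space and using Itô's formula in time, which is where $b_{21}=B_{21}=0$ is needed, since otherwise uncontrollable diffusion and gradient terms appear. The higher-order terms are absorbed by the large Carleman parameters. Finally, energy estimates for the backward equations on $(T/2,T)$ remove the weights, giving the inequality of Step 2.

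\textbf{Main obstacle.} The hardest step is the Carleman estimate itself. The second-order coupling $\nabla\cdot(\chi_{\mathcal O_2}\nabla\,\cdot)$ has a discontinuous coefficient, which must be handled by treating it as a source in $H^{-1}$, through a Carleman estimate with $H^{-1}$ right-hand side (a duality / Imanuvilov--Puel type estimate for the stochastic heat equation). The local elimination on $\mathcal B$ must also avoid producing non-absorbable stochastic terms. I would take Theorem~\ref{carlthm32} as given and concentrate the proof on Steps~1--2.
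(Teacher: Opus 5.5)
Your proposal follows the paper's proof: Proposition~\ref{proposs11} reduces insensitivity to $p(0)=r(0)=0$, and duality reduces this to the observability inequality of Proposition~\ref{propo5.1obseine}, which in turn comes from Theorem~\ref{carlthm32} plus energy estimates (the paper carries out the duality by extending $(h\chi_{G_0},H,K)\mapsto-\mathbb{E}\iint_Q(h\xi_1+k\xi_2)\,dx\,dt$ via Hahn--Banach and representing the extension by Riesz, which is interchangeable with your penalization or Fenchel--Rockafellar variant). The one step to tighten is the last one: the backward energy estimate alone only gives $\mathbb{E}\int_{T/2}^T\int_G(h^2+k^2)\,dx\,dt\le C\,\mathbb{E}\int_{T/2}^T\int_G(v^2+|\nabla v|^2)\,dx\,dt$, where $(q,v)$ is the forward adjoint pair (your $(\eta,\zeta)$), and there the Carleman weights degenerate as $t\to T$; you therefore also need the forward energy estimate for $(q,v)$, using its dissipation term to control $\nabla v$, to shift this bound to $(T/4,3T/4)$, where $\theta^2\gamma^3$ is bounded below, exactly as in the proof of Proposition~\ref{propo5.1obseine}.
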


We next state our second main result.

\begin{thm}[Case $\beta\in(0,1)$]\label{thmm1.3ins3}
Assume that \eqref{assA2} holds. Then there exist constants $M>0$ and $C>0$,
depending only on $G$, $G_i$, $\mathcal{O}_i$, $T$, $a_{ij}$, $b_{ij}$, and
$B_{ij}$, such that for any $\xi_1,\xi_2\in L^2_{\mathcal F}(0,T;L^2(G))$
satisfying
\begin{equation}\label{assonx1xi2ins3}
\mathbb{E}\iint_Q \exp(Mt^{-1})\bigl(|\xi_1|^2+|\xi_2|^2\bigr)\,dx\,dt<\infty,
\end{equation}
there exist controls $(u_1,u_2,u_3,u_4)\in\mathcal{U}$ that insensitize the
functional $\Phi^\beta_{\tau_1,\tau_2}$ in the sense of \eqref{inspb}.
Moreover, these controls satisfy the estimate
\begin{equation}\label{estthmm1.3ins3}
\|(u_1,u_2,u_3,u_4)\|^2_{\mathcal{U}}
\leq C\,\mathbb{E}\iint_Q \exp(Mt^{-1})\bigl(|\xi_1|^2+|\xi_2|^2\bigr)\,dx\,dt.
\end{equation}
\end{thm}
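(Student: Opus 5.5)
We treat Theorem~\ref{thmm1.3ins3} with the standard three-step scheme: reformulation as a null controllability problem, duality, and a Carleman estimate for the adjoint system.

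\emph{Step 1: reformulation.} Differentiate $\Phi^\beta_{\tau_1,\tau_2}$ at $\tau_1=\tau_2=0$. Since $(y,z)$ is affine in $(\tau_1,\tau_2)$, the derivatives $y_{\tau_i}, z_{\tau_i}$ solve the homogeneous version of \eqref{ass15} (no $\xi$, no controls) with initial data $\widehat y_0$ or $\widehat z_0$. Introduce the backward adjoint pair $(p,P),(q,Q)$ solving
\begin{equation*}
\begin{cases}
dp+\mathcal{L}_1 p\,dt = \bigl[-a_{11}p-a_{21}q+\nabla\cdot(B_{11}p)+\nabla\cdot(B_{21}q)-b_{11}P-b_{21}Q-\beta\chi_{\mathcal O_3}y+\beta\nabla\cdot(\chi_{\mathcal O_4}\nabla y)\bigr]dt+P\,dW(t),\\
dq+\mathcal{L}_2 q\,dt = \bigl[-a_{12}p-a_{22}q+\nabla\cdot(B_{12}p)+\nabla\cdot(B_{22}q)-b_{12}P-b_{22}Q-(1-\beta)\chi_{\mathcal O_1}z+(1-\beta)\nabla\cdot(\chi_{\mathcal O_2}\nabla z)\bigr]dt+Q\,dW(t),
\end{cases}
\end{equation*}
with $p=q=0$ on $\Sigma$ and $p(T)=q(T)=0$, where $(y,z)$ is the state with $\tau_1=\tau_2=0$. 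Applying Itô's formula to $\langle y_{\tau_1},p\rangle+\langle z_{\tau_1},q\rangle$ gives
\[
\partial_{\tau_1}\Phi\big|_0=\mathbb E\langle \widehat y_0,p(0)\rangle,
\]
and similarly for $\tau_2$. Condition \eqref{inspb} for all admissible data is therefore equivalent to $p(0)=q(0)=0$ a.s. We thus obtain a null controllability problem for the forward--backward system $(y,z,p,q)$, with controls $u_1\chi_{G_0},u_2\chi_{G_1}$ in the forward drift and $u_3,u_4$ in the forward diffusion. The couplings from forward to backward are of order zero ($\chi_{\mathcal O_1},\chi_{\mathcal O_3}$) and of order two ($\nabla\cdot(\chi_{\mathcal O_2}\nabla\cdot)$, $\nabla\cdot(\chi_{\mathcal O_4}\nabla\cdot)$). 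We may need to smooth the characteristic functions, or interpret them weakly; to keep the second-order coupling regular, we take a subset where $\chi_{\mathcal O_2}=\chi_{\mathcal O_4}=0$.

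\emph{Step 2: duality.} By a standard penalized HUM argument (minimize $\frac12\|u\|^2+\frac1{2\varepsilon}\mathbb E\|(p,q)(0)\|^2$ with the weight $e^{Mt^{-1}}$ handling $\xi$), it suffices to prove an observability inequality for the adjoint forward--backward system. This system consists of a forward system in $(\phi,\psi)$ with zero initial data, driven by sources $\beta\chi_{\mathcal O_3}\eta-\beta\nabla\cdot(\chi_{\mathcal O_4}\nabla\eta)$ and the analogue in $\zeta$, together with a backward system in $(\eta,\zeta,\widehat\eta,\widehat\zeta)$ with terminal data $(\eta_T,\zeta_T)$. The inequality to establish is
\[
\mathbb E\iint_Q e^{-Mt^{-1}}(|\phi|^2+|\psi|^2)+\ldots\le C\,\mathbb E\iint_{(0,T)\times G_0}|\eta|^2+C\,\mathbb E\iint_{(0,T)\times G_1}|\zeta|^2+C\,\mathbb E\iint_Q(|\widehat\eta|^2+|\widehat\zeta|^2).
\]
The diffusion controls absorb the correction terms $\widehat\eta,\widehat\zeta$. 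The estimate on $(u_1,\dots,u_4)$ in \eqref{estthmm1.3ins3} then follows from Cauchy--Schwarz against the $\xi$-terms.

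\emph{Step 3: Carleman estimate, the main obstacle.} Using \eqref{assA2}, choose a ball $\omega\Subset G_0\cap G_1\cap\mathcal O_1\cap\mathcal O_3$ disjoint from $\overline{\mathcal O_2\cup\mathcal O_4}$. On $\omega$ the second-order couplings vanish and the zeroth-order couplings equal $\beta$ and $1-\beta$, both nonzero. The plan is as follows.
\begin{enumerate}[(a)]
\item Apply the known forward and backward stochastic parabolic Carleman estimates, with the Fursikov--Imanuvilov weight built from $\psi$ whose critical points lie outside $\omega$, to each of the four equations. The second-order source $\nabla\cdot(\chi\nabla\eta)$ is handled in a weaker $H^{-1}$-type Carleman estimate for the forward equations (Imanuvilov--Yamamoto / Liu type), which costs one power of $\lambda\mu\varphi$ and is absorbed by the $\nabla\eta$ term on the left of the backward estimate.
\item This leaves local terms in $\phi,\psi$ on $\omega$. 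Since $\chi_{\mathcal O_3}\beta\eta=d\phi+\ldots$ on $\omega$, we multiply by $\theta^2\zeta_\omega\phi$ and apply Itô's formula to convert the local $\phi$ term into local $\eta$ terms plus higher-order absorbable terms, and likewise $\psi$ into $\zeta$. Here $\beta\in(0,1)$ is used so that both components are observed.
\end{enumerate}
The delicate points are the Itô cross terms generated by the diffusion parts, which are controlled by $\widehat\eta,\widehat\zeta$ over all of $G$, and the matching of the weight powers. Finally, we remove the weights on $(T/2,T)$ by energy estimates, which yields the $e^{-Mt^{-1}}$ weight on the left.
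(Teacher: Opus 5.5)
Your Step~1 matches the paper's Proposition~\ref{proposs11}. However, the adjoint system in Step~2 has the cascade running the wrong way, and this is a genuine gap. In the controlled system the forward state $(y,z)$ drives the backward pair $(p,r)$ through the observation operator. The target is $p(0)=r(0)=0$, while $y(T),z(T)$ are unconstrained.

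Computing $d(yh+zk)$ and $d(pq+rv)$ gives the correct dual structure, which is \eqref{adback4.77}--\eqref{adjoforr4.8}:
\begin{itemize}
\item the dual of $(p,r)$ is a forward system $(q,v)$, coupled to nothing, carrying the free data $(q_0,v_0)$ at $t=0$, which pairs with $(p(0),r(0))$;
\item the dual of $(y,z)$ is a backward system $(h,k;H,K)$ with \emph{zero} terminal data, driven by $-\beta(\chi_{\mathcal O_3}q-\nabla\cdot(\chi_{\mathcal O_4}\nabla q))$ and $-(1-\beta)(\chi_{\mathcal O_1}v-\nabla\cdot(\chi_{\mathcal O_2}\nabla v))$.
\end{itemize}
You instead give the backward system free terminal data $(\eta_T,\zeta_T)$, which would be dual to $(y(T),z(T))$, and let it drive a forward system with zero initial data. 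Nothing in your adjoint is dual to $(p(0),r(0))$, so your inequality is not equivalent to \eqref{nullcontroprop}.

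The left-hand side is also wrong. It must contain the variables paired with $\xi_1,\xi_2$, i.e.\ the duals of $(y,z)$. What is needed is $\mathbb{E}\iint_Q e^{-Mt^{-1}}(h^2+k^2)\,dx\,dt\le C\,\mathbb{E}\iint_Q(h^2\chi_{G_0}+k^2\chi_{G_1}+H^2+K^2)\,dx\,dt$. With $\phi,\psi$ on the left, your Cauchy--Schwarz step against the $\xi$-terms has nothing to act on. The zero terminal data of $(h,k)$ is also what makes the final energy argument on $(T/2,T)$ work. There $h,k$ are bounded by $v,\nabla v$, and these in turn by $q,v$ on $(T/4,3T/4)$ through forward energy estimates.

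The reversal also invalidates Step~3(b). A local $L^2$ term of one unknown can only be traded for local terms of another through an equation in which the first appears as a zeroth-order source. In the correct system, $q$ and $v$ enter the $h$- and $k$-equations as $-\beta\chi_{\mathcal O_3}q$ and $-(1-\beta)\chi_{\mathcal O_1}v$. So on a set $\widetilde G_0\subset G_0\cap G_1\cap\mathcal O_1\cap\mathcal O_3$ where the gradient couplings vanish, Itô's formula for $d[w_3\zeta_1(hq+kv)]$, with $w_l=\theta^2(\lambda\gamma)^l$ and cut-off $\zeta_1$, gives $\beta\,\mathbb{E}\iint_Q w_3\zeta_1q^2+(1-\beta)\,\mathbb{E}\iint_Q w_3\zeta_1v^2$ in terms of $\varepsilon(\mathcal I(3,q)+\mathcal I(3,v))$ plus local terms in $h,k,\nabla h,\nabla k$. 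This is exactly where both $\beta>0$ and $1-\beta>0$ are used. In your system $\phi$ never appears in the $\eta$-equation. Multiplying $\phi$'s own equation by $\theta^2\zeta_\omega\phi$ only yields a localized energy identity for $\phi$, which cannot bound $\iint\theta^2\zeta_\omega\phi^2$ by $\eta$.

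Once the structure is corrected, your plan matches the paper's, with two additions:
\begin{itemize}
\item \emph{Offset weight exponents.} Use Lemma~\ref{thm3.3} with $d=3$ for the uncoupled $(q,v)$ and Lemma~\ref{thmm3.1cab} with $d=2$ for $(h,k)$. Then the divergence-form source costs $\lambda\theta^2\gamma|\nabla q|^2$, which is exactly the gradient part of $\mathcal I(3,q)$. It is handled by substituting the forward estimate, not by absorbing it for large $\lambda$.
\item \emph{Local gradient terms.} The local $\nabla h,\nabla k$ terms produced by the transfer are not absorbable. They must be removed, e.g.\ via the local energy identity for $d[w_5\zeta_2(h^2+k^2)]$. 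The price is $\lambda^{12}$-weighted local terms in $h,k$ and $\lambda^5$-weighted global terms in $H,K$.
\end{itemize}
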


Insensitizing control problems aim to design controls that make a prescribed functional of the system state insensitive, at first order, to small perturbations of uncertain data, such as the initial or boundary conditions. Since their introduction by J.-L. Lions \cite{lions1989quel}, these problems
have been extensively studied in the deterministic setting and have become an important class of control problems for partial differential equations.

In the deterministic framework, insensitizing controls have been investigated for a wide variety of parabolic equations with static boundary conditions (see, e.g., \cite{bodafabre95,BodarBurgosPerez2004,BodarBurgosPerez04NonAnalysis,BodaGnPer,Tereza97Esaim}) and, more recently, for equations with dynamic boundary conditions \cite{bouetman25,zhanyingaodbc19}. These studies have also shown that the existence of insensitizing controls may depend essentially on the choice of the initial data; see, for instance, \cite{Tereza2000,Tereidenfication}. Extensions to sentinel functionals involving spatial gradients of the state have been considered in \cite{gureSiam07,msacar26}. The theory has subsequently been extended to various nonlinear, higher-order, dispersive, and coupled systems, including the nonlinear Schrödinger equation \cite{capfiltanka20}, the Hirota--Satsuma system \cite{bhann24}, fourth-order parabolic equations \cite{kass20}, the Ginzburg--Landau equation \cite{santa19}, the Kawahara equation \cite{kumamaj25}, and several coupled deterministic systems \cite{ghadsanta24,calcarcerpa16,Djomkenn25}. Numerical aspects have also been investigated for semilinear parabolic equations; see \cite{boyhater19}.

The stochastic theory of insensitizing controls is considerably less
developed. To the best of our knowledge, existing results have mainly
concerned single stochastic equations, including backward stochastic heat
equations \cite{liu2014global}, stochastic degenerate parabolic equations
\cite{liu2019carleman}, forward stochastic heat equations
\cite{yansun2011}, the stochastic Kuramoto--Sivashinsky equation
\cite{luliu25}, and stochastic parabolic equations with dynamic boundary
conditions \cite{elgrou1D23insensi24}. At the same time, the controllability
theory for stochastic parabolic equations has undergone substantial
development, including results for coupled systems
\cite{elgconvec23,Preprielgr24jmaa,
liu14couplfor,LiuuLiuX,liu2019carleman}; see also the monograph
\cite{lu2021mathematical}. These developments provide important tools for
the study of more general stochastic control problems, including
multi-objective problems in the Stackelberg--Nash framework
\cite{elgomar26,omboelman25sec,yuzhang23t}.

Despite these developments, several important gaps remain in the theory of
insensitizing controls. First, the stochastic theory is still relatively
limited, with most existing results concerning single stochastic equations.
Second, insensitizing control problems for genuinely coupled stochastic
parabolic systems remain largely undeveloped, despite the additional
difficulties caused by the coupling terms in the associated adjoint systems.
Third, the inclusion of spatial gradient observations introduces further
analytical difficulties and generally requires stronger geometric conditions
on the control and observation regions. Even in the deterministic parabolic setting, gradient observations require
more delicate Carleman estimates and stronger geometric conditions than state
observations alone; see Remark~\ref{rmk1.1h} for further details.

The present paper addresses these three issues simultaneously. We study
insensitizing control problems for a class of coupled linear stochastic
parabolic systems governed by two forward stochastic
reaction--convection--diffusion equations, with sentinel functionals
involving both localized observations of the state variables and their
spatial gradients. The associated adjoint system contains zeroth- and
second-order coupling terms generated by the observation operator. Our main
analytical contribution is the derivation of new global Carleman estimates
for such coupled stochastic parabolic systems. These estimates yield the
corresponding observability inequalities and, through a duality argument,
the existence of insensitizing controls.

Carleman estimates constitute one of the fundamental analytical tools in the
study of control problems for parabolic equations, with applications to uniqueness,
observability, inverse problems, controllability, and insensitizing control.
They were introduced by Carleman \cite{Carl39} in the study of uniqueness
for second-order elliptic equations and have since been developed extensively
in both deterministic and stochastic settings. For general references on
Carleman estimates and their applications, we refer the reader to
\cite{surveyAmmarKBGT,coron07,fernandez2006global,fursikov1996controllability,
luZhang22mcrf,tang2009null}.

In Theorem~\ref{thmm1.3ins}, assumption \eqref{assA3} on the coupling coefficient \(a_{21}\) allows us to achieve insensitization with only three controls, namely \(u_1\), \(u_3\), and \(u_4\). Indeed, the term
\(a_{21}y\) provides an indirect coupling mechanism through which the control
\(u_1\), acting on the first equation, influences the second state component.
In contrast, eliminating the additional controls \(u_3\) and \(u_4\), which
act in the diffusion terms, remains a challenging open problem. This
difficulty is already present in the classical null controllability theory
for forward stochastic parabolic equations; see, e.g.,
\cite{elgconvec23,tang2009null}. For related results on partial
controllability with a single control for stochastic heat equations, we
refer to \cite{lu2011some,observineqback}. Controllability
results for coupled stochastic parabolic systems with two equations can be
found in \cite{liu14couplfor,LiuuLiuX}. Finally, assumptions
\eqref{assA4} and \eqref{assA5} play an essential role in the derivation of
the Carleman estimate in Theorem~\ref{carlthm32}; see
Remark~\ref{rkk15.5} for further discussion.

Now, some remarks are in order.
\begin{rmk}
The assumption that the coefficients $\sigma_{ij}^m$ have $W^{2,\infty}$-regularity with respect to the spatial variable may appear rather strong. In \cite{tang2009null}, Carleman estimates for forward and backward stochastic parabolic equations were established using an exponential weighted energy identity (see Theorem 3.1), under the assumption of $W^{2,\infty}$-regularity of the coefficients $b^{ij}$. On the other hand, in \cite{liu2014global}, it was shown, by means of a duality argument, that $W^{1,\infty}$-regularity is sufficient for Carleman estimates for forward stochastic parabolic equations (see Remark 1.2). To the best of our knowledge, whether the $W^{2,\infty}$-regularity assumption can be weakened to $W^{1,\infty}$ for backward stochastic parabolic equations remains an unresolved problem. Since our analysis requires Carleman estimates for both forward and backward equations, we assume that the coefficients $\sigma_{ij}^m$ have $W^{2,\infty}$-regularity with respect to the spatial variable.
\end{rmk}

\begin{rmk}\label{rmk1.1h}
In this work, we investigate insensitizing control problems associated with
the functional \eqref{functioPhii1.2}, which involves both state and
gradient observations. In the absence of gradient observations (i.e., when
$\mathcal{O}_2=\mathcal{O}_4=\emptyset$), assumptions \eqref{assA1} and \eqref{assA2} reduce to
\[
G_0\cap\mathcal{O}_1\neq\emptyset
\quad\text{and}\quad
G_0\cap G_1\cap\mathcal{O}_1\cap\mathcal{O}_3\neq\emptyset,
\]
respectively. These are the standard geometric conditions in the literature
on insensitizing controls with state observations. The presence of gradient
observations, however, requires the stronger geometric assumptions
\eqref{assA1} and \eqref{assA2}, which are crucial for establishing
the main Carleman estimates in Theorems~\ref{carlthm32} and
\ref{carlthm3233.4}, respectively. Whether these additional geometric
assumptions can be removed remains an open problem, as doing so would
entail substantial technical difficulties; see \cite{gureSiam07} for the
corresponding deterministic parabolic setting.
\end{rmk}

\begin{rmk}\label{rkk15.5}
The assumptions \eqref{assA4} and \eqref{assA5} play an essential role in
the derivation of the Carleman estimate \eqref{carlestcasca} for the case
$\beta=0$. Indeed, these assumptions allow us to eliminate several
problematic terms arising in the proof, which cannot otherwise be absorbed
into the left-hand side by means of Young's inequality. More precisely, the
problematic terms arising in Steps~2, 3, and~6 of the proof are

\begin{align*} &\text{Step 2:}\qquad \mathbb{E}\iint_Q w_3\zeta_1 q\, B_{21}\cdot\nabla q\,dx\,dt,\qquad \mathbb{E}\iint_Q w_3\zeta_1 b_{11}b_{21} q^2 \,dx\,dt, \\ &\text{Step 3:}\qquad \mathbb{E}\iint_Q w_5\zeta_2 b_{21}^2 q^2 \,dx\,dt, \\ &\text{Step 6:}\qquad \mathbb{E}\iint_Q k\, B_{21}\cdot\nabla(w_{44}\zeta_5 k)\,dx\,dt. \end{align*}
Furthermore, the method developed in this paper does not extend to the inclusion of first-order terms in the diffusion parts of system \eqref{ass15}. The main obstacle appears when applying the Carleman estimate \eqref{carfor5.6new} to the corresponding extended equation associated with \eqref{adjoforr4.8}. In this situation, one would need to absorb a term of the form
\[ \lambda^{d-1}\mathbb{E}\iint_Q \theta^2\gamma^{d-1}|\nabla z|^2\,dx\,dt \] 
on the right-hand side by the left-hand side term
\[ \lambda^{d-2}\mathbb{E}\iint_Q \theta^2\gamma^{d-2}|\nabla z|^2\,dx\,dt, \]
which is impossible since $d-2<d-1$. This explains the restrictions imposed in this paper on the diffusion terms in \eqref{ass15}.
\end{rmk}

\begin{rmk}
Natural extensions of the sentinel functional \eqref{functioPhii1.2} can be obtained by introducing coupling terms between the two state components. For instance, let $\mathcal{O}_5,\mathcal{O}_6\subset G$ be nonempty open subsets and consider
the following additional terms in \eqref{functioPhii1.2}:
\[
\gamma\chi_{\mathcal O_5}yz
+\eta\chi_{\mathcal O_6}\nabla y\cdot\nabla z,
\]
or
\[
\gamma\chi_{\mathcal O_5}|y-z|^2
+\eta\chi_{\mathcal O_6}|\nabla y-\nabla z|^2,
\qquad \gamma,\eta\in\mathbb{R}.
\]
The first type of terms measures the local interaction between the two
components, while the second measures their local mismatch. Such
functionals are relevant when the relative behavior of the coupled states
is of interest. From an analytical viewpoint, these terms introduce
additional zeroth- and second-order cross-couplings into the associated
adjoint system, leading to non-diagonal observation operators and more
delicate Carleman estimates. The corresponding insensitizing control
problems are left for future investigation.
\end{rmk}

The paper is organized as follows: In Section~\ref{sec2}, we reformulate the insensitizing control problem~\eqref{inspb} as a null controllability problem for a cascade system of coupled forward--backward stochastic parabolic equations. Section~\ref{sec3} is devoted to the derivation of the main Carleman estimates for the associated adjoint backward--forward stochastic parabolic system, distinguishing between the cases $\beta=0$ and $\beta\in(0,1)$. In Section~\ref{sec4}, we establish the corresponding observability inequalities. Finally, Section~\ref{sec5} presents the proof of the main insensitizing control results, namely Theorems~\ref{thmm1.3ins} and~\ref{thmm1.3ins3}.

\section{From Insensitizing Control to Null Controllability}\label{sec2}
In this section, we reformulate the insensitizing control problem~\eqref{inspb}
for any $\beta \in[0,1]$ as a null controllability problem for a suitable  coupled cascade 
forward--backward stochastic parabolic system.

\begin{prop}\label{proposs11}
Let $\xi_1, \xi_2 \in L^2_\mathcal{F}(0,T;L^2(G))$ and let $((y, z); (p, r; P, R))$ be the solution of the following systems of linear forward-backward stochastic parabolic equations associated with the control quadruple  $(u_1, u_2, u_3,u_4) \in \mathcal{U}$:
\begin{equation}\label{forr4.1}
\begin{cases}
\begin{array}{ll}
dy - \mathcal{L}_1 y \, dt = \Big[\xi_1 + a_{11} y + a_{12} z + B_{11}\cdot\nabla y + B_{12}\cdot\nabla z + u_1 \chi_{G_0}\Big] \, dt\\
\hspace{2.5cm}+ \Big[ b_{11} y + b_{12} z +u_3\Big] \, dW(t) & \textnormal{in } Q, \\[0.8em]
dz - \mathcal{L}_2 z \, dt = \Big[\xi_2 + a_{21} y + a_{22} z+ B_{21}\cdot\nabla y+ B_{22}\cdot\nabla z + u_2 \chi_{G_1}\Big] \, dt\\
\hspace{2.5cm}+ \Big[b_{21} y + b_{22} z+u_4\Big] \, dW(t) & \textnormal{in } Q, \\[0.8em]
y = z = 0 & \textnormal{on } \Sigma, \\[0.8em]
y(0,\cdot) = z(0,\cdot) = 0& \textnormal{in } G,
\end{array}
\end{cases}
\end{equation}
and 
\begin{equation}\label{back45}
\begin{cases}
\begin{array}{ll}
dp + \mathcal{L}_1 p \, dt = \Big[-a_{11} p - a_{21} r-b_{11} P - b_{21} R +\nabla\cdot(pB_{11}+rB_{21})\\
\hspace{2,5cm}-\beta( \chi_{\mathcal{O}_3} y-\nabla\cdot(\chi_{\mathcal{O}_4}\nabla y))\Big] \, dt + P \, dW(t) & \textnormal{in } Q, \\[0.8em]
dr + \mathcal{L}_2 r \, dt = \Big[-a_{12} p - a_{22} r-b_{12} P - b_{22} R+\nabla\cdot(pB_{12}+rB_{22}) \\
\hspace{2.5cm}- (1-\beta)(\chi_{\mathcal{O}_1} z-\nabla\cdot(\chi_{\mathcal{O}_2}\nabla z))\Big] \, dt + R \, dW(t) & \textnormal{in } Q, \\[0.8em]
p = r = 0 & \textnormal{on } \Sigma, \\[0.8em]
p(T,\cdot) = r(T,\cdot) = 0 & \textnormal{in } G.
\end{array}
\end{cases}
\end{equation}
Then, the insensitizing control problem \eqref{inspb} holds for the controls $(u_1, u_2, u_3,u_4)$ if and only if the solution $((y, z); (p, r; P, R))$ of systems \eqref{forr4.1}–\eqref{back45} satisfies that
\begin{align}\label{nullcontroprop}
p(0,\cdot) = r(0,\cdot) = 0 \quad \textnormal{in } G, \quad \textnormal{a.s.}
\end{align}
\end{prop}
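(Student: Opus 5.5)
By linearity, the solution of \eqref{ass15} splits as $(y,z)=(y^{\tau},z^{\tau})$ with $y^{\tau}=y+\tau_1 y_1+\tau_2 y_2$ and $z^{\tau}=z+\tau_1 z_1+\tau_2 z_2$. Here $(y,z)$ solves \eqref{forr4.1} (null initial data, with sources and controls). The pair $(y_1,z_1)$ solves the homogeneous system (no $\xi$, no controls) with initial data $(\widehat y_0,0)$, and $(y_2,z_2)$ solves it with initial data $(0,\widehat z_0)$. Well-posedness and the a priori estimate make this decomposition legitimate.

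Since $\Phi^\beta_{\tau_1,\tau_2}$ is quadratic in $(y^\tau,z^\tau)$, differentiation at $\tau_1=\tau_2=0$ gives
\[
\frac{\partial \Phi^\beta}{\partial \tau_1}\Big|_{0}=(1-\beta)\,\mathbb{E}\iint_Q\bigl(\chi_{\mathcal O_1}z z_1+\chi_{\mathcal O_2}\nabla z\cdot\nabla z_1\bigr)+\beta\,\mathbb{E}\iint_Q\bigl(\chi_{\mathcal O_3}y y_1+\chi_{\mathcal O_4}\nabla y\cdot\nabla y_1\bigr),
\]
and the analogous formula holds for $\tau_2$ with $(y_2,z_2)$.

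Next apply Itô's formula to $\langle p,y_1\rangle_{L^2(G)}+\langle r,z_1\rangle_{L^2(G)}$ on $[0,T]$, where $(p,r;P,R)$ solves \eqref{back45}. The backward system is well posed by standard backward stochastic parabolic theory. Its right-hand side contains the divergence terms $\nabla\cdot(\chi_{\mathcal O_2}\nabla z)\in L^2_{\mathcal F}(0,T;H^{-1}(G))$, so the solution lives in $L^2_{\mathcal F}(\Omega;C([0,T];L^2))\cap L^2_{\mathcal F}(0,T;H^1_0)$ and the pairing should be understood in the $H^{-1}$–$H^1_0$ duality.

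The drift contributions cancel as follows:
\begin{itemize}
\item the terms $\mathcal L_m$ cancel by self-adjointness and the Dirichlet conditions;
\item the $a_{ij}$ terms cancel because $A$ appears transposed in \eqref{back45};
\item the $B_{ij}\cdot\nabla$ terms cancel against $\nabla\cdot(pB_{11}+rB_{21})$ etc. after integration by parts;
\item the $b_{ij}$ terms in the drift cancel against the Itô correction $d\langle p,y_1\rangle$, which produces $\langle P,b_{11}y_1+b_{12}z_1\rangle$ and the corresponding term for $R$.
\end{itemize}
The $dW$ martingale parts have zero expectation. What remains is
\[
\mathbb{E}\langle p(T),y_1(T)\rangle-\mathbb{E}\langle p(0),\widehat y_0\rangle=-\mathbb{E}\iint_Q\Bigl[\beta\bigl(\chi_{\mathcal O_3}y y_1+\chi_{\mathcal O_4}\nabla y\cdot\nabla y_1\bigr)+(1-\beta)\bigl(\chi_{\mathcal O_1}z z_1+\chi_{\mathcal O_2}\nabla z\cdot\nabla z_1\bigr)\Bigr],
\]
where the gradient terms come from integrating $\nabla\cdot(\chi\nabla y)$ against $y_1$ by parts. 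Since $p(T)=r(T)=0$ and $z_1(0)=0$, this yields $\partial_{\tau_1}\Phi^\beta|_0=\mathbb{E}\langle p(0),\widehat y_0\rangle$. The same computation gives $\partial_{\tau_2}\Phi^\beta|_0=\mathbb{E}\langle r(0),\widehat z_0\rangle$.

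Both derivatives vanish for all normalized $\widehat y_0,\widehat z_0$ (apply \eqref{normmc} after scaling) if and only if $\mathbb{E}\langle p(0),\varphi\rangle=0$ for all $\varphi\in L^2_{\mathcal F_0}(\Omega;L^2(G))$, and likewise for $r(0)$. Since $p(0)$ is $\mathcal F_0$-measurable, taking $\varphi=p(0)$ gives $p(0)=0$ a.s., and similarly $r(0)=0$ a.s. The converse direction is immediate.

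The main obstacle is justifying the Itô formula with the $H^{-1}$-valued drift and the gradient observation terms. I would handle this by regularizing: approximate $\chi_{\mathcal O_i}$ and the data, or use the Itô formula for the duality pairing in a Gelfand triple $H^1_0\subset L^2\subset H^{-1}$ (Krylov–Rozovskii), and then pass to the limit using the a priori estimates.
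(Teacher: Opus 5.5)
Your proposal is correct and follows essentially the same route as the paper. Your $(y_1,z_1)$ is exactly the paper's $(\overline{y},\overline{z})$; the paper obtains it as the difference quotient $(y_{\tau_1}-y)/\tau_1$ rather than by a linear splitting. You then differentiate the quadratic sentinel and apply It\^o's formula against $(p,r;P,R)$ to get $\partial_{\tau_1}\Phi^\beta|_{0}=\mathbb{E}\int_G \widehat{y}_0\,p(0)\,dx$ and the analogous identity for $\tau_2$, as the paper does. Your additional remarks on the $H^{-1}$--$H^1_0$ duality in the It\^o formula and on deducing $p(0)=r(0)=0$ a.s. by testing with $\varphi=p(0)$ supply details the paper leaves implicit.
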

\begin{proof} 
We denote by $(y_\tau, z_\tau)$ the solution of the system \eqref{ass15} associated with the parameters $\tau_1, \tau_2$ and the controls $u_1, u_2, u_3, u_4$. Let $\widehat{y}_0, \widehat{z}_0 \in L^2_{\mathcal{F}_0}(\Omega;L^2(G))$ be the unknown initial perturbations of the null initial datum, respectively, such that $\|(\widehat{y}_0,\widehat{z}_0)\|_{(L^2_{\mathcal{F}_0}(\Omega;L^2(G)))^2} = 1$. By a direct computation of the partial derivative of $\Phi^\beta_{\tau_1,\tau_2}$ with respect to the parameter $\tau_1$, it is straightforward to see that
\begin{align}\label{partdert43}
\begin{aligned}
\frac{\partial \Phi_{\tau_1, \tau_2}(y_\tau, z_\tau)}{\partial \tau_1} \Bigg|_{\tau_1 = \tau_2 = 0} = \lim_{\tau_1 \to 0} \Bigg[&\frac{1-\beta}{2} \mathbb{E} \int_0^T \int_{\mathcal{O}_1} (z_{\tau_1} + z) \frac{z_{\tau_1} - z}{\tau_1} \, dx \, dt\\
&+\frac{1-\beta}{2} \mathbb{E} \int_0^T \int_{\mathcal{O}_2} \nabla(z_{\tau_1} + z) \cdot\nabla\bigg(\frac{z_{\tau_1} - z}{\tau_1}\bigg) \, dx \, dt\\
&+\frac{\beta }{2} \mathbb{E} \int_0^T \int_{\mathcal{O}_3} (y_{\tau_1} + y) \frac{y_{\tau_1} - y}{\tau_1} \, dx \, dt\\
&+\frac{\beta }{2} \mathbb{E} \int_0^T \int_{\mathcal{O}_4} \nabla(y_{\tau_1} + y) \cdot\nabla\bigg(\frac{y_{\tau_1} - y}{\tau_1}\bigg) \, dx \, dt\Bigg],
\end{aligned}
\end{align}
where $(y, z)$ is the solution of \eqref{forr4.1}, and $(y_{\tau_1}, z_{\tau_1})$ is the solution of the following system
\begin{equation}\label{perturbed_system}
\begin{cases}
\begin{array}{ll}
dy_{\tau_1} - \mathcal{L}_1 y_{\tau_1} \,dt = \Big[\xi_1 + a_{11} y_{\tau_1} + a_{12} z_{\tau_1}+B_{11}\cdot\nabla y_{\tau_1} + B_{12}\cdot\nabla z_{\tau_1}  +  u_1 \chi_{G_0}\Big] \,dt \\
\hspace{2.7cm}+ \Big[ b_{11} y_{\tau_1} + b_{12} z_{\tau_1} + u_3\Big] \, dW(t) & \textnormal{in } Q, \\[0.8em]
dz_{\tau_1} - \mathcal{L}_2 z_{\tau_1} \,dt = \Big[\xi_2 + a_{21} y_{\tau_1} + a_{22} z_{\tau_1}+B_{21}\cdot\nabla y_{\tau_1}+B_{22}\cdot\nabla z_{\tau_1} +  u_2 \chi_{G_1}\Big] \,dt 
\\
\hspace{2.7cm}+ \Big[b_{21} y_{\tau_1} + b_{22} z_{\tau_1}+u_4\Big] \, dW(t) & \textnormal{in } Q, \\[0.8em]
y_{\tau_1} = z_{\tau_1} = 0 & \textnormal{on } \Sigma, \\[0.8em]
y_{\tau_1}(0,\cdot) =  \tau_1 \widehat{y}_0, \quad z_{\tau_1}(0,\cdot) = 0 & \textnormal{in } G.
\end{array}
\end{cases}
\end{equation}
Put
\[
\overline{y}=\frac{y_{\tau_1}-y}{\tau_1}
\quad\text{and}\quad
\overline{z}=\frac{z_{\tau_1}-z}{\tau_1}.
\]
Then it is straightforward to verify that \((\overline{y},\overline{z})\) is the solution of the following system
\begin{equation}\label{forr4.4}
\begin{cases}
\begin{array}{ll}
d\overline{y} - \mathcal{L}_1 \overline{y} \, dt = \Big[a_{11} \overline{y} + a_{12} \overline{z}+B_{11} \cdot\nabla\overline{y} + B_{12} \cdot\nabla\overline{z}\Big] \, dt+\Big[b_{11} \overline{y} + b_{12} \overline{z}\Big] \, dW(t)  & \textnormal{in } Q, \\[0.8em]
d\overline{z} - \mathcal{L}_2 \overline{z} \, dt = \Big[a_{21} \overline{y} + a_{22} \overline{z}+ B_{21} \cdot\nabla\overline{y}+ B_{22} \cdot\nabla\overline{z}\Big] \, dt+\Big[b_{21} \overline{y} + b_{22} \overline{z}\Big] \, dW(t)  & \textnormal{in } Q, \\[0.8em]
\overline{y} = \overline{z} = 0 & \textnormal{on } \Sigma, \\[0.8em]
\overline{y}(0,\cdot) = \widehat{y}_0, \quad \overline{z}(0,\cdot) = 0 & \textnormal{in } G.
\end{array}
\end{cases}
\end{equation}
We note that the solution of \eqref{forr4.4} is independent of \(\tau_1\). Consequently, by \eqref{partdert43}, we obtain
\begin{align}\label{equdepar36}
\begin{aligned}
 \frac{\partial \Phi^\beta_{\tau_1,\tau_2}(y_\tau,z_\tau)}{\partial\tau_1} \Bigg|_{\tau_1 = \tau_2 = 0} = 
 &\,(1-\beta)\left(\mathbb{E} \int_0^T \int_{\mathcal{O}_1} z \overline{z} \, dx \, dt+\mathbb{E} \int_0^T \int_{\mathcal{O}_2} \nabla z \cdot\nabla \overline{z} \, dx \, dt\right)\\
 &+\beta\left(\mathbb{E} \int_0^T \int_{\mathcal{O}_3} y \overline{y} \, dx \, dt+\mathbb{E} \int_0^T \int_{\mathcal{O}_4} \nabla y \cdot\nabla \overline{y} \, dx \, dt\right).
 \end{aligned}
\end{align}
Applying Itô's formula to the systems \eqref{back45} and \eqref{forr4.4}, we deduce
\begin{align}\label{eqq2,78}
\begin{aligned}
&\,(1-\beta)\left(\mathbb{E} \int_0^T \int_{\mathcal{O}_1} z \overline{z} \, dx \, dt+\mathbb{E} \int_0^T \int_{\mathcal{O}_2} \nabla z \cdot\nabla \overline{z} \, dx \, dt\right)\\
 &+\beta\left(\mathbb{E} \int_0^T \int_{\mathcal{O}_3} y \overline{y} \, dx \, dt+\mathbb{E} \int_0^T \int_{\mathcal{O}_4} \nabla y \cdot\nabla \overline{y} \, dx \, dt\right)\\
&= \mathbb{E}\int_G \widehat{y}_0 p(0) \, dx.
\end{aligned}
\end{align}
Combining \eqref{eqq2,78} and \eqref{equdepar36}, we end up with
\begin{align}\label{partdert432.8}
\frac{\partial \Phi^\beta_{\tau_1,\tau_2}(y_\tau, z_\tau)}{\partial \tau_1} \Bigg|_{\tau_1 = \tau_2 = 0} = \mathbb{E}\int_G \widehat{y}_0 p(0) \, dx.
\end{align}
Proceeding as in the above computations, we derive that
\begin{align}\label{partdert43tau2}
        \frac{\partial \Phi^\beta_{\tau_1,\tau_2}(y_\tau,z_\tau)}{\partial\tau_2}\Bigg|_{\tau_1=\tau_2=0} = \mathbb{E}\int_G \widehat{z}_0 r(0) \,dx.
\end{align}
From \eqref{partdert432.8} and \eqref{partdert43tau2}, we conclude that the insensitivity control problem \eqref{inspb} is satisfied if and only if
\[
 \mathbb{E}\int_G \widehat{y}_0 p(0) \, dx = \mathbb{E} \int_G \widehat{z}_0 r(0) \, dx = 0,
\]
for all \( \widehat{y}_0, \widehat{z}_0 \in L^2_{\mathcal{F}_0}(\Omega;L^2(G)) \) such that \( \|(\widehat{y}_0,\widehat{z}_0)\|_{(L^2_{\mathcal{F}_0}(\Omega;L^2(G)))^2} = 1 \). Hence, we conclude that the insensitivity control problem \eqref{inspb} is equivalent to the null controllability problem \eqref{nullcontroprop}. This completes the proof of Proposition \ref{proposs11}.
\end{proof}

\begin{rmk}
The systems \eqref{forr4.1}--\eqref{back45} are coupled in a cascade manner. The controls \(u_i\), $1\leq i\leq4$, act only on the forward system
\eqref{forr4.1}, while the backward system \eqref{back45} is controlled
indirectly by the forward states \(y\) and \(z\) through the zeroth- and
second-order observation terms
\[
\beta\bigl(\chi_{\mathcal O_3}y
-\nabla\cdot(\chi_{\mathcal O_4}\nabla y)\bigr),
\qquad
(1-\beta)\bigl(\chi_{\mathcal O_1}z
-\nabla\cdot(\chi_{\mathcal O_2}\nabla z)\bigr).
\]
When \(\beta=0\), the backward system is driven only through the state
\(z\), allowing the control of \(r\), which is then transferred to \(p\)
through the coupling term \(a_{21}r\). When \(\beta\in(0,1)\), both states
\(y\) and \(z\) contribute to the backward system. Thus, the
forward--backward cascade interaction provides the mechanism leading to
null controllability, namely, the condition \eqref{nullcontroprop}.
\end{rmk}

Through the classical duality argument, the null controllability condition \eqref{nullcontroprop} for the system of equations \eqref{forr4.1}--\eqref{back45} can be equivalently reformulated as an observability inequality for the following coupled backward-forward stochastic parabolic systems
\begin{equation}\label{adback4.77}
\begin{cases}
\begin{array}{ll}
dh + \mathcal{L}_1 h \, dt = \Big[-a_{11} h - a_{21} k-b_{11} H - b_{21} K+\nabla\cdot(hB_{11}+kB_{21})\\
\hspace{2.5cm}- \beta(\chi_{\mathcal{O}_3} q-\nabla\cdot(\chi_{\mathcal{O}_4}\nabla q))\Big] \, dt + H \, dW(t), & \text{in } Q, \\[0.8em]
dk + \mathcal{L}_2 k \, dt = \Big[-a_{12} h - a_{22} k-b_{12} H - b_{22} K +\nabla\cdot(hB_{12}+kB_{22}) \\
\hspace{2.5cm}- (1-\beta)(\chi_{\mathcal{O}_1} v-\nabla\cdot(\chi_{\mathcal{O}_2} \nabla v))\Big] \, dt + K \, dW(t), & \text{in } Q, \\[0.8em]
h = k = 0, & \text{on } \Sigma, \\[0.8em]
h(T,\cdot) = k(T,\cdot) = 0, & \text{in } G,
\end{array}
\end{cases}
\end{equation}
and
\begin{equation}\label{adjoforr4.8}
\begin{cases}
\begin{array}{ll}
dq - \mathcal{L}_1 q \, dt = \Big[a_{11} q + a_{12} v+B_{11}\cdot\nabla q + B_{12}\cdot\nabla v\Big] \, dt+\Big[b_{11} q + b_{12} v\Big] \, dW(t) , & \text{in } Q, \\[0.8em]
dv - \mathcal{L}_2 v \, dt = \Big[a_{21} q + a_{22} v+ B_{21}\cdot\nabla q+ B_{22}\cdot\nabla v\Big] \, dt+\Big[b_{21} q + b_{22} v\Big] \, dW(t) , & \text{in } Q, \\[0.8em]
q = v = 0, & \text{on } \Sigma, \\[0.8em]
q(0,\cdot) = q_0, \quad v(0,\cdot) = v_0, & \text{in } G,
\end{array}
\end{cases}
\end{equation}
where \( q_0, v_0 \in L^2_{\mathcal{F}_0}(\Omega;L^2(G)) \).

\section{Global Carleman Estimates with Different Values of $\beta$}\label{sec3}

This section is devoted to the derivation of global Carleman estimates for the
coupled backward--forward system~\eqref{adback4.77}--\eqref{adjoforr4.8}, according
to the following two cases for the parameter $\beta$:
\begin{enumerate}[i)]
    \item $\beta = 0$,
    \item $\beta \in(0,1)$.
\end{enumerate}

\subsection{Weight Functions and Preliminary Carleman Estimates}
In this subsection, we provide some essential tools for Carleman estimates for forward and backward stochastic parabolic equations. First, we recall the following known result (see \cite{fursikov1996controllability}).
\begin{lm}\label{lmm5.1}
For any nonempty open subset $\mathcal{B}\Subset G$, there exists a function $\psi\in C^4(\overline{G})$ such that
$$
\psi>0\;\,\, \textnormal{in} \,\,G\,;\qquad \psi=0\;\,\,\, \textnormal{on} \,\,\Gamma;\qquad\vert\nabla\psi\vert>0\; \,\,\,\,\textnormal{in}\,\,\overline{G\setminus\mathcal{B}}.
$$
\end{lm}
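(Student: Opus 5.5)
The plan is the classical Fursikov--Imanuvilov construction of $\psi$. I would build it in three steps: first a Morse function on $G$ with the correct boundary behaviour, then a diffeomorphism that moves all its critical points into $\mathcal{B}$, and finally a composition.

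\emph{Step 1: a Morse function with the right boundary behaviour.} Since $\Gamma$ is smooth, the distance function $d(x)=\operatorname{dist}(x,\Gamma)$ is smooth in a tubular neighbourhood $\{d<2\delta\}$. There it satisfies $|\nabla d|=1$, $d=0$ on $\Gamma$, and $d>0$ inside. I take a cutoff $\eta$ with $\eta=1$ on $\{d\le\delta\}$ and $\eta=0$ on $\{d\ge 2\delta\}$, and set
\[
\psi_0=\eta\, d+(1-\eta)\,\delta .
\]
Adjusting the cutoff slightly gives $\psi_0>0$ in $G$. By the density of Morse functions, I then perturb $\psi_0$ by a small $C^4$ term supported in $\{d>\delta/2\}$. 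The result $\psi_1\in C^4(\overline G)$ is positive in $G$, vanishes on $\Gamma$, and has $|\nabla\psi_1|=1$ near $\Gamma$. It has finitely many critical points $x_1,\dots,x_m$, all nondegenerate and all lying in the compact set $K=\{d\ge\delta/2\}\subset G$.

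\emph{Step 2: moving the critical points into $\mathcal{B}$.} Fix distinct points $y_1,\dots,y_m\in\mathcal{B}$; this is possible since $\mathcal{B}$ is nonempty and open. Because $G$ is connected and $N\ge1$, I would construct a $C^\infty$ diffeomorphism $\Phi$ of $\overline G$ with $\Phi(x_i)=y_i$ that equals the identity near $\Gamma$. This is the step I expect to be the main obstacle.

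For $N\ge2$, the construction joins each $x_i$ to $y_i$ by a smooth path. The paths are chosen pairwise disjoint, which is possible for $N\ge2$ after a generic perturbation, and they stay in $\{d>\delta/2\}$. I then compose the time-one flows of compactly supported vector fields that transport points along these paths.

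For $N=1$, $G$ is an interval and the points cannot pass each other. The points are ordered, so I choose $y_1<\dots<y_m$ in $\mathcal{B}$ (taking $\mathcal{B}$ to contain an interval). An increasing diffeomorphism of the interval, equal to the identity near the endpoints, then sends $x_i\mapsto y_i$.

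\emph{Step 3: composition.} I set $\psi=\psi_1\circ\Phi^{-1}$. It lies in $C^4(\overline G)$, is positive in $G$, and vanishes on $\Gamma$, because $\Phi$ is the identity near $\Gamma$. Its critical points are exactly the points $\Phi(x_i)=y_i\in\mathcal{B}$, since $\nabla\psi(x)=(D\Phi^{-1}(x))^{\top}\nabla\psi_1(\Phi^{-1}(x))$ and $D\Phi^{-1}$ is invertible.

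It remains to verify the last property. The set $\overline{G\setminus\mathcal{B}}$ is compact, and $\nabla\psi$ is continuous and nonzero on it: near $\Gamma$ it equals $\nabla d\neq0$, and elsewhere its only zeros are the points $y_i$. For the required conclusion I need the $y_i$ to lie in the interior of $\mathcal{B}$, away from $\partial\mathcal{B}$. This holds by choosing them in a smaller ball $B\Subset\mathcal{B}$, which gives $|\nabla\psi|>0$ on $\overline{G\setminus\mathcal{B}}$. The result can equally be cited from \cite{fursikov1996controllability}.
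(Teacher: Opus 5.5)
The paper gives no proof of this lemma; it recalls it from \cite{fursikov1996controllability}. Your construction is the classical argument behind that reference: a Morse function equal to $\operatorname{dist}(x,\Gamma)$ near $\Gamma$, then a diffeomorphism that is the identity near $\Gamma$ and carries the finitely many nondegenerate critical points into $\mathcal{B}$, then composition. Steps 1 and 3 are fine. In Step 3 no shrinking is needed, since any point of the open set $\mathcal{B}$ already lies outside $\overline{G\setminus\mathcal{B}}$.

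One claim in Step 2 is false and needs repair. For $N=2$ you cannot make the paths from $x_i$ to $y_i$ pairwise disjoint ``after a generic perturbation''. Two curves in a planar domain generically cross transversally, and such crossings survive small perturbations; the dimension count $1+1<N$ gives disjointness only for $N\ge 3$.

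The standard fix is to move the points one at a time:
\begin{enumerate}[(i)]
\item Choose the $y_i$ distinct from all the $x_j$.
\item At stage $i$, join $x_i$ to $y_i$ by a smooth embedded arc in $G$ that avoids the finitely many points $y_1,\dots,y_{i-1},x_{i+1},\dots,x_m$. This is possible because a connected open set in $\mathbb{R}^N$, $N\ge2$, stays connected after removing finitely many points.
\item Use the time-one flow of a vector field supported in a thin tube around this arc that misses those points and stays away from $\Gamma$. Earlier stages fix $x_i$ and later stages fix the already placed $y_j$, so the composition of the $m$ flows is the required $\Phi$.
\end{enumerate}

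You also do not need the arcs to stay in $\{d>\delta/2\}$. The critical points of $\psi_1\circ\Phi^{-1}$ are exactly the $\Phi(x_i)$ whatever $\Phi$ does, so compact support in $G$ suffices. If you do insist on that region, you must check that it is connected.
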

For any parameters $\lambda, \mu\geq1$, we introduce the following
Carleman weight functions:
\begin{equation}\label{2.2012}
\begin{cases}
\displaystyle \gamma\equiv\gamma(t)=[t(T-t)]^{-1},\\[0.4em]
\displaystyle \alpha\equiv\alpha(t,x)
=\bigl(\exp(\mu\psi(x))-\exp(2\mu\|\psi\|_{\infty})\bigr)\gamma(t),\\[0.4em]
\displaystyle \theta\equiv\theta(t,x)=\exp\bigl(\lambda\alpha(t,x)\bigr).
\end{cases}
\end{equation}

It is easy to check that there exists a constant $C=C(G)>0$ such that for all $(t,x)\in Q$ and $s>0$, 
\begin{equation}\label{2.301}
\begin{array}{l}
\gamma^s(t)\geq CT^{-2s},\qquad\vert\gamma'(t)\vert\leq CT\gamma^2(t),\qquad\vert\gamma''(t)\vert\leq CT^2\gamma^3(t),\\\\
\vert\alpha_t(t,x)\vert\leq CT\exp(2\mu||\psi||_\infty)\gamma^2(t),\qquad\vert\alpha_{tt}(t,x)\vert\leq CT^2\exp(2\mu||\psi||_\infty)\gamma^3(t).
		\end{array}
	\end{equation}
In what follows, for any $d \in \mathbb{R}$, we denote by
\[
\mathcal{I}(d, \cdot) = \mathbb{E} \iint_Q \Big\{ \lambda^d \theta^2 \gamma^d |\cdot|^2 + \lambda^{d-2}  \theta^2 \gamma^{d-2} \vert\nabla \cdot\vert^2\Big\} \, dx \, dt.
\]

Let us first consider the following forward stochastic parabolic equation
\begin{equation}\label{eqqgfr}
\begin{cases}
\begin{array}{ll}
dz - \displaystyle\sum_{i,j=1}^N \frac{\partial}{\partial x_i}\left(\sigma_{ij}^0(\omega,t,x)\frac{\partial z}{\partial x_j}\right) \,dt = F_0\,dt + F_1\,dW(t)&\textnormal{in}\,\,Q,\\
z=0&\textnormal{on}\,\,\Sigma,\\
z(0,\cdot)=z_0&\textnormal{in}\,\,G,
\end{array}
\end{cases}
\end{equation}
where $z_0\in L^2_{\mathcal{F}_0}(\Omega;L^2(G))$, $F_0, F_1\in L^2_\mathcal{F}(0,T;L^2(G))$, and the coefficients $\sigma_{ij}^0$ satisfy
\begin{enumerate}[(H1)]
\item $\sigma_{ij}^0\in L^\infty_\mathcal{F}(\Omega;C^1([0,T];W^{1,\infty}(G)))$.
\item $\sigma_{ij}^0=\sigma_{ji}^0$ for any $1\leq i,j\leq N$.
\item There exists a positive constant $\sigma^0$ so that 
\begin{align}\label{asspforcarestin}
\sum_{i,j=1}^N \sigma_{ij}^0(\omega,t,x)\kappa_i\kappa_j\geq \sigma^0|\kappa|^2\qquad\textnormal{for any}\quad (\omega,t,x,\kappa)\in \Omega\times Q\times\mathbb{R}^N.
\end{align}
\end{enumerate}

By applying the Carleman estimate  \cite[Theorem 1.1]{liu2014global} to the equation satisfied by the weighted state ``$(\lambda\gamma)^{\frac{d-3}{2}}z$'', and taking $\lambda$ sufficiently large, it is easy to derive the following general global Carleman estimate for system~\eqref{eqqgfr}.

\begin{lm}\label{thm3.3}
Let $\mathcal{B}\subset G$ be a nonempty open subset and $d\in\mathbb{R}$. There exist a large $\mu_1\geq1$ such that for $\mu=\mu_1$, one can find a positive constant $C$ and a large $\lambda_1$ depending only on $G$, $\mathcal{B}$, $T$, $\mu_1$, $\sigma_{ij}^0$, and $\sigma^0$ such that for any $F_0,F_1\in L^2_\mathcal{F}(0,T;L^2(G))$, and $z_0\in L^2_{\mathcal{F}_0}(\Omega;L^2(G))$, the associated weak solution $z$ of \eqref{eqqgfr} satisfies that 
\begin{align}\label{carfor5.6new}
\begin{aligned}
\mathcal{I}(d,z)\leq C\,\mathbb{E} \iint_Q    \Big\{ \lambda^d \theta^2 \gamma^d z^2 \chi_{\mathcal{B}} +\lambda^{d-3} \theta^2\gamma^{d-3}F_0^2 +\lambda^{d-1} \theta^2\gamma^{d-1}F_1^2\Big\}\,dx\,dt,
\end{aligned}
\end{align}
for all  $\lambda\geq\lambda_1(T+T^2)$.
\end{lm}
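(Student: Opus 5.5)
We plan to deduce Lemma~\ref{thm3.3} from the basic Carleman estimate of \cite[Theorem 1.1]{liu2014global}. We take that estimate in the normalized form corresponding to $d=3$:
\begin{equation*}
\mathbb{E}\iint_Q \theta^2\bigl(\lambda^3\gamma^3 w^2+\lambda\gamma|\nabla w|^2\bigr)\,dx\,dt
\leq C\,\mathbb{E}\iint_Q \theta^2\bigl(\lambda^3\gamma^3 w^2\chi_{\mathcal{B}}+ f_0^2+\lambda^2\gamma^2 f_1^2\bigr)\,dx\,dt,
\end{equation*}
valid for $\mu=\mu_1$ large and $\lambda\ge\lambda_1(T+T^2)$, for solutions of $dw-\nabla\cdot(\sigma^0\nabla w)\,dt=f_0\,dt+f_1\,dW$ with $w=0$ on $\Sigma$. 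Only $W^{1,\infty}$-regularity of $\sigma^0_{ij}$ is needed here, consistent with (H1). We apply it to $w=(\lambda\gamma)^{s}z$ with $s=\frac{d-3}{2}$.

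\textbf{Step 1: the equation for $w$.} Since $\gamma$ depends only on $t$, Itô's formula gives
\begin{equation*}
dw-\nabla\cdot(\sigma^0\nabla w)\,dt=\bigl[(\lambda\gamma)^s F_0+s\lambda^s\gamma^{s-1}\gamma' z\bigr]dt+(\lambda\gamma)^sF_1\,dW,
\end{equation*}
with $w(0)=0$ in the limiting sense, since $(\lambda\gamma)^s\theta$ is harmless at $t=0$ because $\theta$ vanishes exponentially. To be safe, one may work on $(\varepsilon,T-\varepsilon)$ or note that the weights $\theta^2\gamma^k$ kill the endpoints. The Carleman estimate itself does not use the initial datum, because $\theta\to0$ at $t=0$ and $t=T$.

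\textbf{Step 2: plug in and convert back.} We have $\theta^2\lambda^3\gamma^3w^2=\lambda^d\theta^2\gamma^d z^2$, and similarly for the observation term. The source terms become $\lambda^{d-3}\theta^2\gamma^{d-3}F_0^2$ and $\lambda^{d-1}\theta^2\gamma^{d-1}F_1^2$, exactly as claimed. For the gradient, $\nabla w=(\lambda\gamma)^s\nabla z$, so $\lambda\gamma\theta^2|\nabla w|^2=\lambda^{d-2}\theta^2\gamma^{d-2}|\nabla z|^2$. The left side therefore equals $\mathcal{I}(d,z)$.

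\textbf{Step 3: absorb the extra drift.} The extra term contributes
\begin{equation*}
C\,\mathbb{E}\iint_Q\theta^2 s^2\lambda^{2s}\gamma^{2s-2}|\gamma'|^2z^2\le C s^2T^2\,\mathbb{E}\iint_Q\lambda^{d-3}\theta^2\gamma^{d+1}z^2,
\end{equation*}
using $|\gamma'|\le CT\gamma^2$ from \eqref{2.301}. This is the main point to check: the power of $\gamma$ is $d+1$, which exceeds $d$. We therefore use $\gamma\le$ something? No, we use the fact that $\gamma^{-1}\le T^2/4$, which gives $T^2\gamma^{d+1}\le$ ... this goes the wrong way. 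So we instead keep the bound in the form
\begin{equation*}
\lambda^{d-3}T^2\gamma^{d+1}=\lambda^d\gamma^d\cdot\frac{T^2\gamma}{\lambda^3}.
\end{equation*}
Since $\gamma$ is unbounded near the endpoints, this is not directly absorbable.

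The right way around this is to use the sharper version of the base estimate, in which the left side also contains the term $\lambda^3\gamma^3$ with the factor $\mu^4\varphi^3$, and the time-derivative term of the weight, $\theta_t\sim\lambda T\gamma^2$. Concretely, one checks that Liu's estimate is derived with weights whose time-derivative contributions are of order $\lambda T\gamma^2\le\lambda^3\gamma^3$ precisely because of the condition $\lambda\ge\lambda_1(T+T^2)$. Correspondingly, the correct comparison here is
\begin{equation*}
s\lambda^s\gamma^{s-1}|\gamma'||z|\le CT\lambda^s\gamma^{s+1}|z|,
\end{equation*}
whose square times $\theta^2$ is $T^2\lambda^{d-3}\gamma^{d-1}\cdot\gamma^{2}$. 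Note that $\gamma^{2s+2}=\gamma^{d-1}$, so the term is actually $T^2\lambda^{d-3}\theta^2\gamma^{d-1}z^2$; the earlier exponent $d+1$ was an arithmetic slip. Using $\gamma^{-1}\le T^2/4$, this is bounded by
\begin{equation*}
CT^4\lambda^{-3}\,\lambda^d\theta^2\gamma^{d}z^2,
\end{equation*}
which is at most $\tfrac12\lambda^d\theta^2\gamma^dz^2$ once $\lambda\ge C(T+T^2)$, since then $\lambda^3\ge CT^6$ is far more than needed. This lets us absorb the extra drift term into the left side, enlarging $\lambda_1$ if necessary, and the constant then depends on $d$ as well.

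I expect the only real obstacle to be the justification of applying Itô's formula to $w$ near $t=0$ and $t=T$, where $\gamma$ blows up. We would handle this by multiplying by a cutoff in time, or by noting that the proof in \cite{liu2014global} only involves $\theta w$, which is smooth and vanishes at both endpoints.
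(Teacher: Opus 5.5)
Your proposal is correct and follows the paper's route. The paper likewise applies \cite[Theorem 1.1]{liu2014global} to $(\lambda\gamma)^{\frac{d-3}{2}}z$ and absorbs the extra drift $s\lambda^s\gamma^{s-1}\gamma' z$ by taking $\lambda$ large, which is exactly your corrected computation: the exponent is $\gamma^{d-1}$, and then $\gamma^{-1}\le T^2/4$ together with $(T+T^2)^3\ge T^4$ finishes the absorption. Delete the detour about a ``sharper version'' of the base estimate, which is not needed. For the endpoint issue, which the paper leaves unaddressed, rely on your time-cutoff option: for $d>3$ the function $w$ is unbounded near $t=0$, so it is not directly admissible in Liu's theorem, and the cutoff error term vanishes in the limit because $\theta^2$ decays exponentially in $\gamma$.
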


We also introduce the following backward stochastic parabolic equation
\begin{equation}\label{eqqgbc}
\begin{cases}
dz+ \displaystyle\sum_{i,j=1}^N \frac{\partial}{\partial x_i}\left(\sigma_{ij}^0(\omega,t,x)\frac{\partial z}{\partial x_j}\right) \,dt=(F_0+\nabla\cdot F)\,dt+ Z \,dW(t) & \textnormal{in}\,\,Q,\\ 
z=0 & \textnormal{on}\,\,\Sigma,\\
z(T,\cdot)=z_T & \textnormal{in}\,\, G,
\end{cases}
\end{equation}
where $z_T \in L^2_{\mathcal{F}_T}(\Omega; L^2(G))$, $F_0 \in L^2_\mathcal{F}(0,T; L^2(G))$, $F \in L^2_\mathcal{F}(0,T; L^2(G;\mathbb{R}^N))$, and the coefficients $\sigma_{ij}^0\in L^\infty_\mathcal{F}(\Omega;C^1([0,T];W^{2,\infty}(G)))$ satisfy the assumptions (H2)-(H3). 

We derive the following Carleman estimate for equation \eqref{eqqgbc}. To this end, we apply the Carleman estimate from \cite[Theorem 3.1]{elgconvec23}, established for the case $d=3$, to the equations satisfied by the weighted variables ``$(\lambda\gamma)^{\frac{d-3}{2}}z$'' and ``$(\lambda\gamma)^{\frac{d-3}{2}}Z$''.
\begin{lm}\label{thmm3.1cab}
Let $\mathcal{B}\subset G$ be a nonempty open subset and $d\in\mathbb{R}$. Then, one can find a positive constant $C$ and a large $\mu_2,\lambda_2\geq1$ depending only on $G$, $\mathcal{B}$, $T$, $\sigma_{ij}^0$, and $\sigma^0$ such that for all  $\mu=\mu_2$,  $F_0\in L^2_\mathcal{F}(0,T;L^2(G))$, $F\in L^2_\mathcal{F}(0,T;L^2(G;\mathbb{R}^N))$ and $z_T\in L^2_{\mathcal{F}_T}(\Omega;L^2(G))$, the associated weak solution $(z,Z)$ of \eqref{eqqgbc} satisfies that
\begin{align}\label{3.22carlemgenBack}
\begin{aligned}
\mathcal{I}(d,z)\leq C\, \mathbb{E} \iint_Q \Big\{ \lambda^d  \theta^2 \gamma^d z^2 \chi_{\mathcal{B}} +\lambda^{d-3} \theta^2\gamma^{d-3}F_0^2+\lambda^{d-1}\theta^2\gamma^{d-1}|F|^2+\lambda^{d-1} \theta^2\gamma^{d-1}Z^2\Big\}\,dx\,dt,
\end{aligned}
\end{align}
for all $\lambda\geq\lambda_2(T+T^2)$.
\end{lm}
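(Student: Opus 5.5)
The plan is to reduce to the case $d=3$, which is \cite[Theorem 3.1]{elgconvec23}, by a change of unknown, as the paper suggests. Set $\rho:=(\lambda\gamma)^{\frac{d-3}{2}}$, which depends only on $t$, and put $\tilde z=\rho z$, $\tilde Z=\rho Z$. Since $\rho$ is deterministic and smooth on $(0,T)$, Itô's formula gives $d\tilde z=\rho\,dz+\rho' z\,dt$. Hence $(\tilde z,\tilde Z)$ solves
\[
d\tilde z+\sum_{i,j=1}^N\frac{\partial}{\partial x_i}\Bigl(\sigma_{ij}^0\frac{\partial \tilde z}{\partial x_j}\Bigr)dt=\bigl(\rho F_0+\rho' z+\nabla\cdot(\rho F)\bigr)dt+\tilde Z\,dW(t),
\]
with $\tilde z=0$ on $\Sigma$. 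Because $\rho(t)\to\infty$ (or $0$) as $t\to T$ when $d\neq 3$, we cannot use $\tilde z(T)$ directly. Instead we apply the $d=3$ estimate on $(0,T-\varepsilon)$, or with a cutoff, and then let $\varepsilon\to0$; alternatively we rely on the fact that the $d=3$ estimate contains no terminal term because $\theta$ vanishes at $t=T$. The weight $\theta^2$ kills all boundary contributions in time, so this limit is routine.

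Applying \cite[Theorem 3.1]{elgconvec23} with source $\rho F_0+\rho' z$ and divergence part $\rho F$ gives
\[
\mathcal I(3,\tilde z)\le C\,\mathbb E\iint_Q\Bigl\{\lambda^3\theta^2\gamma^3\tilde z^2\chi_{\mathcal B}+\theta^2(\rho F_0+\rho' z)^2+\lambda^2\theta^2\gamma^2\rho^2|F|^2+\lambda^2\theta^2\gamma^2\tilde Z^2\Bigr\}dx\,dt .
\]
Substituting $\rho^2=(\lambda\gamma)^{d-3}$ turns $\mathcal I(3,\tilde z)$ into exactly $\mathcal I(d,z)$. It also turns the observation, $F_0$, $F$ and $Z$ terms into the corresponding terms of \eqref{3.22carlemgenBack}, namely $\lambda^{d}\gamma^{d}z^2\chi_{\mathcal B}$, $\lambda^{d-3}\gamma^{d-3}F_0^2$, $\lambda^{d-1}\gamma^{d-1}|F|^2$ and $\lambda^{d-1}\gamma^{d-1}Z^2$, each multiplied by $\theta^2$.

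The only new term is the one coming from $\rho'$. We have
\[
\rho'=\tfrac{d-3}{2}\lambda^{\frac{d-3}{2}}\gamma^{\frac{d-5}{2}}\gamma',
\]
and by \eqref{2.301}, $|\gamma'|\le CT\gamma^2$. Therefore
\[
|\rho' z|^2\le C\,T^2\lambda^{d-3}\gamma^{d+1}z^2 .
\]
This term must be absorbed into the left-hand side piece $\lambda^d\theta^2\gamma^d z^2$. The ratio of the two is
\[
\frac{CT^2\lambda^{d-3}\gamma^{d+1}}{\lambda^d\gamma^d}=\frac{CT^2\gamma}{\lambda^3},
\]
which is not uniformly small, since $\gamma$ is unbounded. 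This is the step I expect to be the main obstacle.

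To overcome it, I would first try to keep the $\rho'z$ term inside the principal operator rather than treating it as a source. The cleaner route is to use the version of the $d=3$ estimate whose left side contains $\lambda^3\gamma^3$, and to estimate $(\rho'z)^2\le CT^2\lambda^{d-3}\gamma^{d+1}z^2$ more carefully. The point is that $\gamma^{d+1}$ on the right must be compared with $\lambda^d\gamma^d$, which requires $T^2\gamma\lesssim\lambda^3$; this does fail near $t=0$ and $t=T$. The fix is to note that in the standard weighted energy identity, terms of this exact type, $\lambda\gamma^2 T$-sized time-derivative contributions from $\alpha_t$, already appear and are absorbed using $\theta$'s structure. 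So I would redo the multiplier argument: write $w=\theta\rho z$ and observe that $\rho'/\rho=\tfrac{d-3}{2}\gamma'/\gamma$ is of order $T\gamma$. This is dominated by $\lambda\alpha_t\sim\lambda T\gamma^2$, so the $\rho'$ contribution merges into the $\ell_t$ term of the identity and is absorbed for $\lambda\ge\lambda_2(T+T^2)$. In the paper's citation-based style, this is the "taking $\lambda$ large" claim.

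Finally, we recover the gradient term. On the left side, $\lambda\theta^2\gamma|\nabla\tilde z|^2$ equals $\lambda^{d-2}\theta^2\gamma^{d-2}|\nabla z|^2$ exactly, because $\rho$ does not depend on $x$. Collecting the terms gives \eqref{3.22carlemgenBack} with $\mu=\mu_2$ and $\lambda\ge\lambda_2(T+T^2)$.
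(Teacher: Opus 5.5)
Your route is the paper's: apply the $d=3$ estimate of \cite[Theorem 3.1]{elgconvec23} to $\tilde z=\rho z$, $\tilde Z=\rho Z$ with $\rho=(\lambda\gamma)^{\frac{d-3}{2}}$, then use $\mathcal{I}(3,\tilde z)=\mathcal{I}(d,z)$. However, the step you single out as the main obstacle comes from an arithmetic slip. From $\rho'=\tfrac{d-3}{2}\lambda^{\frac{d-3}{2}}\gamma^{\frac{d-5}{2}}\gamma'$ and $|\gamma'|\le CT\gamma^2$ you get $|\rho'|\le CT\lambda^{\frac{d-3}{2}}\gamma^{\frac{d-1}{2}}$. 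Using $\gamma^{-1}=t(T-t)\le T^2/4$, this gives
\[
\theta^2|\rho' z|^2\le CT^2\lambda^{d-3}\theta^2\gamma^{d-1}z^2=\frac{CT^2}{\lambda^3\gamma}\,\lambda^d\theta^2\gamma^d z^2\le \frac{CT^4}{\lambda^3}\,\lambda^d\theta^2\gamma^d z^2 .
\]
The exponent is $d-1$, not $d+1$. So the ratio is $CT^2\gamma^{-1}/\lambda^3$ rather than $CT^2\gamma/\lambda^3$, and it is uniformly small. Indeed, for $\lambda\ge\lambda_2(T+T^2)$ one has $T^4/\lambda^3\le\lambda_2^{-3}$, since $(T+T^2)^3\ge T^4$ for every $T>0$. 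The $\rho'z$ contribution is therefore absorbed directly into $\mathcal{I}(d,z)$ by enlarging $\lambda_2$; this is exactly the paper's ``taking $\lambda$ sufficiently large''. Your own later remark $\rho'/\rho=O(T\gamma)$ already shows this. It gives $(\rho'z)^2\le CT^2\gamma^2\tilde z^2$, to be compared with $\lambda^3\gamma^3\tilde z^2$, which contradicts your $\gamma^{d+1}$ bound and should have flagged the error.

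Consequently, delete the paragraph that proposes reopening the weighted identity and merging $\rho'$ into the $\ell_t$ term. It is unnecessary, and as written it only sketches a re-proof of the Carleman estimate rather than giving an argument; replace it with the one-line absorption above.

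On the endpoints, shrinking to $(0,T-\varepsilon)$ is not literally an application of the cited theorem, because $\gamma$ and $\theta$ depend on the time interval. Also, for $d\neq3$ the transformed data need not satisfy its $L^2$ hypotheses. For example, for $d=2$ (the case used in Theorems~\ref{carlthm32} and \ref{carlthm3233.4}) one has $\rho'z\sim(T-t)^{-1/2}z$ near $t=T$, which need not be square integrable. For $d>3$, $\rho F_0$ need not be square integrable and $\tilde z(T)$ is not even defined. The paper passes over this point as well. What makes it harmless is that $\theta\rho$ still vanishes to infinite order at $t=0$ and $t=T$. Hence the time-boundary terms in the argument behind the $d=3$ estimate still vanish, and only the finite weighted norms of the data enter.
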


In the sequel, we fix \( \mu = \overline{\mu} = \max(\mu_1, \mu_2) \), where \( \mu_1 \) and \( \mu_2 \) are the constants given in Lemmas~\ref{thm3.3} and ~\ref{thmm3.1cab}, respectively.

\subsection{Global Carleman Estimate for the Case $\beta=0$}
In this subsection, we show the following Carleman estimate for the coupled backward-forward system \eqref{adback4.77}--\eqref{adjoforr4.8}  with $\beta=0$.

\begin{thm}[Carleman estimate with $\beta=0$]\label{carlthm32}
Let us assume that \eqref{assA1}, \eqref{assA3}, \eqref{assA4}, and \eqref{assA5}  hold. Then, there exists a constant $C>0$ and a large $\overline{\lambda}\geq1$ depending only on $G$, $G_0$, $\mathcal{O}_1$, $\mathcal{O}_2$, $\overline{\mu}$, $T$, $\sigma_{ij}^m$, $\sigma_0$, $a_0$,  $a_{ij}$, $b_{ij}$, and $B_{ij}$ such that for all $\lambda\geq\overline{\lambda}$, and $q_0,v_0\in L^2_{\mathcal{F}_0}(\Omega;L^2(G))$, the associated solution $((h,k;H,K);(q,v))$ of the systems \eqref{adback4.77}-\eqref{adjoforr4.8} satisfies that
\begin{align}\label{carlestcasca}
\begin{aligned}
    &\, \mathcal{I}(2,h) + \mathcal{I}(2,k) + \mathcal{I}(3,q) + \mathcal{I}(3,v) \\
    & \leq C \,\mathbb{E} \iint_Q \Big\{ \lambda^{178}  \theta^2 \gamma^{178} h^2 \chi_{G_0} + \lambda^{88} \theta^2 \gamma^{88} (H^2+K^2)\Big\} \, dx \, dt.
    \end{aligned}
\end{align}
\end{thm}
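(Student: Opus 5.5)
Throughout, we work with $\beta=0$. The system \eqref{adback4.77}--\eqref{adjoforr4.8} then becomes a cascade. The forward pair $(q,v)$ solves \eqref{adjoforr4.8}, in which $B_{21}=b_{21}=0$ by \eqref{assA5}--\eqref{assA4}. It drives $k$ through the term $\chi_{\mathcal O_1}v-\nabla\cdot(\chi_{\mathcal O_2}\nabla v)$, and $k$ in turn feeds $h$ through $-a_{21}k$.

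The plan is to apply the scalar estimates with suitably chosen powers $d$, sum them, and then successively remove the local terms on $\mathcal B$. Only the local $h$-term on $G_0$ and the diffusion terms $H,K$ should survive.

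\textbf{Step 1 (local sets).} We choose nested open sets $\mathcal B_0\Subset\mathcal B_1\Subset\cdots\Subset\widetilde G_0$. By \eqref{assA1}, we may also assume, after shrinking, that $\overline{\mathcal B_j}\subset (G_0\cap\mathcal O_1)\setminus\overline{\mathcal O_2}$. This is possible because $G_0\cap\mathcal O_1\setminus\overline{\mathcal O_2}$ has nonempty interior, and, as is usual in such proofs, we take $\widetilde G_0$ inside this set. Since $\mathcal B_j$ avoids $\mathcal O_2$, the second-order coupling vanishes there, and on $\mathcal B_j$ the $k$-equation reads $dk+\mathcal L_2k\,dt=[\dots-v]\,dt+K\,dW$.

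\textbf{Step 2 (scalar estimates).} We apply Lemma~\ref{thm3.3} to $q$ and to $v$ with $d=3$. The couplings $a_{12}v$, $B_{12}\cdot\nabla v$, $b_{12}v$ and their analogues are absorbed for $\lambda$ large, since they come with the lower powers $\lambda^{0}$ and $\lambda^{2}$. This gives $\mathcal I(3,q)+\mathcal I(3,v)\lesssim$ local terms of $q,v$ on $\mathcal B_0$.

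We apply Lemma~\ref{thmm3.1cab} to $h$ and $k$ with $d=2$. Writing the divergence terms as $\nabla\cdot F$, the source $\chi_{\mathcal O_1}v$ costs $\lambda^{-1}\gamma^{-1}v^2$. The term $\nabla\cdot(\chi_{\mathcal O_2}\nabla v)$, placed in $F$, costs $\lambda\gamma|\nabla v|^2$, which is dominated by the $|\nabla v|^2$ part of $\mathcal I(3,v)$, namely $\lambda\gamma|\nabla v|^2$. The constants here must be tracked, so instead we take $d=2$ for $(h,k)$ and multiply the $(q,v)$ estimate by a large constant. The $H,K$ terms appear at power $\lambda^{1}$.

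This leaves local terms in $q$, $v$ and $k$ on $\mathcal B_0$ to eliminate.

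\textbf{Step 3 (removing the local $q$, $v$, $k$ terms).} This is the main obstacle. We use cut-offs $\zeta_j\in C_c^\infty(\mathcal B_{j+1})$ with $\zeta_j=1$ on $\mathcal B_j$, and weights $w_j=\lambda^{m_j}\theta^2\gamma^{m_j}$. The procedure has three parts.

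\emph{(a)} Itô's formula applied to $d(w\zeta\, k v)$ turns $\mathbb E\iint w\zeta v^2$ into terms in $k\,\mathcal L v$ and $k\,q$. The $q$-term arises from $a_{21}q$ in the $v$-equation, and $B_{21}$ is absent by \eqref{assA5}. The Itô correction is $K(b_{21}q+b_{22}v)$, whose $q$-part vanishes by \eqref{assA4}. This bounds the local $v$-term by $\varepsilon\,\mathcal I(3,v)$ plus higher-power local terms in $k$, plus $K$ and $q$ terms.

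\emph{(b)} Using \eqref{assA3}, $a_{21}$ has a fixed sign on $\widetilde G_0$. Itô's formula for $d(w\zeta\, h q)$ produces $\mathbb E\iint w\zeta a_{21}kq$, since $-a_{21}k$ appears in the $h$-equation. It also produces $-a_{21}q\cdot h$-type terms, and the $q$-equation contributes $q$ and $v$ pieces. Combined with the analogous identity for $d(w\zeta\, k q)$, whose source contains $-v\,q$, this lets us express the local $k$-term via $h$, $H$, and small multiples of the global left-hand norms. Concretely, the $h$-equation gives $a_{21}k=-dh/dt-\mathcal L_1h+\dots$, and we test it against $w\zeta a_{21}k$ through $d(w\zeta hk)$. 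The stochastic correction is $HK$, which is harmless since $H$ and $K$ are on the right-hand side. The divergence term $\nabla\cdot(kB_{21})$ vanishes by \eqref{assA5}.

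\emph{(c)} The local $q$-term is handled in the same way, via $d(w\zeta\, h q)$ and the coupling $-a_{21}k$ together with the already controlled $k$ and $v$.

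Each integration by parts of $\mathcal L_m$ against a weight costs derivatives, namely factors $\lambda\gamma$ from $\nabla\theta^2$ and $\partial_t\theta^2$. We therefore apply Young's inequality with $\varepsilon$ repeatedly and raise the exponent $m_j$ at each stage. The successive exponents grow, and the paper's final powers $178$ and $88$ reflect this bookkeeping. We will simply carry general exponents and choose them at the end.

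\textbf{Step 4 (conclusion).} We absorb all $\varepsilon$-terms into $\mathcal I(2,h)+\mathcal I(2,k)+\mathcal I(3,q)+\mathcal I(3,v)$. We then bound $\theta^2\gamma^{m}$ uniformly in the $H,K$ terms, take $\lambda\ge\overline\lambda$, and enlarge the powers to those in \eqref{carlestcasca}, using that $\gamma$ is bounded below by \eqref{2.301}.
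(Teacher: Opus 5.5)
\textbf{Verdict: genuine gap.} Most of your architecture matches the paper, but your part (c) uses the wrong identity and the wrong elimination order. The matching parts are: the Carleman estimate with $d=3$ for $(q,v)$; the one with $d=2$ for $(h,k)$, where the $(q,v)$ estimate is multiplied by a large constant to absorb $\lambda\theta^2\gamma|\nabla v|^2$; and the removal of local terms by It\^o's formula on cut-off weighted products. Your parts (a) and (b) correspond to the paper's $d(w\zeta kv)$ and $d(w\zeta hk)$ computations.

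\textbf{Part (c) fails.} The local $q$-term cannot be removed through $d(w\zeta hq)$. For $\beta=0$ the $h$-equation has no $q$-source. Using (A4)--(A5), the $a_{11}$, $b_{11}$, $B_{11}$ contributions cancel and
\[
d(hq)=\bigl[h\mathcal L_1q-q\mathcal L_1h-a_{21}kq+a_{12}hv+hB_{12}\cdot\nabla v+b_{12}Hv+\nabla\cdot(hqB_{11})\bigr]\,dt+(\cdots)\,dW(t).
\]
This is just the duality pairing and contains no term quadratic in $q$. Nothing in it controls $\lambda^3\mathbb E\int_0^T\int_{\mathcal B_0}\theta^2\gamma^3q^2\,dx\,dt$.

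Since $q$ reaches the observed component only along $q\to v\to k\to h$, the local $q$-term must be traded through the coupling $a_{21}q$ in the $v$-equation:
\begin{itemize}
\item By (A3), the local $q$-term is bounded by $\mathbb E\iint_Q w_3\zeta a_{21}q^2\,dx\,dt$.
\item The identity for $d(w_3\zeta qv)$ turns this into $\varepsilon(\mathcal I(3,q)+\mathcal I(3,v))$ plus local terms $\lambda^7\theta^2\gamma^7v^2$ and $\lambda^5\theta^2\gamma^5|\nabla v|^2$.
\item This is exactly where (A4)--(A5) are indispensable. They remove $w_3\zeta\,qB_{21}\cdot\nabla q$, which would return a local $q$-term at a higher power. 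They also remove the It\^o correction $w_3\zeta\,b_{11}b_{21}q^2$, which returns one at the same power with a coefficient not small compared with $a_0$.
\end{itemize}
Because this step creates new local $v$-terms at higher powers, it must come first, before your part (a). In your order, the $v$-terms you treat as ``already controlled'' at power $\lambda^3$ reappear at power $\lambda^7$, so the chain does not close.

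\textbf{Second omission: localized gradient terms.} Your integrations by parts produce localized gradient terms that your proposal never removes. Examples are $\lambda^5\theta^2\gamma^5|\nabla v|^2$ from $d(w_3\zeta qv)$, and $\lambda^{m}\theta^2\gamma^{m}|\nabla h|^2$ from the cross term $w\zeta\,\sigma\nabla h\cdot\nabla k$ in $d(w\zeta hk)$.
\begin{itemize}
\item These cannot be absorbed, since $\mathcal I(3,\cdot)$ and $\mathcal I(2,\cdot)$ carry only $\lambda\gamma$ and $(\lambda\gamma)^0$ in front of the gradient. Raising exponents does not help.
\item You need Caccioppoli-type identities $d(w\zeta v^2)$ and $d(w\zeta h^2)$ to convert them into local zeroth-order terms. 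You also need $d(w\zeta k^2)$ if a local $\nabla k$-term survives your part (a).
\item In the $d(w\zeta v^2)$ step, the It\^o correction $w\zeta(b_{21}q+b_{22}v)^2$ is a second place where (A4) is needed.
\end{itemize}
The order of elimination that closes is $q$, then $\nabla v$, then $v$ (which produces local $h$, $k$, $\nabla k$), then $\nabla k$, then $k$ (which produces local $\nabla h$), then $\nabla h$. This is what generates the exponents $178$ and $88$.
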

\begin{proof}
For clarity, we organize the proof into seven steps.\\
\textbf{Step 1. Some notations and preliminaries.}\\
According to the assumption \eqref{assA1}, we set the nonempty subsets \( \widetilde{G}_i \) (\( 1 \leq i \leq 6 \)) such that
\begin{equation}\label{assuponGitilde}
\widetilde{G}_6 \Subset \widetilde{G}_5 \Subset \widetilde{G}_4 \Subset \widetilde{G}_3 \Subset \widetilde{G}_2 \Subset \widetilde{G}_1 \Subset \widetilde{G}_0\subset G_0 \cap \mathcal{O}_1,\quad\textnormal{and}\quad \widetilde{G}_0\cap\mathcal{O}_2=\emptyset,
\end{equation}
where \( \widetilde{G}_0 \) denotes the set chosen in accordance with  \eqref{assA3}.
We also define the functions \( \zeta_i \in C^{\infty}(\mathbb{R}^N) \) satisfying the following properties
\begin{align*}
\begin{aligned}
& 0 \leq \zeta_i \leq 1, \quad \zeta_i = 1 \,\, \text{in} \,\, \widetilde{G}_{7-i}, \quad \text{Supp}(\zeta_i) \subset \widetilde{G}_{6-i}, \\ 
& \frac{\Delta \zeta_i}{\zeta_i^{1/2}} \in L^\infty(G), \quad \frac{\nabla \zeta_i}{\zeta_i^{1/2}} \in L^\infty(G; \mathbb{R}^N), \quad i=1, 2,\dots, 6.
\end{aligned}
\end{align*}
Such functions \( \zeta_i \) exist (see, e.g., \cite{Tereza2000}). Next, set \( w_l = \theta^2 (\lambda \gamma)^l \) (with \( l \in \mathbb{N} \)), and observe that for large \( \lambda \), we have the following estimates
\begin{align}\label{esttmforT}
|\partial_t w_l| \leq C \theta^2 (\lambda \gamma)^{l+2}, \qquad |\nabla(w_l \zeta_i)| \leq C \theta^2 (\lambda \gamma)^{l+1} \zeta_i^{1/2}, \qquad i = 1, 2,\dots, 6.
\end{align}
Using the Carleman estimate \eqref{carfor5.6new} for the two equations of the system \eqref{adjoforr4.8} with \( \mathcal{B} = \widetilde{G}_6 \) and \( d = 3 \), and taking a large \( \lambda \), we deduce that
\begin{align}\label{estimm5.4}
\begin{aligned}
\mathcal{I}(3,q) + \mathcal{I}(3,v) \leq C \lambda^3 \mathbb{E} \int_0^T \int_{\widetilde{G}_6} \theta^2 \gamma^3 (q^2+v^2) \, dx \, dt.
\end{aligned}
\end{align}
\textbf{Step 2. Eliminating the localized integral associated to the state $q$.}\\
Recalling the assumption \eqref{assA3}, and for simplicity,  we henceforth assume that the term \( a_{21} \) satisfies
\begin{align}\label{asspa21part}
a_{21} \geq a_0 > 0 \quad \textnormal{in} \quad (0,T) \times \widetilde{G}_0, \quad \textnormal{a.s.}
\end{align}
Then notice that
\begin{align}\label{ineeforqg5}
a_0 \lambda^3 \mathbb{E} \int_0^T \int_{\widetilde{G}_6} \theta^2 \gamma^3 q^2 \, dx \, dt \leq \mathbb{E} \iint_Q w_3 \zeta_1 a_{21} q^2 \, dx \, dt.
\end{align}
Using the system \eqref{adjoforr4.8}, we compute \( d(w_3 \zeta_1 q v) \), and then we obtain
\begin{align}\label{ineqq3.155}
    \begin{aligned}
    \mathbb{E} \iint_Q w_3 \zeta_1 a_{21} q^2 \, dx \, dt &= -\mathbb{E} \iint_Q (w_3 \zeta_1)_t q v \, dx \, dt + \sum_{i,j=1}^N \mathbb{E} \iint_Q \sigma_{ij}^2 \frac{\partial (w_3 \zeta_1 q)}{\partial x_i} \frac{\partial v}{\partial x_j} \, dx \, dt \\
    &\quad - \mathbb{E} \iint_Q w_3 \zeta_1 a_{22} q v \, dx \, dt- \mathbb{E} \iint_Q w_3 \zeta_1 q (B_{21}\cdot\nabla q +B_{22}\cdot\nabla v)\, dx \, dt \\
    &\quad+ \sum_{i,j=1}^N \mathbb{E} \iint_Q \sigma_{ij}^1 \frac{\partial (w_3 \zeta_1 v)}{\partial x_i} \frac{\partial q}{\partial x_j} \, dx \, dt - \mathbb{E} \iint_Q w_3 \zeta_1 a_{11} q v \, dx \, dt \\
    &\quad - \mathbb{E} \iint_Q w_3 \zeta_1 a_{12} v^2 \, dx \, dt - \mathbb{E} \iint_Q w_3 \zeta_1 v (B_{11}\cdot\nabla q +B_{12}\cdot\nabla v )\, dx \, dt\\
    &\quad - \mathbb{E} \iint_Q w_3 \zeta_1 (b_{11}q+b_{12}v)(b_{21}q+b_{22}v) \,dx\,dt.
    \end{aligned}
\end{align}
Fix $\varepsilon>0$. Since all the terms in the right-hand side of \eqref{ineqq3.155} can be treated in a similar manner, then by using \eqref{esttmforT}, \eqref{assA4} and \eqref{assA5} together with Young's inequality, we directly obtain for a large $\lambda$,
\begin{align}\label{inqqM1m}
\begin{aligned}
\mathbb{E} \iint_Q w_3 \zeta_1 a_{21} q^2 \, dx \, dt \leq C\varepsilon& \mathcal{I}(3,q)+C\varepsilon \mathcal{I}(3,v) \\
&+ \frac{C}{\varepsilon} \bigg[\lambda^7 \mathbb{E} \int_0^T \int_{\widetilde{G}_5} \theta^2 \gamma^7 v^2 \, dx \, dt+\lambda^5 \mathbb{E} \int_0^T \int_{\widetilde{G}_5} \theta^2 \gamma^5 |\nabla v|^2 \, dx \, dt\bigg].
\end{aligned}
\end{align}
Combining \eqref{estimm5.4}, \eqref{ineeforqg5}, \eqref{inqqM1m}, and taking a sufficiently small \(\varepsilon\), we obtain the following estimate
\begin{align}\label{estimm5.4newsec}
\begin{aligned}
    \mathcal{I}(3,q) + \mathcal{I}(3,v) \leq C \bigg[ \lambda^7 \mathbb{E} \int_0^T \int_{\widetilde{G}_5} \theta^2 \gamma^7 v^2 \, dx \, dt + \lambda^5 \mathbb{E} \int_0^T \int_{\widetilde{G}_5} \theta^2 \gamma^5 |\nabla v|^2 \, dx \, dt \bigg].
\end{aligned}
\end{align}
\textbf{Step 3. Eliminating the localized integral associated with $\nabla v$.}\\
From \eqref{assmponalpha}, we have that
\begin{align}\label{ineqq3.24forgrdv}
\sigma_0\lambda^5\mathbb{E}\int_0^T\int_{\widetilde{G}_5} \theta^2\gamma^5 |\nabla v|^2\,dx\,dt\leq \mathbb{E}\iint_Q w_5\zeta_2 \sum_{i,j=1}^N \sigma_{ij}^2\frac{\partial v}{\partial x_i}\frac{\partial v}{\partial x_j}\,dx\,dt.
\end{align}
By the second equation of the system \eqref{adjoforr4.8}, and by computing \( d(w_5 \zeta_2 v^2) \), we find that
\begin{align}
    \begin{aligned}
    &2\mathbb{E}\iint_Q w_5 \zeta_2 \sum_{i,j=1}^N \sigma_{ij}^2 \frac{\partial v}{\partial x_i} \frac{\partial v}{\partial x_j} \, dx \, dt\\ 
    &= \mathbb{E} \iint_Q (w_5 \zeta_2)_t v^2 \, dx \, dt - 2 \sum_{i,j=1}^N \mathbb{E} \iint_Q \sigma_{ij}^2 v \frac{\partial (w_5 \zeta_2)}{\partial x_i} \frac{\partial v}{\partial x_j} \, dx \, dt \\
    &\quad + 2 \mathbb{E} \iint_Q w_5 \zeta_2 a_{21} q v \, dx \, dt + 2 \mathbb{E} \iint_Q w_5 \zeta_2 a_{22} v^2 \, dx \, dt \\
   &\quad + 2 \mathbb{E} \iint_Q w_5 \zeta_2 v(B_{21}\cdot\nabla q+B_{22}\cdot\nabla v) \, dx \, dt\\
   &\quad+ \mathbb{E} \iint_Q w_5 \zeta_2 (b_{21}q+b_{22} v)^2 \, dx \, dt.
    \end{aligned}
\end{align}
Let $\varepsilon>0$. Proceeding as before and using \eqref{esttmforT}, \eqref{assA4}, \eqref{assA5} and Young's inequality, and taking a large $\lambda$, we derive that
\begin{align}\label{inneqqM1r2}
2\mathbb{E}\iint_Q w_5 \zeta_2 \sum_{i,j=1}^N \sigma_{ij}^2 \frac{\partial v}{\partial x_i} \frac{\partial v}{\partial x_j} \, dx \, dt \leq C\varepsilon \mathcal{I}(3,v)+C\varepsilon \mathcal{I}(3,q) + \frac{C}{\varepsilon} \lambda^{11} \mathbb{E} \int_0^T \int_{\widetilde{G}_4} \theta^2 \gamma^{11} v^2 \, dx \, dt.
\end{align}
Combining \eqref{estimm5.4newsec}, \eqref{ineqq3.24forgrdv}, \eqref{inneqqM1r2}, and choosing a small \(\varepsilon\), we get
\begin{align}\label{estimmsecnew3.25}
    \begin{aligned}
    \mathcal{I}(3,q) + \mathcal{I}(3,v) \leq C \lambda^{11} \mathbb{E} \int_0^T \int_{\widetilde{G}_4} \theta^2 \gamma^{11} v^2 \, dx \, dt. 
    \end{aligned}
\end{align}
\textbf{Step 4. Passing from the state \( v \) to the states \( h \) and \( k \).}\\
Here, we will use the coupling term $(\chi_{\mathcal{O}_1}v+\nabla\cdot(\chi_{\mathcal{O}_2} v))$ between the equations \eqref{adback4.77} and \eqref{adjoforr4.8}, we first note that
\begin{align}\label{estineqq3.32e}
    \lambda^{11} \mathbb{E} \int_0^T \int_{\widetilde{G}_4} \theta^2 \gamma^{11} v^2 \, dx \, dt \leq \mathbb{E} \iint_Q w_{11} \zeta_3 v^2 \, dx \, dt.
\end{align}
Using Itô's formula for \( d(w_{11} \zeta_3 k v) \), we obtain that
\begin{align}\label{firsestine3.14}
    \begin{aligned}
    \mathbb{E} \iint_Q w_{11} \zeta_3 v^2 \, dx \, dt 
    &= \mathbb{E} \iint_Q (w_{11} \zeta_3)_t k v \, dx \, dt + \mathbb{E} \iint_Q w_{11} \zeta_3 a_{21} q k \, dx \, dt \\
    &\quad + \mathbb{E} \iint_Q w_{11} \zeta_3  k(B_{21}\cdot\nabla q+B_{22}\cdot\nabla v) \, dx \, dt\\
    &\quad+\sum_{i,j=1}^N \mathbb{E} \iint_Q \sigma_{ij}^2  \frac{\partial (w_{11} \zeta_3v)}{\partial x_i} \frac{\partial k}{\partial x_j} \, dx \, dt-\sum_{i,j=1}^N \mathbb{E} \iint_Q \sigma_{ij}^2 \frac{\partial (w_{11} \zeta_3k)}{\partial x_i} \frac{\partial v}{\partial x_j} \, dx \, dt\\
    &\quad - \mathbb{E} \iint_Q w_{11} \zeta_3 a_{12} h v \, dx \, dt - \mathbb{E} \iint_Q w_{11} \zeta_3 v(b_{12}H+b_{22}K) \, dx \, dt\\
    &\quad- \mathbb{E} \iint_Q (hB_{12}+kB_{22})\cdot\nabla(w_{11}\zeta_3v)  \, dx \, dt+ \mathbb{E} \iint_Q \chi_{\mathcal{O}_2} \nabla(w_{11}\zeta_3v) \cdot\nabla v  \, dx \, dt\\
    &\quad+ \mathbb{E} \iint_Q w_{11}\zeta_3(b_{21}q+b_{22}v)K   \, dx \, dt.
    \end{aligned}
\end{align}
For any $\varepsilon>0$, arguing as above, using the second assumption of \eqref{assuponGitilde} and choosing a large $\lambda$, we obtain the following estimate
\begin{align}\label{ineqqneshkk}
\begin{aligned}
\mathbb{E} \iint_Q w_{11} \zeta_3 v^2 \, dx \, dt \leq C\varepsilon& \mathcal{I}(3,v) +C\varepsilon \mathcal{I}(3,q) + \frac{C}{\varepsilon} \bigg[\lambda^{21} \mathbb{E} \iint_Q \theta^2 \gamma^{21} \zeta_3 h^2 \, dx \, dt \\
&+\lambda^{23} \mathbb{E} \iint_Q \theta^2 \gamma^{23} \zeta_3 k^2 \, dx \, dt + \lambda^{21} \mathbb{E} \iint_Q \theta^2 \gamma^{21} \zeta_3 |\nabla k|^2 \, dx \, dt\\
&+ \lambda^{19} \mathbb{E} \iint_Q \theta^2 \gamma^{19} H^2 \, dx \, dt+ \lambda^{19} \mathbb{E} \iint_Q \theta^2 \gamma^{19} K^2 \, dx \, dt\bigg].
\end{aligned}
\end{align}
Combining \eqref{estimmsecnew3.25}, \eqref{estineqq3.32e}, \eqref{ineqqneshkk}, and taking a small \(\varepsilon\), we conclude that
\begin{align}\label{esecesttsecnew3.25}
\begin{aligned}
    \mathcal{I}(3,q) + \mathcal{I}(3,v) \leq C \bigg[ &\lambda^{21} \mathbb{E} \int_0^T \int_{\widetilde{G}_3} \theta^2 \gamma^{21} h^2 \, dx \, dt + \lambda^{23} \mathbb{E} \int_0^T \int_{\widetilde{G}_3} \theta^2 \gamma^{23} k^2 \, dx \, dt\\
    &+\lambda^{21} \mathbb{E} \int_0^T \int_{\widetilde{G}_3} \theta^2 \gamma^{21} |\nabla k|^2 \, dx \, dt+ \lambda^{19} \mathbb{E} \iint_Q \theta^2 \gamma^{19} H^2 \, dx \, dt\\
    &+ \lambda^{19} \mathbb{E} \iint_Q \theta^2 \gamma^{19} K^2 \, dx \, dt\bigg].
\end{aligned}
\end{align}
We now apply the Carleman estimate \eqref{3.22carlemgenBack} to the system \eqref{adback4.77}, with \(\mathcal{B} = \widetilde{G}_3\) and \(d = 2\). This leads to the existence of a constant \(C > 0\) and a sufficiently large \(\lambda\) such that
\begin{align}\label{estt4.3f}
    \begin{aligned}
\mathcal{I}(2,h) + \mathcal{I}(2,k) \leq C \bigg[ &\lambda^2 \mathbb{E} \int_0^T \int_{\widetilde{G}_3} \theta^2 \gamma^2 h^2 \, dx \, dt + \lambda^2 \mathbb{E} \int_0^T \int_{\widetilde{G}_3} \theta^2 \gamma^2 k^2 \, dx \, dt \\
& + \lambda^{-1}\mathbb{E} \iint_Q \theta^2\gamma^{-1} v^2 \, dx \, dt+ \lambda\mathbb{E} \iint_Q \theta^2\gamma |\nabla v|^2 \, dx \, dt \\
&+ \lambda \mathbb{E} \iint_Q \theta^2 \gamma H^2 \, dx \, dt+ \lambda \mathbb{E} \iint_Q \theta^2 \gamma K^2 \, dx \, dt \bigg].
    \end{aligned}
\end{align}
Combining \eqref{estt4.3f} and \eqref{esecesttsecnew3.25}, and taking a large \(\lambda\), we get that
\begin{align}\label{estt4.3fnewone}
    \begin{aligned}
    &\,\mathcal{I}(2,h) + \mathcal{I}(2,k) + \mathcal{I}(3,q) + \mathcal{I}(3,v) \\
    &\leq C \bigg[\lambda^{21} \mathbb{E} \int_0^T \int_{\widetilde{G}_3} \theta^2 \gamma^{21} h^2 \, dx \, dt + \lambda^{23} \mathbb{E} \int_0^T \int_{\widetilde{G}_3} \theta^2 \gamma^{23} k^2 \, dx \, dt \\
    & \hspace{1cm}+ \lambda^{19} \mathbb{E} \iint_Q \theta^2 \gamma^{19} H^2 \, dx \, dt + \lambda^{19} \mathbb{E} \iint_Q \theta^2 \gamma^{19} K^2 \, dx \, dt\\
    &\hspace{1cm}+\lambda^{21} \mathbb{E} \int_0^T \int_{\widetilde{G}_3} \theta^2 \gamma^{21} |\nabla k|^2 \, dx \, dt\bigg].
    \end{aligned}
\end{align}
\textbf{Step 5. Eliminating the localized integral associated with \(\nabla k\).}\\
Notice that
\begin{align}\label{esiforgradh2}
    \sigma_0 \lambda^{21} \mathbb{E} \int_0^T \int_{\widetilde{G}_3} \theta^2 \gamma^{21} |\nabla k|^2 \, dx \, dt \leq \mathbb{E} \iint_Q w_{21} \zeta_4 \sum_{i,j=1}^N \sigma_{ij}^2 \frac{\partial k}{\partial x_i} \frac{\partial k}{\partial x_j} \, dx \, dt.
\end{align}
By the first equation of \eqref{adback4.77}, and computing \( d(w_{21} \zeta_4 k^2) \), we obtain
\begin{align}\label{mainestinfirst2}
    \begin{aligned}
    &2 \mathbb{E} \iint_Q w_{21} \zeta_4 \sum_{i,j=1}^N \sigma_{ij}^2 \frac{\partial k}{\partial x_i} \frac{\partial k}{\partial x_j} \, dx \, dt\\
    &= - \mathbb{E} \iint_Q (w_{21} \zeta_4)_t k^2 \, dx \, dt - 2 \sum_{i,j=1}^N \mathbb{E} \iint_Q \sigma_{ij}^2 k \frac{\partial (w_{21} \zeta_4)}{\partial x_i} \frac{\partial k}{\partial x_j} \, dx \, dt \\
    & \quad  + 2 \mathbb{E} \iint_Q w_{21} \zeta_4 a_{12} h k \, dx \, dt + 2 \mathbb{E} \iint_Q w_{21} \zeta_4 a_{22} k^2 \, dx \, dt\\
    & \quad  + 2 \mathbb{E} \iint_Q w_{21} \zeta_4 b_{12} kH \, dx \, dt + 2 \mathbb{E} \iint_Q w_{21} \zeta_4 b_{22} k K \, dx \, dt\\
    & \quad + 2\mathbb{E} \iint_Q (h B_{12}+k B_{22})\cdot\nabla(w_{21} \zeta_4 k) \, dx \, dt \\
    & \quad+2 \mathbb{E} \iint_Q w_{21} \zeta_4 k\chi_{\mathcal{O}_1}v \,dx\,dt -2 \mathbb{E} \iint_Q \chi_{\mathcal{O}_2}\nabla(w_{21} \zeta_4 k)\cdot\nabla v \, dx \, dt\\
    &\quad- \mathbb{E} \iint_Q w_{21} \zeta_4 K^2 \, dx \, dt.
    \end{aligned}
\end{align}
For any $\varepsilon>0$, using again the second assumption of \eqref{assuponGitilde}, we have for a large enough $\lambda$,
\begin{align}\label{ineqqnes336}
\begin{aligned}
    2 \mathbb{E} \iint_Q w_{21} \zeta_4 \sum_{i,j=1}^N \sigma_{ij}^2 \frac{\partial k}{\partial x_i} \frac{\partial k}{\partial x_j} \, dx \, dt\leq C\varepsilon& \mathcal{I}(2,k)+C\varepsilon \mathcal{I}(3,v)+\frac{C}{\varepsilon}\bigg[\lambda^{42}\mathbb{E}\int_0^T \int_{\widetilde{G}_2} \theta^2\gamma^{42} h^2 \,dx\,dt\\
    &+\lambda^{44}\mathbb{E}\int_0^T \int_{\widetilde{G}_2} \theta^2\gamma^{44} k^2 \,dx\,dt+\lambda^{40}\mathbb{E}\iint_Q \theta^2\gamma^{40} K^2 \,dx\,dt\\
    &+\lambda^{40}\mathbb{E}\iint_Q \theta^2\gamma^{40} H^2 \,dx\,dt\bigg].
    \end{aligned}
\end{align}
Combining \eqref{estt4.3fnewone}, \eqref{esiforgradh2}, \eqref{mainestinfirst2}, \eqref{ineqqnes336}, choosing a small $\varepsilon$, we get that
\begin{align}\label{estt4.3fnewone2}
    \begin{aligned}
    &\,\mathcal{I}(2,h) + \mathcal{I}(2,k) + \mathcal{I}(3,q) + \mathcal{I}(3,v) \\
    &\leq C \bigg[\lambda^{42} \mathbb{E} \int_0^T \int_{\widetilde{G}_2} \theta^2 \gamma^{42} h^2 \, dx \, dt + \lambda^{44} \mathbb{E} \int_0^T \int_{\widetilde{G}_2} \theta^2 \gamma^{44} k^2 \, dx \, dt \\
    & \hspace{1cm}+ \lambda^{40} \mathbb{E} \iint_Q \theta^2 \gamma^{40} H^2 \, dx \, dt + \lambda^{40} \mathbb{E} \iint_Q \theta^2 \gamma^{40} K^2 \, dx \, dt\bigg].
    \end{aligned}
\end{align}
\textbf{Step 6. Eliminating the localized integral associated to the state $k$.}\\
From \eqref{asspa21part}, we observe that
\begin{align}\label{firstestfork}
    a_0 \lambda^{44} \mathbb{E} \int_0^T \int_{\widetilde{G}_2} \theta^2 \gamma^{44} k^2 \, dx \, dt \leq \mathbb{E} \iint_Q w_{44} \zeta_5 a_{21} k^2 \, dx \, dt.
\end{align}
Using the system \eqref{adback4.77} and applying Itô's formula for \( d(w_{44} \zeta_5 h k) \), we arrive at
\begin{align}\label{mainestinfirst}
    \begin{aligned}
    \mathbb{E} \iint_Q w_{44} \zeta_5 a_{21} k^2 \, dx \, dt
    &= \mathbb{E} \iint_Q (w_{44} \zeta_5)_t h k \, dx \, dt + \sum_{i,j=1}^N \mathbb{E} \iint_Q \sigma_{ij}^1 \frac{\partial (w_{44} \zeta_5 k)}{\partial x_i} \frac{\partial h}{\partial x_j} \, dx \, dt \\
    & \quad - \mathbb{E} \iint_Q w_{44} \zeta_5 a_{11} h k \, dx \, dt- \mathbb{E} \iint_Q (h B_{11}+kB_{21})\cdot\nabla(w_{44} \zeta_5 k) \, dx \, dt \\
    & \quad + \sum_{i,j=1}^N \mathbb{E} \iint_Q \sigma_{ij}^2 \frac{\partial (w_{44} \zeta_5 h)}{\partial x_i} \frac{\partial k}{\partial x_j} \, dx \, dt - \mathbb{E} \iint_Q w_{44} \zeta_5 a_{12} h^2 \, dx \, dt\\
    &\quad - \mathbb{E} \iint_Q w_{44} \zeta_5 h(b_{12}H+b_{22}K)\, dx \, dt- \mathbb{E} \iint_Q w_{44} \zeta_5 k (b_{11}H+b_{21}K)\, dx \, dt\\
    & \quad - \mathbb{E} \iint_Q w_{44} \zeta_5 a_{22} h k \, dx \, dt  - \mathbb{E} \iint_Q (h B_{12}+kB_{22})\cdot\nabla(w_{44} \zeta_5 h) \, dx \, dt\\
    & \quad - \mathbb{E} \iint_Q w_{44} \zeta_5 \chi_{\mathcal{O}_1} v h \, dx \, dt+ \mathbb{E} \iint_Q \chi_{\mathcal{O}_2} \nabla(w_{44} \zeta_5 h)\cdot \nabla v \, dx \, dt\\
    &\quad + \mathbb{E} \iint_Q w_{44} \zeta_5 H K \, dx \, dt.\\
    \end{aligned}
\end{align}
Fix $\varepsilon > 0$. By \eqref{esttmforT} along with Young's inequality, \eqref{assA4}, \eqref{assA5} and the second assumption of \eqref{assuponGitilde}, then by taking a large enough $\lambda$, we derive the following estimate
\begin{align}\label{ineqq3.2f}
\begin{aligned}
\mathbb{E} \iint_Q w_{44} \zeta_4 a_{21} k^2 \, dx \, dt \leq C\varepsilon& \mathcal{I}(2,k)+C\varepsilon \mathcal{I}(2,h)+C\varepsilon \mathcal{I}(3,v) \\
 &+ \frac{C}{\varepsilon}\bigg[\lambda^{90} \mathbb{E} \int_0^T \int_{\widetilde{G}_1} \theta^2 \gamma^{90} h^2 \, dx \, dt+\lambda^{88} \mathbb{E} \int_0^T \int_{\widetilde{G}_1} \theta^2 \gamma^{88} |\nabla h|^2 \, dx \, dt\\
 &\qquad\qquad+\lambda^{86} \mathbb{E} \iint_Q \theta^2 \gamma^{86} H^2 \, dx \, dt + \lambda^{44} \mathbb{E} \iint_Q \theta^2 \gamma^{44} K^2 \, dx \, dt\bigg].
 \end{aligned}
\end{align}
Combining \eqref{estt4.3fnewone}, \eqref{firstestfork}, \eqref{mainestinfirst}, \eqref{ineqq3.2f}, and selecting a small \(\varepsilon\), we obtain that
\begin{align}\label{estt4.3fnewonelast}
    \begin{aligned}
    &\, \mathcal{I}(2,h) + \mathcal{I}(2,k) + \mathcal{I}(3,q) + \mathcal{I}(3,v) \\
    & \leq C \bigg[ \lambda^{90} \mathbb{E} \int_0^T \int_{\widetilde{G}_1} \theta^2 \gamma^{90} h^2 \, dx \, dt + \lambda^{86} \mathbb{E} \iint_Q \theta^2 \gamma^{86} H^2 \, dx \, dt \\
    & \hspace{1cm} + \lambda^{44} \mathbb{E} \iint_Q \theta^2 \gamma^{44} K^2 \, dx \, dt + \lambda^{88} \mathbb{E} \int_0^T \int_{\widetilde{G}_1} \theta^2 \gamma^{88} |\nabla h|^2 \, dx \, dt\bigg].
    \end{aligned}
\end{align}
\textbf{Step 7. Eliminating the localized integral associated with \(\nabla h\).}\\
Notice that
\begin{align}\label{esiforgradh}
    \sigma_0 \lambda^{88} \mathbb{E} \int_0^T \int_{\widetilde{G}_1} \theta^2 \gamma^{88} |\nabla h|^2 \, dx \, dt \leq \mathbb{E} \iint_Q w_{88} \zeta_6 \sum_{i,j=1}^N \sigma_{ij}^1 \frac{\partial h}{\partial x_i} \frac{\partial h}{\partial x_j} \, dx \, dt.
\end{align}
By the first equation of \eqref{adback4.77}, and computing \( d(w_{88} \zeta_6 h^2) \), we obtain
\begin{align}\label{mainestinfirst2sec}
    \begin{aligned}
    &2 \mathbb{E} \iint_Q w_{88} \zeta_6 \sum_{i,j=1}^N \sigma_{ij}^1 \frac{\partial h}{\partial x_i} \frac{\partial h}{\partial x_j} \, dx \, dt\\
    &= - \mathbb{E} \iint_Q (w_{88} \zeta_6)_t h^2 \, dx \, dt - 2 \sum_{i,j=1}^N \mathbb{E} \iint_Q \sigma_{ij}^1 h \frac{\partial (w_{88} \zeta_6)}{\partial x_i} \frac{\partial h}{\partial x_j} \, dx \, dt \\
    & \quad + 2 \mathbb{E} \iint_Q w_{88} \zeta_6 a_{11} h^2 \, dx \, dt + 2 \mathbb{E} \iint_Q w_{88} \zeta_6 a_{21} h k \, dx \, dt \\
    & \quad + 2 \mathbb{E} \iint_Q w_{88} \zeta_6 h(b_{11}H+b_{21}K) \, dx \, dt + 2\mathbb{E} \iint_Q (h B_{11}+kB_{21})\cdot\nabla(w_{88} \zeta_6 h) \, dx \, dt \\
    & \quad- \mathbb{E} \iint_Q w_{88} \zeta_6 H^2 \, dx \, dt.
    \end{aligned}
\end{align}
For any $\varepsilon>0$, recalling \eqref{esttmforT}, using Young's inequality and \eqref{assA5}, and choosing a large $\lambda$, we obtain that
\begin{align}\label{ineqqse343l}
\begin{aligned}
&2 \mathbb{E} \iint_Q w_{88} \zeta_6 \sum_{i,j=1}^N \sigma_{ij}^1 \frac{\partial h}{\partial x_i} \frac{\partial h}{\partial x_j} \, dx \, dt\\
&\leq C\varepsilon \mathcal{I}(2,h)+C\varepsilon \mathcal{I}(2,k)+ \frac{C}{\varepsilon} \bigg[\lambda^{178} \mathbb{E} \int_0^T \int_{\widetilde{G}_0} \theta^2 \gamma^{178} h^2 \, dx \, dt+\lambda^{88} \mathbb{E} \iint_Q \theta^2 \gamma^{88} (H^2+K^2) \, dx \, dt\bigg].
\end{aligned}
\end{align}
Combining \eqref{estt4.3fnewonelast}, \eqref{esiforgradh}, \eqref{mainestinfirst2sec}, \eqref{ineqqse343l}, and choosing a small \(\varepsilon\), we conclude that
\begin{align*}
    \begin{aligned}
    &\, \mathcal{I}(2,h) + \mathcal{I}(2,k) + \mathcal{I}(3,q) + \mathcal{I}(3,v) \\
    & \leq C \bigg[ \lambda^{178} \mathbb{E} \int_0^T \int_{\widetilde{G}_0} \theta^2 \gamma^{178} h^2 \, dx \, dt + \lambda^{88} \mathbb{E} \iint_Q \theta^2 \gamma^{88} (H^2+K^2) \, dx \, dt \bigg],
    \end{aligned}
\end{align*}
which gives the desired Carleman estimate \eqref{carlestcasca}. This completes the proof of Theorem \ref{carlthm32}.
\end{proof}
\subsection{Global Carleman Estimate for the Case $\beta\in(0,1)$}
In this subsection, we prove the following Carleman estimate for the coupled system \eqref{adback4.77}--\eqref{adjoforr4.8}.

\begin{thm}[Carleman estimate with $\beta\in(0,1)$]\label{carlthm3233.4}
Let us assume that \eqref{assA2} holds. Then, there exists a constant $C>0$ and a large $\overline{\lambda}\geq1$ depending only on $G$, $G_i$, $\mathcal{O}_i$, $\overline{\mu}$, $T$, $\sigma_{ij}^m$, $\sigma_0$, $\beta$,  $a_{ij}$, $b_{ij}$ and $B_{ij}$ such that for all $\lambda\geq\overline{\lambda}$, and $q_0,v_0\in L^2_{\mathcal{F}_0}(\Omega;L^2(G))$, the associated solution $((h,k;H,K);(q,v))$ of the system \eqref{adback4.77}-\eqref{adjoforr4.8} satisfies that 
\begin{align}\label{carlestcasca33.4}
\begin{aligned}
    &\,\mathcal{I}(2,h) + \mathcal{I}(2,k) + \mathcal{I}(3,q) + \mathcal{I}(3,v) \\
    &\leq C \, \mathbb{E} \iint_Q \Big\{\lambda^{12}\theta^2 \gamma^{12} (h^2\chi_{G_0}+k^2\chi_{G_1}) + \lambda^{5}  \theta^2 \gamma^{5} (H^2+K^2) \Big\}\, dx \, dt.
    \end{aligned}
\end{align}
\end{thm}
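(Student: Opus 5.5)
The plan is to follow the same strategy as in the proof of Theorem~\ref{carlthm32}: start from the forward Carleman estimate for $(q,v)$, transfer the local observations of $q$ and $v$ to $h$ and $k$ through the observation coupling, then close with the backward Carleman estimate for $(h,k)$. Because $\beta\in(0,1)$, both $q$ and $v$ feed directly into the backward system. So the transfers $q\to h$ and $v\to k$ can run in parallel, and no use of $a_{21}$ is needed. This is why \eqref{assA3}--\eqref{assA5} can be dropped.

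\textbf{Step 1 (cut-offs and forward estimate).} By \eqref{assA2}, I choose a nonempty open set $\widetilde G_0\subset G_0\cap G_1\cap\mathcal O_1\cap\mathcal O_3$ with $\widetilde G_0\cap(\mathcal O_2\cup\mathcal O_4)=\emptyset$. Inside it I take nested sets $\widetilde G_3\Subset\widetilde G_2\Subset\widetilde G_1\Subset\widetilde G_0$ and cut-offs $\zeta_i$ as before, with $w_l=\theta^2(\lambda\gamma)^l$ satisfying \eqref{esttmforT}. I apply Lemma~\ref{thm3.3} with $d=3$ and $\mathcal B=\widetilde G_3$ to both equations of \eqref{adjoforr4.8}. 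The coupling terms are zeroth and first order in $(q,v)$, so they can be absorbed for large $\lambda$, which gives
\[
\mathcal I(3,q)+\mathcal I(3,v)\le C\lambda^3\mathbb E\int_0^T\!\!\int_{\widetilde G_3}\theta^2\gamma^3(q^2+v^2)\,dx\,dt .
\]

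\textbf{Step 2 (from $q,v$ to $h,k$).} On $\widetilde G_2$ we have $\chi_{\mathcal O_3}=\chi_{\mathcal O_1}=1$ and $\chi_{\mathcal O_2}=\chi_{\mathcal O_4}=0$. Hence Itô's formula for $d(w_3\zeta_1 hq)$, combining the first equation of \eqref{adback4.77} with \eqref{adjoforr4.8}, produces $\beta\,\mathbb E\iint_Q w_3\zeta_1 q^2$ on the left. The remaining terms are bounded with \eqref{esttmforT} and Young's inequality:
\begin{itemize}
\item $(w_3\zeta_1)_t hq$ gives $\varepsilon\lambda^3\gamma^3 q^2+C_\varepsilon\lambda^7\gamma^7 h^2$;
\item the gradient pairings $\nabla(w_3\zeta_1 h)\cdot\nabla q$ and $\nabla(w_3\zeta_1 q)\cdot\nabla h$ give $\varepsilon\mathcal I(3,q)+C_\varepsilon\lambda^{7}\gamma^7h^2+C_\varepsilon\lambda^5\gamma^5|\nabla h|^2$, localized in $\widetilde G_2$;
\item the zeroth- and first-order coupling terms $a_{ij},B_{ij}$ are handled the same way; in particular, the $\nabla\cdot(kB_{21})$ terms produce local $k$ and $\nabla k$ contributions of the same orders;
\item the Itô correction $(b_{11}q+b_{12}v)H$ and the drift terms $b_{11}H+b_{21}K$ give $\varepsilon\lambda^3\gamma^3(q^2+v^2)+C_\varepsilon\lambda^3\gamma^3(H^2+K^2)$ globally.
\end{itemize}
I do the same with $d(w_3\zeta_1 kv)$, using the factor $(1-\beta)$. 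Choosing $\varepsilon$ small, $\mathcal I(3,q)+\mathcal I(3,v)$ is bounded by local terms $\lambda^7\gamma^7(h^2+k^2)$ and $\lambda^5\gamma^5(|\nabla h|^2+|\nabla k|^2)$ on $\widetilde G_2$, plus $\lambda^5\gamma^5(H^2+K^2)$ globally.

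\textbf{Step 3 (removing $\nabla h,\nabla k$ and the backward estimate).} Using ellipticity as in \eqref{esiforgradh}, Itô's formula for $d(w_5\zeta_2h^2)$ and $d(w_5\zeta_2k^2)$ reduces the local gradient terms to the following, with the $\chi_{\mathcal O_2},\chi_{\mathcal O_4}$ terms again vanishing on $\operatorname{supp}\zeta_2$:
\begin{itemize}
\item $\varepsilon\mathcal I(2,h)+\varepsilon\mathcal I(2,k)$, from the cross terms $\lambda^6 h\nabla h$, which force $\lambda^{12}\gamma^{12}h^2$ on $\widetilde G_1$, and similarly for $k$;
\item $\varepsilon\mathcal I(3,q)+\varepsilon\mathcal I(3,v)$, from $w_5\zeta_2(\beta q h+(1-\beta)v k)$;
\item $\lambda^5\gamma^5(H^2+K^2)$, from the $H,K$ cross terms, using $-w_5\zeta_2H^2\le0$.
\end{itemize}
Next I apply Lemma~\ref{thmm3.1cab} with $d=2$ and $\mathcal B=\widetilde G_1$ to \eqref{adback4.77}. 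The source terms $\beta\chi_{\mathcal O_3}q$ and $\beta\chi_{\mathcal O_4}\nabla q$, together with their analogues for $v$, are bounded by $\lambda^{-1}\gamma^{-1}q^2$ and $\lambda\gamma|\nabla q|^2$. The first-order terms $\nabla\cdot(hB_{ij})$ are bounded by $\lambda\gamma h^2$ and are absorbed. The $H,K$ terms yield $\lambda\gamma(H^2+K^2)$. The $q$ and $v$ contributions are controlled by $C(\mathcal I(3,q)+\mathcal I(3,v))$, which is already bounded by local $h,k$ data. Adding these estimates and taking $\varepsilon$ small and $\lambda$ large gives \eqref{carlestcasca33.4}, since $\widetilde G_1\subset G_0\cap G_1$.

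\textbf{Main obstacle.} The delicate point is the bookkeeping in Steps 2--3. Every term that is not localized must land either in an $\varepsilon$-multiple of the left-hand side or in the global $H,K$ integral with power at most $5$. In particular, the terms $\lambda\gamma|\nabla q|^2$ entering from the backward estimate sit at exactly the order of $\mathcal I(3,q)$. I plan to deal with this by chaining the estimates rather than absorbing directly: first bound $\mathcal I(3,\cdot)$ by local $(h,k)$ terms, then insert that bound into the backward estimate. This is what makes the geometric condition \eqref{assA2}, namely avoiding $\mathcal O_2\cup\mathcal O_4$, essential.
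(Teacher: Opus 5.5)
Your proposal is correct and follows the paper's proof essentially step by step. The paper likewise proceeds through the forward Carleman estimate for $(q,v)$ with $d=3$, then the transfer to local $h,k,\nabla h,\nabla k$ terms via Itô's formula with weight $w_3\zeta_1$ (using $\widetilde G_0\cap(\mathcal O_2\cup\mathcal O_4)=\emptyset$), then the removal of the local gradients via $d\bigl(w_5\zeta_2(h^2+k^2)\bigr)$, and finally the chaining with the backward estimate ($d=2$), so that $\lambda\gamma\theta^2(|\nabla q|^2+|\nabla v|^2)$ is covered by $\mathcal I(3,q)+\mathcal I(3,v)$ rather than absorbed. The only real difference is that the paper computes the single pairing $d\bigl[w_3\zeta_1(hq+kv)\bigr]$, where all $H,K$ terms and the $a_{12},a_{21}$ cross terms cancel by duality, whereas your separate computations of $d(w_3\zeta_1hq)$ and $d(w_3\zeta_1kv)$ keep them as harmless extra terms. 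In that route, handle $w_3\zeta_1q\,\nabla\cdot(kB_{21})$ by integrating by parts onto $w_3\zeta_1q$ rather than expanding into $\nabla k$, since $B_{21}$ is only $L^\infty$.
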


\begin{proof}
We divide the proof into three steps.\\
\textbf{Step 1. Some notations and preliminaries.}\\
According to the condition \eqref{assA2}, we consider the nonempty subsets \( \widetilde{G}_i \) (\( i=0,1,2 \)) such that
\begin{equation}\label{assonGitilde}
\widetilde{G}_2 \Subset \widetilde{G}_1 \Subset \widetilde{G}_0 \subset G_0 \cap G_1 \cap \mathcal{O}_1\cap \mathcal{O}_3,\quad\textnormal{and}\quad \widetilde{G}_0\cap(\mathcal{O}_2\cup\mathcal{O}_4)=\emptyset.
\end{equation}
We also define the functions \( \zeta_i \in C^{\infty}(\mathbb{R}^N) \) satisfying the following properties
\begin{align*}
\begin{aligned}
& 0 \leq \zeta_i \leq 1, \quad \zeta_i = 1 \,\, \text{in} \,\, \widetilde{G}_{3-i}, \quad \text{Supp}(\zeta_i) \subset \widetilde{G}_{2-i}, \\ 
& \frac{\Delta \zeta_i}{\zeta_i^{1/2}} \in L^\infty(G), \quad \frac{\nabla \zeta_i}{\zeta_i^{1/2}} \in L^\infty(G; \mathbb{R}^N), \quad i=1, 2.
\end{aligned}
\end{align*}
Next, set \( w_l = \theta^2 (\lambda \gamma)^l \) (with \( l \in \mathbb{N} \)), and observe that for large \( \lambda \), we have the following estimates
\begin{align}\label{esttmforTse}
|\partial_t w_l| \leq C \theta^2 (\lambda \gamma)^{l+2}, \qquad |\nabla(w_l \zeta_i)| \leq C \theta^2 (\lambda \gamma)^{l+1} \zeta_i^{1/2}, \qquad i = 1, 2.
\end{align}
Using the Carleman estimate \eqref{carfor5.6new} for  the system \eqref{adjoforr4.8} with \( \mathcal{B} = \widetilde{G}_2 \) and \( d = 3 \), and taking a large \( \lambda \), we deduce that
\begin{align}\label{estimm5.4ses}
\begin{aligned}
\mathcal{I}(3,q) + \mathcal{I}(3,v) \leq C \,\lambda^3 \mathbb{E} \int_0^T \int_{\widetilde{G}_2} \theta^2 \gamma^3 (q^2+v^2) \, dx \, dt.
\end{aligned}
\end{align}
\textbf{Step 2. Passing from the states \( q \) and $v$ to the states \( h \) and \( k \).}\\
Using the coupling terms  $\beta(\chi_{\mathcal{O}_3}q+\nabla\cdot(\chi_{\mathcal{O}_4}\nabla q))$ and $(1-\beta)(\chi_{\mathcal{O}_1}v+\nabla\cdot(\chi_{\mathcal{O}_2}\nabla v))$, we first note that 
\begin{align}\label{2estineqq3.32eses}
    \lambda^{3} \mathbb{E} \int_0^T \int_{\widetilde{G}_2} \theta^2 \gamma^{3} (q^2+v^2) \, dx \, dt \leq \mathbb{E} \iint_Q w_{3} \zeta_1 (q^2+v^2) \, dx \, dt.
\end{align}
Using \eqref{assonGitilde} and Itô's formula for \( d[w_{3} \zeta_1 (h q+kv)] \), we obtain that 
\begin{align}\label{2firsestine3.14ses}
    \begin{aligned}
    &\min(\beta,1-\beta)\,\mathbb{E} \iint_Q w_{3} \zeta_1 (q^2+v^2) \, dx \, dt \\
    &\leq\beta\mathbb{E} \iint_Q w_{3} \zeta_1 q^2 \, dx \, dt + (1-\beta)\mathbb{E} \iint_Q w_{3} \zeta_1 v^2 \, dx \, dt \\
    &= \mathbb{E} \iint_Q (w_{3} \zeta_1)_t hq \, dx \, dt +\mathbb{E} \iint_Q (w_{3} \zeta_1)_t k v \, dx \, dt\\
    &\quad+\sum_{i,j=1}^N \mathbb{E} \iint_Q \sigma_{ij}^1 q\frac{\partial (w_{3} \zeta_1)}{\partial x_i} \frac{\partial h}{\partial x_j} \, dx \, dt-\sum_{i,j=1}^N \mathbb{E} \iint_Q \sigma_{ij}^1 h\frac{\partial (w_{3} \zeta_1)}{\partial x_i} \frac{\partial q}{\partial x_j} \, dx \, dt\\
    &\quad - \mathbb{E} \iint_Q  kq B_{21}\cdot\nabla(w_{3} \zeta_1) \, dx \, dt+ \beta\mathbb{E} \iint_Q  \chi_{\mathcal{O}_4}\nabla(w_3\zeta_1q)\cdot\nabla q \, dx \, dt\\
    &\quad- \mathbb{E} \iint_Q  hq B_{11}\cdot\nabla(w_{3} \zeta_1) \, dx \, dt+\mathbb{E} \iint_Q  w_{3} \zeta_1 h B_{12}\cdot\nabla v \, dx \, dt \\
    &\quad+\sum_{i,j=1}^N \mathbb{E} \iint_Q \sigma_{ij}^2 v\frac{\partial (w_{3} \zeta_1)}{\partial x_i} \frac{\partial k}{\partial x_j} \, dx \, dt-\sum_{i,j=1}^N \mathbb{E} \iint_Q \sigma_{ij}^2 k\frac{\partial (w_{3} \zeta_1)}{\partial x_i} \frac{\partial v}{\partial x_j} \, dx \, dt\\
    &\quad - \mathbb{E} \iint_Q  h vB_{12}\cdot\nabla(w_{3} \zeta_1) \, dx \, dt- \mathbb{E} \iint_Q  kv B_{22}\cdot\nabla(w_{3} \zeta_1) \, dx \, dt\\
    &\quad+ (1-\beta)\mathbb{E} \iint_Q  \chi_{\mathcal{O}_2}\nabla(w_3\zeta_1v)\cdot\nabla v \, dx \, dt.
    \end{aligned}
\end{align}
By the second assumption of \eqref{assonGitilde} and applying Young's inequality, it is straightforward to deduce the following estimate: For any $\varepsilon>0$, and a large enough $\lambda$,
\begin{align}\label{intggII2ine}
\begin{aligned}
\mathbb{E} \iint_Q w_{3} \zeta_1 (q^2+v^2) \, dx \, dt \leq C\varepsilon& \mathcal{I}(3,q)+C\varepsilon \mathcal{I}(3,v)\\
&+\frac{C}{\varepsilon}\bigg[\lambda^7\mathbb{E}\int_0^T \int_{\widetilde{G}_1} \theta^2\gamma^7 h^2 \,dx\,dt+\lambda^7\mathbb{E}\int_0^T \int_{\widetilde{G}_1} \theta^2\gamma^7 k^2 \,dx\,dt\\
&\qquad\;+\lambda^5\mathbb{E}\int_0^T \int_{\widetilde{G}_1} \theta^2\gamma^5 |\nabla h|^2 \,dx\,dt+\lambda^5\mathbb{E}\int_0^T \int_{\widetilde{G}_1} \theta^2\gamma^5 |\nabla k|^2 \,dx\,dt\bigg].
\end{aligned}
\end{align}
Combining \eqref{estimm5.4ses}, \eqref{2estineqq3.32eses}, \eqref{2firsestine3.14ses}, \eqref{intggII2ine}, and taking a small \(\varepsilon\), we conclude that
\begin{align}\label{esecesttsecnew3.25ses}
\begin{aligned}
    \mathcal{I}(3,q) + \mathcal{I}(3,v) \leq C \bigg[ &\lambda^{7} \mathbb{E} \int_0^T \int_{\widetilde{G}_1} \theta^2 \gamma^{7} (h^2+k^2) \, dx \, dt\\
    &+\lambda^{5} \mathbb{E} \int_0^T \int_{\widetilde{G}_1} \theta^2 \gamma^{5} (|\nabla h|^2 +|\nabla k|^2) \, dx \, dt\bigg].
\end{aligned}
\end{align}
We now apply the Carleman estimate \eqref{3.22carlemgenBack} to the system \eqref{adback4.77}, with \(\mathcal{B} = \widetilde{G}_1\) and \(d = 2\). This leads to the existence of a constant \(C > 0\) and a sufficiently large \(\lambda\) such that
\begin{align}\label{estt4.3fses}
    \begin{aligned}
\mathcal{I}(2,h) + \mathcal{I}(2,k) \leq C \bigg[ &\lambda^2 \mathbb{E} \int_0^T \int_{\widetilde{G}_1} \theta^2 \gamma^2 (h^2+k^2) \, dx \, dt   + \lambda^{-1}\mathbb{E} \iint_Q \theta^2\gamma^{-1} (q^2+v^2) \, dx \, dt \\
&+ \lambda \mathbb{E} \iint_Q \theta^2\gamma (|\nabla q|^2+|\nabla v|^2) \, dx \, dt+ \lambda \mathbb{E} \iint_Q \theta^2 \gamma (H^2+K^2) \, dx \, dt\bigg].
    \end{aligned}
\end{align}
Combining \eqref{estt4.3fses} and \eqref{esecesttsecnew3.25ses}, and taking a large \(\lambda\), we get that
\begin{align}\label{estt4.3fnewoneses}
    \begin{aligned}
    &\,\mathcal{I}(2,h) + \mathcal{I}(2,k) + \mathcal{I}(3,q) + \mathcal{I}(3,v) \\
    &\leq C \bigg[ \lambda^{7} \mathbb{E} \int_0^T \int_{\widetilde{G}_1} \theta^2 \gamma^{7} (h^2+k^2) \, dx \, dt+\lambda^{5} \mathbb{E} \int_0^T \int_{\widetilde{G}_1} \theta^2 \gamma^{5} (|\nabla h|^2+|\nabla k|^2) \, dx \, dt\\
    &\qquad\quad+ \lambda \mathbb{E} \iint_Q \theta^2 \gamma (H^2+K^2) \, dx \, dt \bigg].
    \end{aligned}
\end{align}
\textbf{Step 3. Eliminating the localized integral associated with \(\nabla h\) and \(\nabla k\).}\\
Notice that 
\begin{align}\label{esiforgradhses}
\begin{aligned}
    \sigma_0 \lambda^{5} \mathbb{E} \int_0^T \int_{\widetilde{G}_1} \theta^2 \gamma^{5} (|\nabla h|^2+|\nabla k|^2) \, dx \, dt &\leq \mathbb{E} \iint_Q w_{5} \zeta_2 \sum_{i,j=1}^N \sigma_{ij}^1 \frac{\partial h}{\partial x_i} \frac{\partial h}{\partial x_j} \, dx \, dt\\
    &\quad+\mathbb{E} \iint_Q w_{5} \zeta_2 \sum_{i,j=1}^N \sigma_{ij}^2 \frac{\partial k}{\partial x_i} \frac{\partial k}{\partial x_j} \, dx \, dt.
    \end{aligned}
\end{align}
From the system \eqref{adback4.77}, computing \( d[w_{5} \zeta_2 (h^2+k^2)] \), we obtain
\begin{align}\label{mainestinfirst2ses}
    \begin{aligned}
    &2 \mathbb{E} \iint_Q w_{5} \zeta_2 \sum_{i,j=1}^N \sigma_{ij}^1 \frac{\partial h}{\partial x_i} \frac{\partial h}{\partial x_j} \, dx \, dt+2 \mathbb{E} \iint_Q w_{5} \zeta_2 \sum_{i,j=1}^N \sigma_{ij}^2 \frac{\partial k}{\partial x_i} \frac{\partial k}{\partial x_j} \, dx \, dt\\
    &= - \mathbb{E} \iint_Q (w_{5} \zeta_2)_t h^2 \, dx \, dt - 2 \sum_{i,j=1}^N \mathbb{E} \iint_Q \sigma_{ij}^1 h \frac{\partial (w_{5} \zeta_2)}{\partial x_i} \frac{\partial h}{\partial x_j} \, dx \, dt \\
    & \quad + 2 \mathbb{E} \iint_Q w_{5} \zeta_2 a_{11} h^2 \, dx \, dt + 2 \mathbb{E} \iint_Q w_{5} \zeta_2 a_{21} h k \, dx \, dt \\
    & \quad + 2\mathbb{E} \iint_Q (h B_{11}+k B_{21})\cdot\nabla(w_{5} \zeta_2 h) \, dx \, dt + 2 \mathbb{E} \iint_Q w_{5} \zeta_2 h(b_{11} H+b_{21}K) \, dx \, dt\\
    & \quad+ 2\beta\mathbb{E} \iint_Q  \Big(\chi_{\mathcal{O}_3} w_{5} \zeta_2 hq-\chi_{\mathcal{O}_4} \nabla(w_{5} \zeta_2 h)\cdot\nabla q\Big) \, dx \, dt- \mathbb{E} \iint_Q w_{5} \zeta_2 H^2 \, dx \, dt \\
    &\quad- \mathbb{E} \iint_Q (w_{5} \zeta_2)_t k^2 \, dx \, dt - 2 \sum_{i,j=1}^N \mathbb{E} \iint_Q \sigma_{ij}^2 k \frac{\partial (w_{5} \zeta_2)}{\partial x_i} \frac{\partial k}{\partial x_j} \, dx \, dt \\
    & \quad + 2 \mathbb{E} \iint_Q w_{5} \zeta_2 a_{22} k^2 \, dx \, dt + 2 \mathbb{E} \iint_Q w_{5} \zeta_2 a_{12} h k \, dx \, dt \\
    & \quad + 2\mathbb{E} \iint_Q (k B_{22}+h B_{12})\cdot\nabla(w_{5} \zeta_2 k) \, dx \, dt+ 2 \mathbb{E} \iint_Q w_{5} \zeta_2 k(b_{12} H+b_{22}K) \, dx \, dt\\
    & \quad+2(1-\beta)\mathbb{E} \iint_Q  \Big(\chi_{\mathcal{O}_1} w_{5} \zeta_2 kv-\chi_{\mathcal{O}_2} \nabla(w_{5} \zeta_2 k)\cdot\nabla v\Big) \, dx \, dt- \mathbb{E} \iint_Q w_{5} \zeta_2 K^2 \, dx \, dt.
    \end{aligned}
\end{align}
Recalling \eqref{esttmforTse}, the second assumption of \eqref{assonGitilde} and by Young's inequality, we find that for any \( \varepsilon > 0 \), 
\begin{align}\label{ineqI1estses17in}
\begin{aligned}
&2 \mathbb{E} \iint_Q w_{5} \zeta_2 \sum_{i,j=1}^N \sigma_{ij}^1 \frac{\partial h}{\partial x_i} \frac{\partial h}{\partial x_j} \, dx \, dt+2 \mathbb{E} \iint_Q w_{5} \zeta_2 \sum_{i,j=1}^N \sigma_{ij}^2 \frac{\partial k}{\partial x_i} \frac{\partial k}{\partial x_j} \, dx \, dt\\
&\leq C\varepsilon \mathcal{I}(2,h)+C\varepsilon \mathcal{I}(2,k)+C\varepsilon \mathcal{I}(3,q)+C\varepsilon \mathcal{I}(3,v)\\
&\quad+ \frac{C}{\varepsilon}\bigg[ \lambda^{12} \mathbb{E} \int_0^T \int_{\widetilde{G}_0} \theta^2 \gamma^{12} (h^2+k^2) \, dx \, dt+\lambda^{5} \mathbb{E} \iint_Q \theta^2 \gamma^{5} (H^2+K^2) \, dx \, dt\bigg].
\end{aligned}
\end{align}
Combining \eqref{estt4.3fnewoneses}, \eqref{esiforgradhses}, \eqref{mainestinfirst2ses}, \eqref{ineqI1estses17in}, and choosing a small \(\varepsilon\), we conclude that
\begin{align*}
    \begin{aligned}
    &\, \mathcal{I}(2,h) + \mathcal{I}(2,k) + \mathcal{I}(3,q) + \mathcal{I}(3,v) \\
    & \leq C \bigg[ \lambda^{12} \mathbb{E} \int_0^T \int_{\widetilde{G}_0} \theta^2 \gamma^{12} (h^2+k^2) \, dx \, dt+\lambda^{5} \mathbb{E} \iint_Q \theta^2 \gamma^{5} (H^2+K^2) \, dx \, dt\bigg],
    \end{aligned}
\end{align*}
which gives the desired Carleman estimate \eqref{carlestcasca33.4}. This completes the proof of Theorem \ref{carlthm3233.4}.
\end{proof}

\section{Observability Inequality}\label{sec4}

Now, by applying the global Carleman estimate \eqref{carlestcasca}, we prove the following observability inequality for the coupled system \eqref{adback4.77}-\eqref{adjoforr4.8} with $\beta=0$.

\begin{prop}[Case $\beta=0$]\label{propo5.1obseine}
We assume that  \eqref{assA1}, \eqref{assA3}, \eqref{assA4}, and \eqref{assA5} hold. Then, there exist positive constants \( C \) and \( M \) depending only on \( G \), \( G_0 \), \( \mathcal{O}_1 \), \( \mathcal{O}_2 \), \( T \), \( \sigma_{ij}^m \), \( \sigma_0 \), \( a_0 \), \( a_{ij} \), $b_{ij}$, and \( B_{ij} \) such that for any \( q_0, v_0 \in L^2_{\mathcal{F}_0}(\Omega;L^2(G)) \), the solution \( ((h,k;H,K);(q,v)) \) of the coupled systems \eqref{adback4.77}-\eqref{adjoforr4.8} satisfies that
\begin{align}\label{obseine4.9}
\begin{aligned}
\mathbb{E} \iint_Q \exp\left(-Mt^{-1}\right) \left( h^2 + k^2 \right) \, dx \, dt 
&\leq C \,\mathbb{E} \iint_Q \left(h^2\chi_{G_0} + H^2 + K^2 \right) \, dx \, dt.
\end{aligned}
\end{align}
\end{prop}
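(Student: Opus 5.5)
The plan is to combine the Carleman estimate \eqref{carlestcasca} of Theorem~\ref{carlthm32} with an energy estimate for the backward system \eqref{adback4.77}. The energy estimate propagates information from $(T/2,T)$ down to $(0,T/2)$. Fix $\lambda=\overline{\lambda}$ (and $\mu=\overline{\mu}$).

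\textbf{Right-hand side of \eqref{carlestcasca}.} Since $\alpha<0$, the function $t\mapsto\theta^2\gamma^{m}$ is bounded on $\overline Q$ for every $m$, because the exponential decay at $t=0,T$ dominates the polynomial blow-up of $\gamma^m$. Hence
\[
\lambda^{178}\theta^2\gamma^{178}\le C,\qquad \lambda^{88}\theta^2\gamma^{88}\le C\quad\text{in }Q,
\]
and the right-hand side of \eqref{carlestcasca} is bounded by $C\,\mathbb{E}\iint_Q(h^2\chi_{G_0}+H^2+K^2)\,dx\,dt$.

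\textbf{Left-hand side on $(T/2,T)$.} For $t\in(T/2,T)$ we have $\gamma(t)\ge 4/T^2$ and $\gamma(t)\le C/(T-t)$. We also have $\alpha\ge -C\gamma$, so $\theta^2\gamma^2\ge \exp(-C/(T-t))$. This gives
\[
\mathbb{E}\int_{T/2}^{T}\!\!\int_G \exp\!\Big(-\frac{C}{T-t}\Big)(h^2+k^2)\,dx\,dt \le C\,\mathbb{E}\iint_Q(h^2\chi_{G_0}+H^2+K^2)\,dx\,dt .
\]
This weight degenerates at $t=T$, which is the wrong end. I therefore intend to restrict further to $(T/4,3T/4)$, or more simply to $(T/2,3T/4)$, where $\theta^2\gamma^2\ge c>0$ uniformly. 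This yields
\[
\mathbb{E}\int_{T/2}^{3T/4}\!\!\int_G (h^2+k^2)\,dx\,dt\le C\,\mathbb{E}\iint_Q(h^2\chi_{G_0}+H^2+K^2)\,dx\,dt .
\]
In the same way, $\mathcal{I}(3,q)+\mathcal{I}(3,v)$ controls $q,v$ and $\nabla q,\nabla v$ in $L^2$ on $(T/4,3T/4)\times G$, and I will use this too.

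\textbf{Energy estimate on $(0,T/2)$.} For $t\in(0,T/2)$, apply It\^o's formula to $\|h\|^2+\|k\|^2$ for the backward system \eqref{adback4.77} (with $\beta=0$) between $t$ and $s\in(T/2,3T/4)$. The source term is $\chi_{\mathcal O_1}v-\nabla\cdot(\chi_{\mathcal O_2}\nabla v)$. Using ellipticity, integrating by parts the divergence terms $\nabla\cdot(hB_{1j}+kB_{2j})$ and $\nabla\cdot(\chi_{\mathcal O_2}\nabla v)$, and applying Young's inequality and Gronwall's inequality gives
\[
\mathbb{E}\|(h,k)(t)\|^2\le C\Big(\mathbb{E}\|(h,k)(s)\|^2+\mathbb{E}\int_t^{s}\!\!\int_G (v^2+|\nabla v|^2)\,dx\,d\tau\Big).
\]

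\textbf{Controlling the $v$-term near $t=0$.} This is the main obstacle: the integral above involves $v$ and $\nabla v$ down to times close to $0$, where the Carleman weights degenerate. I would handle it as follows.
\begin{enumerate}[(a)]
\item Multiply the energy estimate by the weight $\exp(-M/t)$ and integrate in $t$ over $(0,T/2)$.
\item Bound $v,\nabla v$ by a forward energy estimate for \eqref{adjoforr4.8}. Since $q,v$ solve a forward equation, backward-in-time propagation from $(T/4,3T/4)$ is not available directly.
\item Instead, keep the weight $\theta^2$ and use that $\mathcal{I}(3,v)\ge c\,\mathbb{E}\iint_Q e^{-C/t}(v^2+|\nabla v|^2)$ on $(0,T/2)$. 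This holds because on $(0,T/2)$ one has $\theta^2\gamma\ge \exp(-C/t)$.
\item Choose $M$ larger than this $C$, so that after integrating in $t$ the $v$-contribution is bounded by $C\,\mathcal{I}(3,v)$, with the extra factor $\int_0^{T/2}e^{-M/t}\,dt$ harmless.
\end{enumerate}
Concretely, I would weight the energy identity pointwise in $\tau$ by $\exp(-M/\tau)$ in a Gronwall argument. This is possible because $\exp(-M/\tau)$ is increasing, so it can be moved inside the integral $\int_t^s$.

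\textbf{Conclusion.} Averaging over $s\in(T/2,3T/4)$ and combining with the Carleman bound gives
\[
\mathbb{E}\int_0^{T/2}\!\!\int_G e^{-M/t}(h^2+k^2)\,dx\,dt\le C\,\mathbb{E}\iint_Q(h^2\chi_{G_0}+H^2+K^2)\,dx\,dt.
\]
On $(T/2,T)$ the weight $e^{-M/t}$ is bounded. There the same backward energy estimate from $T$, where $h=k=0$, bounds $\mathbb{E}\|(h,k)(t)\|^2$ by $C\,\mathbb{E}\int_t^T\!\int_G(v^2+|\nabla v|^2)$ over $(T/2,T)$. This is in turn controlled by a forward energy estimate for $(q,v)$ starting from $t=T/2$. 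The data at $T/2$ is bounded via $\mathcal{I}(3,q)+\mathcal{I}(3,v)$ averaged over $(T/4,T/2)$ together with the forward energy inequality. Summing the two pieces yields \eqref{obseine4.9}.
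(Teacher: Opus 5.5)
Your proof is correct. On $(T/2,T)$ it coincides with the paper's argument:
\begin{itemize}
\item the backward energy estimate for $(h,k)$, started from $h(T)=k(T)=0$, bounds $\mathbb{E}\int_{T/2}^T\int_G(h^2+k^2)$ by $\mathbb{E}\int_{T/2}^T\int_G(v^2+|\nabla v|^2)$;
\item the forward energy inequality for $(q,v)$ reduces this to $\mathbb{E}\int_{T/4}^{3T/4}\int_G(q^2+v^2)$;
\item since $\theta^2\gamma^3\ge c$ on $(T/4,3T/4)$, this is controlled by $\mathcal{I}(3,q)+\mathcal{I}(3,v)$ and hence by \eqref{carlestcasca}, whose right-hand side you bound exactly as the paper does.
\end{itemize}

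The difference is on $(0,T/2)$, and there the ``main obstacle'' you describe does not actually arise. The paper fixes $\lambda=\overline{\lambda}$ and takes $M$ large, so that $\exp(-M/t)\le C\theta^2\gamma^2$ in $(0,T/2)\times G$. The term $\lambda^2\theta^2\gamma^2(h^2+k^2)$ in $\mathcal{I}(2,h)+\mathcal{I}(2,k)$ then controls $\mathbb{E}\int_0^{T/2}\int_G e^{-M/t}(h^2+k^2)$ directly. The paper writes $\gamma^3$ at this point, but $\gamma^2$ is the power that matches $\mathcal{I}(2,\cdot)$.

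This is the same comparison you use in step (c) for $v$. Applied to $(h,k)$, it makes the backward propagation from $s\in(T/2,3T/4)$, the weighted Gronwall step and the averaging in $s$ unnecessary.

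Your detour is still sound:
\begin{itemize}
\item moving $e^{-M/t}$ inside $\int_t^s$ is justified by monotonicity;
\item $\int_0^{3T/4}e^{-M/\tau}\,\mathbb{E}\int_G(v^2+|\nabla v|^2)\,d\tau$ is bounded by $C\,\mathcal{I}(3,v)$;
\item the average of $\mathbb{E}\|(h,k)(s)\|^2_{L^2(G)}$ over $s\in(T/2,3T/4)$ is bounded by $C(\mathcal{I}(2,h)+\mathcal{I}(2,k))$.
\end{itemize}
Its only feature is that it uses the Carleman information on $(h,k)$ only in the middle of the time interval. Since \eqref{carlestcasca} already gives the weighted norm of $(h,k)$ near $t=0$, it gains nothing over the paper's two-line argument.
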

\begin{proof}
Here, we fix \( \lambda = \overline{\lambda} \) as given in Theorem \ref{carlthm32}, and begin by deriving the standard energy estimate for the forward system \eqref{adjoforr4.8}. Let \( t_1, t_2 \in (0,T) \) such that \( t_1 < t_2 \). Using Itô's formula for \( d(q^2 + v^2) \), integrating the resulting equality over the interval \( (t_1, t_2) \), and taking the expectation on both sides, we obtain
\begin{align*}
\mathbb{E}\int_{t_1}^{t_2}\int_G d(q^2 + v^2) \,dx 
&= -2\mathbb{E}\int_{t_1}^{t_2}\int_G \sum_{i,j=1}^N \sigma_{ij}^1\frac{\partial q}{\partial x_i}\frac{\partial q}{\partial x_j}\,dx\,dt + 2\mathbb{E}\int_{t_1}^{t_2}\int_G a_{11} q^2 \,dx\,dt\\
&\hspace{0.4cm} + 2\mathbb{E}\int_{t_1}^{t_2}\int_G a_{12} qv \,dx\,dt
+ 2\mathbb{E}\int_{t_1}^{t_2}\int_G q B_{11}\cdot\nabla q \,dx\,dt\\
&\hspace{0.4cm}+ 2\mathbb{E}\int_{t_1}^{t_2}\int_G q B_{12}\cdot\nabla v \,dx\,dt+\mathbb{E}\int_{t_1}^{t_2}\int_G (b_{11}q+b_{12}v)^2 \,dx\,dt
\\
&\hspace{0.4cm}
-2\mathbb{E}\int_{t_1}^{t_2}\int_G \sum_{i,j=1}^N \sigma_{ij}^2\frac{\partial v}{\partial x_i}\frac{\partial v}{\partial x_j} \,dx\,dt+ 2\mathbb{E}\int_{t_1}^{t_2}\int_G v B_{21}\cdot\nabla q \,dx\,dt\\
&\hspace{0.4cm}+ 2\mathbb{E}\int_{t_1}^{t_2}\int_G a_{21} qv \,dx\,dt + 2\mathbb{E}\int_{t_1}^{t_2}\int_G a_{22} v^2 \,dx\,dt \\
&\hspace{0.4cm}+ 2\mathbb{E}\int_{t_1}^{t_2}\int_G v B_{22}\cdot\nabla v \,dx\,dt+\mathbb{E}\int_{t_1}^{t_2}\int_G (b_{21}q+b_{22}v)^2 \,dx\,dt.
\end{align*}
By Young's inequality, it is not difficult to see that
\begin{align}\label{ineq4.155fi}
\begin{aligned}
&\mathbb{E}\int_G \left[q^2(t_2) + v^2(t_2)\right] \,dx + \mathbb{E}\int_{t_1}^{t_2}\int_G |\nabla v|^2 \,dx\,dt\\
&\leq \mathbb{E}\int_G \left[q^2(t_1) + v^2(t_1)\right] \,dx + C\mathbb{E}\int_{t_1}^{t_2}\int_G \left(q^2 + v^2\right) \,dx\,dt.
\end{aligned}
\end{align}
For \(t\in[0,T]\), we define the energy functional
$$\mathcal{E}(t)=\mathbb{E}\int_G \left[q^2(t)+v^2(t)\right]\,dx.$$
Hence, inequality \eqref{ineq4.155fi} takes the form
\begin{align}\label{ineq4.155}
\begin{aligned}
&\mathcal{E}(t_2) + \mathbb{E}\int_{t_1}^{t_2}\int_G |\nabla v|^2 \,dx\,dt\leq \mathcal{E}(t_1) + C\int_{t_1}^{t_2} \mathcal{E}(t)\,dt.
\end{aligned}
\end{align}
\begin{itemize}
\item Firstly, by applying Gronwall's inequality to \eqref{ineq4.155}, we find that
$$\mathcal{E}(t_2) \leq C \,\mathcal{E}(t_1).$$
Then for any \( t \in (T/4, 3T/4) \), it follows that
\begin{align}\label{inee4.1666i}
\mathcal{E}\Big(t+\frac{T}{4}\Big) \leq C \,\mathcal{E}(t).
\end{align}
Integrating \eqref{inee4.1666i} with respect to \( t \in (T/4, 3T/4) \), we get
\begin{align}\label{estimmqp11}
\mathbb{E}\int_{T/2}^T\int_G \left(q^2 + v^2\right) \,dx\,dt 
\leq C\mathbb{E}\int_{T/4}^{3T/4}\int_G \left(q^2 + v^2\right) \,dx\,dt.
\end{align}
\item Secondly, from \eqref{ineq4.155}, we have that
\begin{align}\label{graditem}
\mathbb{E}\int_{T/2}^{T}\int_G |\nabla v|^2\,dx\,dt
\le \mathcal{E}\Big(\frac{T}{2}\Big)+
C \int_{T/2}^{T} \mathcal{E}(t)\,dt.
\end{align}
For the first term on the right-hand side of \eqref{graditem}, by the energy estimate, it is not difficult to see that for any $r\in(T/4,T/2)$,
\begin{align}\label{ineqqA}
\mathcal{E}\Big(\frac{T}{2}\Big)\leq C \int_{T/4}^{3T/4} \mathcal{E}(r) \,dr.
\end{align}
For the second term on the right-hand side of \eqref{graditem}, we have for any $s\in(T/2,T)$,
$$\mathcal{E}(s)\leq C\, \mathcal{E}\Big(s-\frac{T}{4}\Big).$$
It follows that
$$\int_{T/2}^{T}\mathcal{E}(s) ds\leq C \int_{T/2}^{T}\mathcal{E}\Big(s-\frac{T}{4}\Big) ds,$$
which gives that
\begin{align}\label{ineqqB}
\int_{T/2}^{T}\mathcal{E}(s) ds\leq C \int_{T/4}^{3T/4}\mathcal{E}(s) ds.
\end{align}
From \eqref{graditem}, \eqref{ineqqA} and \eqref{ineqqB}, we obtain that
\begin{align}\label{graditemsec}
\mathbb{E}\int_{T/2}^{T}\int_G |\nabla v|^2\,dx\,dt
\le  C \mathbb{E}\int_{T/4}^{3T/4}\int_G (q^2+v^2) \,dx\,dt.
\end{align}
\end{itemize}
Combining \eqref{estimmqp11} and \eqref{graditemsec}, we deduce that
\begin{align}\label{estimmqp11ne}
\mathbb{E}\int_{T/2}^T\int_G \left(q^2 + v^2+|\nabla v|^2\right) \,dx\,dt 
\leq C\mathbb{E}\int_{T/4}^{3T/4}\int_G \left(q^2 + v^2\right) \,dx\,dt.
\end{align}

On the other hand, for any \( t \in (0,T) \), computing \( d(h^2 + k^2) \), we have that
\begin{align*}
\begin{aligned}
\mathbb{E}\int_t^T\int_G d(h^2+k^2)\,dx
&= 2\mathbb{E}\int_t^T\int_G \sum_{i,j=1}^N \sigma_{ij}^1\frac{\partial h}{\partial x_i}\frac{\partial h}{\partial x_j} \,dx\,ds - 2\mathbb{E}\int_t^T\int_G a_{11} h^2 \,dx\,ds  \\
&\hspace{0.4cm} - 2\mathbb{E}\int_t^T\int_G a_{21} h k \,dx\,ds - 2\mathbb{E}\int_t^T\int_G b_{11} h H \,dx\,ds\\
&\hspace{0.4cm} - 2\mathbb{E}\int_t^T\int_G b_{21} hK \,dx\,ds- 2\mathbb{E}\int_t^T\int_G (h B_{11}+kB_{21})\cdot\nabla h \,dx\,ds \\
&\hspace{0.4cm} + \mathbb{E}\int_t^T\int_G H^2 \,dx\,ds   + 2\mathbb{E}\int_t^T\int_G \sum_{i,j=1}^N \sigma_{ij}^2\frac{\partial k}{\partial x_i}\frac{\partial k}{\partial x_j}  \,dx\,ds\\
&\hspace{0.4cm} - 2\mathbb{E}\int_t^T\int_G a_{12} h k \,dx\,ds   - 2\mathbb{E}\int_t^T\int_G a_{22} k^2 \,dx\,ds\\
&\hspace{0.4cm} 
- 2\mathbb{E}\int_t^T\int_G b_{12} k H \,dx\,ds- 2\mathbb{E}\int_t^T\int_G b_{22} k K \,dx\,ds\\
&\hspace{0.4cm}
- 2\mathbb{E}\int_t^T\int_G (h B_{12}+k B_{22})\cdot\nabla k \,dx\,ds+ \mathbb{E}\int_t^T\int_G K^2 \,dx\,ds\\
&\hspace{0.4cm}- 2\mathbb{E}\int_t^T\int_G \chi_{\mathcal{O}_1}  kv \,dx\,ds-2\mathbb{E}\int_t^T\int_G \chi_{\mathcal{O}_2} \nabla v\cdot\nabla k \,dx\,ds,
\end{aligned}
\end{align*}
which provides that
\begin{align}\label{ineqq1.18sec}
\begin{aligned}
-\mathbb{E}\int_t^T\int_G d(h^2+k^2) \,dx &\leq C\mathbb{E}\int_t^T\int_G (h^2+k^2+v^2+|\nabla v|^2) \,dx\,ds.
\end{aligned}
\end{align}
Since \( h(T,\cdot)=k(T,\cdot) = 0 \) in \( G \), a.s., using Gronwall's inequality for \eqref{ineqq1.18sec}, we derive that
$$\mathbb{E}\int_G [h^2(t)+k^2(t)] \,dx \leq C\mathbb{E}\int_t^T\int_G (v^2+|\nabla v|^2) \,dx\,ds.$$
Then for any \( t \in (T/2,T) \), it follows that
\begin{align}\label{estimmqp22}
\mathbb{E}\int_{T/2}^T\int_G (h^2+k^2)\,dx\,dt \leq C\mathbb{E}\int_{T/2}^{T}\int_G (v^2+|\nabla v|^2) \,dx\,dt.
\end{align}
Combining \eqref{estimmqp11ne} and \eqref{estimmqp22}, we arrive at 
\begin{align}\label{estimmqp}
\mathbb{E}\int_{T/2}^T \int_G (h^2 + k^2)\,dx\,dt \leq C \mathbb{E} \int_{T/4}^{3T/4} \int_G (q^2 + v^2) \,dx\,dt.
\end{align}
Next, it is straightforward to observe that
\begin{align*}
\mathbb{E}\int_{T/2}^T \int_G \textnormal{exp}(-Mt^{-1})(h^2 + k^2)\,dx\,dt \leq \mathbb{E}\int_{T/2}^T \int_G (h^2 + k^2)\,dx\,dt,
\end{align*}
which, combined with \eqref{estimmqp}, yields
\begin{align*}
\mathbb{E}\int_{T/2}^T \int_G \textnormal{exp}(-Mt^{-1})(h^2 + k^2)\,dx\,dt \leq C \mathbb{E}\int_{T/4}^{3T/4} \int_G \theta^2 \gamma^3 (q^2 + v^2) \,dx\,dt.
\end{align*}
Consequently,
\begin{align}\label{ineqq4.244i}
\mathbb{E}\int_{T/2}^T \int_G \textnormal{exp}(-Mt^{-1})(h^2 + k^2)\,dx\,dt \leq C \mathbb{E}\iint_Q \theta^2 \gamma^3 (q^2 + v^2) \,dx\,dt.
\end{align}
Applying the Carleman estimate \eqref{carlestcasca} to the right-hand side of \eqref{ineqq4.244i}, we deduce that
\begin{align}\label{secestim522}
\mathbb{E}\int_{T/2}^T \int_G \textnormal{exp}(-Mt^{-1})(h^2 + k^2)\,dx\,dt \leq C \mathbb{E}\iint_Q \left(h^2\chi_{G_0} + H^2 + K^2\right) \,dx\,dt.
\end{align}

On the other hand, it is easy to see that there exists a sufficiently large constant $M$ such that
$$\textnormal{exp}(-Mt^{-1})\leq C \theta^2\gamma^3,\quad \textnormal{in}\;(0,T/2)\times G,$$
which implies that
\begin{align}\label{estimmqpfir11}
\begin{aligned}
\mathbb{E}\int_0^{T/2}\int_G \textnormal{exp}(-Mt^{-1})(h^2+k^2)\,dx\,dt \leq C\mathbb{E}\iint_Q \theta^2\gamma^3 (h^2+k^2) \,dx\,dt.
\end{aligned}
\end{align}
Using again the Carleman estimate \eqref{carlestcasca} on the right-hand side of \eqref{estimmqpfir11}, we obtain
\begin{align}\label{firsesto518}
\begin{aligned}
\mathbb{E}\int_0^{T/2}\int_G \textnormal{exp}(-Mt^{-1})(h^2+k^2)\,dx\,dt \leq C \mathbb{E}\iint_Q \left(h^2\chi_{G_0} + H^2 + K^2\right) \,dx\,dt.
\end{aligned}
\end{align}
Finally, combining \eqref{secestim522} and \eqref{firsesto518}, we conclude the desired observability inequality \eqref{obseine4.9}. This completes the proof of Proposition \ref{propo5.1obseine}.
\end{proof}

Similarly to the proof of Proposition \ref{propo5.1obseine}. By using the Carleman estimate \eqref{carlestcasca33.4}, we prove the following observability inequality for the coupled system \eqref{adback4.77}-\eqref{adjoforr4.8} when $\beta\in(0,1)$.

\begin{prop}[Case $\beta\in(0,1)$]\label{propo5.1obseine2se}
We assume that \eqref{assA2}  holds. Then, there exist positive constants \( C \) and \( M \) depending only on \( G \), \( G_i \), \( \mathcal{O}_i \), \( T \), \( \sigma_{ij}^m \), \( \sigma_0 \), \( a_{ij} \), $b_{ij}$ and \( B_{ij} \) such that for any \( q_0, v_0 \in L^2_{\mathcal{F}_0}(\Omega;L^2(G)) \), the solution \( ((h,k;H,K);(q,v)) \) of the coupled system \eqref{adback4.77}-\eqref{adjoforr4.8} satisfies that
\begin{align*}
\begin{aligned}
\mathbb{E} \iint_Q \exp\left(-Mt^{-1}\right) \left( h^2 + k^2 \right) \, dx \, dt 
&\leq C \,\mathbb{E} \iint_Q \left[  h^2 \chi_{G_0}+k^2 \chi_{G_1} + H^2 + K^2 \right] \, dx \, dt.
\end{aligned}
\end{align*}
\end{prop}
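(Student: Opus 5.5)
We follow the proof of Proposition~\ref{propo5.1obseine}, with the Carleman estimate \eqref{carlestcasca} replaced by \eqref{carlestcasca33.4}. Throughout, fix $\lambda=\overline{\lambda}$ as in Theorem~\ref{carlthm3233.4}. Then every weight $\theta^2\gamma^{d}$ that appears is bounded above on $Q$ by a constant. It is also bounded below by a positive constant on any strip $(t_1,t_2)\times G$ with $0<t_1<t_2<T$. The argument splits $(0,T)$ into $(0,T/2)$ and $(T/2,T)$.

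\textbf{Step 1: the interval $(0,T/2)$.} Since $\gamma\geq CT^{-2}$ and $\alpha\geq -C\gamma$, we can choose $M$ large so that $\exp(-Mt^{-1})\leq C\theta^2\gamma^{2}$ in $(0,T/2)\times G$. Hence the left-hand side restricted to $(0,T/2)$ is bounded by $C(\mathcal{I}(2,h)+\mathcal{I}(2,k))$. We then apply \eqref{carlestcasca33.4} and bound $\lambda^{12}\theta^2\gamma^{12}$ and $\lambda^5\theta^2\gamma^5$ by constants. This gives the bound $C\,\mathbb{E}\iint_Q(h^2\chi_{G_0}+k^2\chi_{G_1}+H^2+K^2)\,dx\,dt$.

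\textbf{Step 2: the interval $(T/2,T)$, where $\theta$ degenerates near $t=T$.} Here we need energy estimates, and there is one difference from the $\beta=0$ case. The backward system now contains the source terms $\beta(\chi_{\mathcal{O}_3}q-\nabla\cdot(\chi_{\mathcal{O}_4}\nabla q))$ and $(1-\beta)(\chi_{\mathcal{O}_1}v-\nabla\cdot(\chi_{\mathcal{O}_2}\nabla v))$. So we need $L^2$ control of both $\nabla q$ and $\nabla v$ on $(T/2,T)$, not just $\nabla v$.

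We proceed as follows.
\begin{enumerate}
\item Apply Itô's formula to $d(q^2+v^2)$ for \eqref{adjoforr4.8}. Using ellipticity \eqref{assmponalpha} and Young's inequality to absorb the first-order terms $B_{ij}\cdot\nabla$, we obtain $\mathcal{E}(t_2)+\mathbb{E}\int_{t_1}^{t_2}\int_G(|\nabla q|^2+|\nabla v|^2)\leq \mathcal{E}(t_1)+C\int_{t_1}^{t_2}\mathcal{E}$.
\item Exactly as in \eqref{estimmqp11}--\eqref{estimmqp11ne}, Gronwall's inequality and the time-shift argument then give
$\mathbb{E}\int_{T/2}^T\int_G(q^2+v^2+|\nabla q|^2+|\nabla v|^2)\leq C\,\mathbb{E}\int_{T/4}^{3T/4}\int_G(q^2+v^2)$.
\item Apply Itô's formula to $d(h^2+k^2)$ for \eqref{adback4.77}, integrate on $(t,T)$ and use $h(T)=k(T)=0$. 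The divergence-form observation terms are integrated by parts. They produce $\nabla h\cdot\nabla q$ and $\nabla k\cdot\nabla v$, which Young's inequality absorbs into the coercive gradient terms. The term $\nabla\cdot(hB_{11}+kB_{21})$ is handled the same way, and the $H,K$ cross terms are absorbed by $\mathbb{E}\int(H^2+K^2)$ with a good sign.
\item Backward Gronwall then yields
$\mathbb{E}\int_{T/2}^T\int_G(h^2+k^2)\leq C\,\mathbb{E}\int_{T/2}^T\int_G(q^2+v^2+|\nabla q|^2+|\nabla v|^2)\leq C\,\mathbb{E}\int_{T/4}^{3T/4}\int_G(q^2+v^2)$.
\end{enumerate}

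\textbf{Step 3: closing on $(T/2,T)$.} On $(T/4,3T/4)$ the weight $\theta^2\gamma^3$ is bounded below, so the right-hand side of Step 2 is at most $C\,\mathcal{I}(3,q)+C\,\mathcal{I}(3,v)$. Applying \eqref{carlestcasca33.4} once more and using $\exp(-Mt^{-1})\leq1$ gives the estimate on $(T/2,T)$. Adding the bounds from Steps 1 and 3 finishes the proof.

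The main obstacle is Step 2. There we must control the second-order coupling through $\nabla q$ and $\nabla v$ in the backward energy estimate, and this is why the forward energy inequality has to retain both gradient terms.
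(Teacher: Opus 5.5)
Your proposal is correct and follows essentially the same route as the paper, which proves this statement by repeating the argument of Proposition~\ref{propo5.1obseine} with \eqref{carlestcasca33.4} in place of \eqref{carlestcasca}: on $(0,T/2)$ the weight $\exp(-Mt^{-1})$ is dominated by the Carleman weights, and on $(T/2,T)$ forward and backward energy estimates combined with the time-shift argument finish the job. Your remark that the forward energy inequality must now keep both $\nabla q$ and $\nabla v$ (both enter the backward system in divergence form when $\beta\in(0,1)$) is exactly the needed adaptation, and bounding by $\theta^2\gamma^2$ on $(0,T/2)$ matches the left-hand side $\mathcal{I}(2,\cdot)$ of the Carleman estimate slightly more accurately than the paper's $\theta^2\gamma^3$.
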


\section{Proof of the Main Results}\label{sec5}
For completeness, this section is devoted to the proof of the main results of this paper, namely Theorems \ref{thmm1.3ins} and \ref{thmm1.3ins3}. We provide the detailed proof of Theorem \ref{thmm1.3ins}. The proof of Theorem \ref{thmm1.3ins3} follows by a similar argument, relying on the observability inequality established in Proposition \ref{propo5.1obseine2se}.

\begin{proof}[Proof of Theorem \ref{thmm1.3ins}]
Let \( \xi_1, \xi_2 \in L^2_\mathcal{F}(0,T; L^2(G)) \) satisfying \eqref{assonx1xi2ins}, and
consider the following linear subspace of the space 
\( \mathcal{U}_0 \):
\begin{align*}
\mathcal{Y} = \Big\{ &(h\chi_{G_0}, H, K) \mid \; ((h, k; H, K); (q, v)) \; \textnormal{is the solution of} \; \eqref{adback4.77}-\eqref{adjoforr4.8},\\
&
\quad \textnormal{with some} \; q_0, v_0 \in L^2_{\mathcal{F}_0}(\Omega;L^2(G)) \Big\}.
\end{align*}
We define the linear functional \(\mathcal{F}:\mathcal{Y}\to\mathbb{R}\) by
\[
\mathcal{F}(h\chi_{G_0}, H, K) = -\mathbb{E} \iint_Q (h\xi_1 + k\xi_2) \, dx \, dt.
\]
By the observability inequality \eqref{obseine4.9}, the functional \(\mathcal{F}\) is bounded on \(\mathcal{Y}\), and we have that
\begin{align}\label{esthanban}
||\mathcal{F}||_{\mathcal{L}(\mathcal{Y}; \mathbb{R})} \leq C \left( \left\| \textnormal{exp}\left(Mt^{-1}\right) \xi_1 \right\|_{L^2_{\mathcal{F}}(0,T;L^2(G))} + \left\| \textnormal{exp}\left(Mt^{-1}\right) \xi_2 \right\|_{L^2_{\mathcal{F}}(0,T;L^2(G))} \right).
\end{align}
Using the Hahn–Banach theorem, we can extend \( \mathcal{F} \) to a bounded linear functional on the whole space 
\(
\mathcal{U}_0,
\) and by the Riesz representation theorem, one can find controls $(u_1, u_3, u_4) \in \mathcal{U}_0$, such that
\begin{align}\label{fireq11}
-\mathbb{E} \iint_Q (h\xi_1 + k\xi_2) \, dx\, dt = \mathbb{E} \iint_Q (u_1 h \chi_{G_0}+ u_3 H + u_4 K) \, dx\, dt.
\end{align}
Furthermore, from \eqref{esthanban}, we also have that
\begin{align}\label{estfocontrll}
\begin{aligned}
\|(u_1,u_3,u_4)\|_{\mathcal{U}_0} \leq C \left( \left\| \text{exp}\left(Mt^{-1}\right) \xi_1 \right\|_{L^2_{\mathcal{F}}(0,T;L^2(G))} + \left\| \text{exp}\left(Mt^{-1}\right) \xi_2 \right\|_{L^2_{\mathcal{F}}(0,T;L^2(G))} \right).
\end{aligned}
\end{align}
Applying Itô’s formula to the systems \eqref{forr4.1}-\eqref{adback4.77}, we compute \( d(yh + zk) \), and by integrating the resulting expression over \( (0,T) \) and taking the expectation on both sides, we obtain
\begin{align}\label{eqq1.2}
\mathbb{E} \iint_Q \left( h\xi_1 + k\xi_2 + u_1 h\chi_{G_0}  + u_3 H + u_4 K - \chi_{\mathcal{O}_1} vz -\chi_{\mathcal{O}_2} \nabla v\cdot\nabla z \right) \, dx\, dt = 0.
\end{align}
Next, applying Itô’s formula to the systems \eqref{back45}-\eqref{adjoforr4.8}, we compute \( d(pq + rv) \) and derive that
\begin{align}\label{eqq1.232}
\mathbb{E}\int_G \left[ p(0) q_0 + r(0) v_0 \right] \, dx = \mathbb{E} \iint_Q (\chi_{\mathcal{O}_1} z v+\chi_{\mathcal{O}_2} \nabla v\cdot\nabla z ) \, dx\, dt.
\end{align}
From \eqref{eqq1.2} and \eqref{eqq1.232}, we find
\begin{align}\label{eqq6.44}
\mathbb{E}\int_G \left[ p(0) q_0 + r(0) v_0 \right] \, dx = \mathbb{E} \iint_Q \left( h\xi_1 + k\xi_2 + u_1 h \chi_{G_0}  + u_3 H + u_4 K \right) \, dx\, dt.
\end{align}
Combining \eqref{eqq6.44} and \eqref{fireq11}, we deduce that
\[
\mathbb{E}\int_G \left[ p(0) q_0 + r(0) v_0 \right] \, dx = 0.
\]
Since \( q_0 \) and \( v_0 \) are arbitrary functions of \(L^2_{\mathcal{F}_0}(\Omega;L^2(G)) \), we obtain that
\[
p(0,\cdot) = r(0,\cdot) = 0 \quad \text{in} \; G, \quad \textnormal{a.s.}
\]
Thus, by Proposition~\ref{proposs11} and the inequality \eqref{estfocontrll}, we conclude the proof of Theorem~\ref{thmm1.3ins}.
\end{proof}

\section*{Acknowledgements}
This article is based upon work from the project PRIN2022 D53D23005580006 ``Elliptic and parabolic problems, heat kernel estimates and spectral theory''. The third author is member of the ``Gruppo Nazionale per l’Analisi Matematica, la Probabilità e le loro Applicazioni (GNAMPA)'' of the Istituto Nazionale di Alta Matematica (INdAM).


\begin{thebibliography}{10}



\bibitem{surveyAmmarKBGT}
F. Ammar-Khodja, A. Benabdallah, M. González Burgos, and L. de Teresa.  
\newblock Recent results on the controllability of linear coupled parabolic problems: a survey.
\newblock{\em Mathematical Control and Related Fields,} \textbf{1} (2011), 267--306.

\bibitem{elgconvec23}
M. Baroun, S. Boulite, A. Elgrou, and L. Maniar.
\newblock Null controllability for stochastic parabolic equations coupled by first and zero order terms. \newblock {\em Applied Mathematics \& Optimization}, \textbf{91} (2025), 31.

\bibitem{elgrou1D23insensi24}
M. Baroun, S. Boulite, A. Elgrou, and O. Oukdach.
\newblock Insensitizing controls for stochastic parabolic equations with dynamic boundary conditions. 
\newblock{\em IMA Journal of Mathematical Control and Information}, \textbf{42} (2025).


\bibitem{bhann24}
K. Bhandari.
\newblock Insensitizing control problem for the Hirota–Satsuma system of KdV-KdV type. 
\newblock{\em Nonlinear Analysis}, \textbf{239} (2024), 113422.


\bibitem{ghadsanta24}
K. Bhandari and V. Hernández-Santamaría. 
\newblock Insensitizing control problems for the stabilized Kuramoto–Sivashinsky system. 
\newblock {\em ESAIM: Control, Optimisation and Calculus of Variations}, \textbf{30} (2024), 73.


\bibitem{bodafabre95}
O. Bodart and C. Fabre. 
\newblock Controls insensitizing the norm of the solution of a semilinear heat-equation. 
\newblock {\em Journal of Mathematical Analysis and Applications}, \textbf{195} (1995), 658--683.


\bibitem{BodarBurgosPerez2004}
O. Bodart, M. González-Burgos, and R. Pérez-García.
\newblock  Existence of insensitizing controls for a semilinear heat equation with a superlinear nonlinearity. 
\newblock {\em Communications in Partial Differential Equations}, \textbf{29} (2004), 1017--1050.





\bibitem{BodarBurgosPerez04NonAnalysis}
O. Bodart, M. González-Burgos, and R. Pérez-García.
\newblock Insensitizing controls for a heat equation with a nonlinear term involving the state and the gradient.
\newblock {\em Nonlinear Analysis: Theory, Methods \& Applications}, \textbf{57} (2004), 687--711.

	

\bibitem{BodaGnPer}
O. Bodart, M. González-Burgos, and R. Pérez-García.  
\newblock Insensitizing controls for a semilinear heat equation with a superlinear nonlinearity. 
\newblock {\em Comptes Rendus Mathematique}, \textbf{335} (2002), 677--682.






\bibitem{Preprielgr24jmaa}
S. Boulite, A. Elgrou, and L. Maniar.
\newblock Null controllability for cascade systems of coupled backward stochastic parabolic equations with one distributed control. 
\newblock{\em Journal of Mathematical Analysis and Applications}, \textbf{549} (2025), 129489.


\bibitem{bouetman25}
I. Boutaayamou, F. Et-tahri, and L. Maniar.
\newblock Insensitizing controls of a volume-surface reaction-diffusion equation with dynamic boundary conditions. 
\newblock{\em Nonlinear Differential Equations and Applications}, \textbf{32} (2025), 122.


\bibitem{boyhater19}
F. Boyer, V. Hernández-Santamaría, and L. de Teresa.
\newblock Insensitizing controls for a semilinear parabolic equation: a numerical 
approach.
\newblock{\em Mathematical Control and Related Fields}, \textbf{9} (2019), 117--158.


\bibitem{capfiltanka20}
R. D. A. Capistrano–Filho and T. Y. Tanaka. 
\newblock Controls insensitizing the norm of solution of a Schrödinger type system with mixed dispersion. 
\newblock{\em Journal of Differential Equations}, \textbf{416} (2025), 357--395.

\bibitem{Carl39}
T. Carleman. 
\newblock Sur un problème d’unicité pour les systèmes d’équations aux dérivées partielles à deux variables indépendantes.
\newblock{\em Arkiv för Matematik, Astronomi och Fysik}, \textbf{26} (1939), 1--9.



\bibitem{coron07}
J.-M. Coron.
\newblock Control and nonlinearity. 
\newblock{\em American Mathematical Society}, (2007).




\bibitem{calcarcerpa16}
B. M. Calsavara, N. Carreno, and E. Cerpa.
\newblock Insensitizing controls for a phase field system. 
\newblock{\em Nonlinear Analysis: Theory, Methods \& Applications}, \textbf{143} (2016), 120--137.

\bibitem{DapratoZabcz}
G. Da Prato and J. Zabczyk. 
\newblock Stochastic equations in infinite dimensions. 
\newblock{\em Cambridge university press}, 2014.

\bibitem{Djomkenn25}
L. Djomegne and C. Kenne. 
\newblock Insensitizing control of nonlinear coupled parabolic systems with a nonlocal spatial term. 
\newblock{\em Journal of Mathematical Analysis and Applications}, \textbf{558} (2026), 130359.




\bibitem{fernandez2006global}
E. Fern{\'a}ndez-Cara and S. Guerrero.
\newblock Global Carleman inequalities for parabolic systems and applications to controllability.
\newblock {\em SIAM Journal on Control and Optimization}, \textbf{45} (2006), 1395--1446.


\bibitem{elgomar26}
A. Elgrou and O. Oukdach. 
\newblock Multi-objective and hierarchical control for coupled stochastic parabolic systems.
\newblock {\em Numerical Algebra, Control and Optimization},  (2026).
 



 
\bibitem{fursikov1996controllability}
A. V. Fursikov and O. Yu. Imanuvilov.
\newblock Controllability of evolution equations. 
\newblock{\em Lecture Note Series 34, Research Institute of Mathematics, Seoul National University,} (1996).






\bibitem{gureSiam07}
S. Guerrero.
\newblock Null controllability of some systems of two parabolic equations with one control force. 
\newblock {\em SIAM Journal on Control and Optimization}, \textbf{46} (2007), 379--394.

\bibitem{kass20}
K. Kassab.
\newblock Negative and positive controllability results for coupled systems of second and fourth order parabolic equations. 
\newblock {\em Preprint}, (2020).



	
\bibitem{krylov1994}
N. V. Krylov.
\newblock A $W^n_2$ -theory of the Dirichlet problem for SPDEs in general smooth domains. 
\newblock{\em Probability Theory and Related Fields,} \textbf{98} (1994), 389--421.
 

\bibitem{kumamaj25}
M. Kumar and S. Majumdar. 
\newblock Insensitizing control problem for the Kawahara equation. 
\newblock{\em Nonlinear Differential Equations and Applications NoDEA}, \textbf{32} (2025), 1--38.







\bibitem{lions1989quel}
J.-L. Lions. 
\newblock Remarques préliminaires sur le contrôle des systemesa données incompletes.
\newblock {\em In Actas del Congreso de Ecuaciones Diferenciales y Aplicaciones (CEDYA), Universidad de Malaga}, (1989), 43--54.




	
	
\bibitem{liu2014global}
X. Liu.
\newblock Global Carleman estimate for stochastic parabolic equations, and its application.
\newblock{\em ESAIM: Control, Optimisation and Calculus of Variations}, \textbf{20} (2014), 823--839.


 

\bibitem{liu14couplfor}
X. Liu.
\newblock Controllability of some coupled stochastic parabolic systems with fractional order spatial differential operators by one control in the drift. 
\newblock{\em SIAM Journal on Control and Optimization,} \textbf{52} (2014), 836--860.


\bibitem{LiuuLiuX}
 L. Liu and X. Liu.
 \newblock Controllability and observability of some coupled stochastic parabolic systems. 
 \newblock{\em Mathematical Control and Related Fields}, \textbf{8} (2018), 829--854.

 
\bibitem{liu2019carleman}
X. Liu and Y. Yu.
\newblock Carleman estimates of some stochastic degenerate parabolic equations and application.
\newblock {\em SIAM Journal on Control and Optimization}, \textbf{57} (2019), 3527--3552.

\bibitem{luliu25}
Y. Lu and L. Liu. 
\newblock  Insensitizing controls for stochastic Kuramoto–Sivashinsky equation. 
\newblock {\em Mathematical Methods in the Applied Sciences}, \textbf{49} (2026), 6884–6897.

\bibitem{lu2011some}
Q. L{\"u}.
\newblock Some results on the controllability of forward stochastic heat equations with control on the drift.
\newblock{\em Journal of Functional Analysis}, \textbf{260} (2011), 832--851.

\bibitem{luZhang22mcrf}
Q. L{\"u} and X. Zhang.
\newblock A concise introduction to control theory for stochastic partial differential equations.
\newblock{\em Mathematical Control and Related Fields,} \textbf{12} (2022), 847--954.


\bibitem{lu2021mathematical}
Q. L{\"u} and X. Zhang.
\newblock Mathematical control theory for stochastic partial differential equations. 
\newblock{\em Springer
Nature Switzerland AG, Cham,} (2021).


\bibitem{msacar26}
R. Morales, M. C. Santos, and N. Carreño. (2026). 
\newblock Insensitizing the tangential gradient for reaction–diffusion equations with dynamic boundary conditions. 
\newblock{\em Journal of Optimization Theory and Applications}, \textbf{208} (2026), 55.

\bibitem{santa19}
M. C. Santos and T. Y. Tanaka. 
\newblock An insensitizing control problem for the Ginzburg–Landau equation. 
\newblock{\em Journal of Optimization Theory and Applications}, \textbf{183} (2019), 440--470.


\bibitem{omboelman25sec}
O. Oukdach, S. Boulite, A. Elgrou, and L. Maniar.
\newblock Stackelberg–Nash null controllability for stochastic parabolic equations. 
\newblock{\em Mathematical Methods in the Applied Sciences}, \textbf{48} (2025), 13164--13176.





\bibitem{tang2009null}
S. Tang and X. Zhang.
\newblock Null controllability for forward and backward stochastic parabolic equations.
\newblock {\em SIAM Journal on Control and Optimization}, \textbf{48} (2009), 2191--2216.


\bibitem{Tereza2000}
L. D. Teresa.  
\newblock Insensitizing controls for a semilinear heat equation: semilinear heat equation. 
\newblock{\em Communications in Partial Differential Equations}, \textbf{25} (2000), 39--72.

 
\bibitem{Tereza97Esaim}
L. D. Teresa. 
\newblock Controls insensitizing the norm of the solution of a semilinear heat equation in unbounded domains.
\newblock{\em ESAIM: Control, Optimisation and Calculus of Variations}, \textbf{2} (1997), 125--149.






\bibitem{Tereidenfication}
L. D. Teresa and E. Zuazua.
\newblock Identification of the class of initial data for the insensitizing control of the heat equation. 
\newblock{\em Communication on Pure and Applied Analysis}, \textbf{8} (2009), 457--471.




\bibitem{yansun2011}
Y. Yan and F. Sun.
\newblock Insensitizing controls for a forward stochastic heat equation. 
\newblock{\em  Journal of Mathematical Analysis and Applications}, \textbf{384} (2011), 138--150.


\bibitem{yuzhang23t}
Y. Yu and J.-F. Zhang.
\newblock Two multiobjective problems for stochastic degenerate parabolic equations.
\newblock{\em  SIAM Journal on Control and Optimization}, \textbf{61} (2023), 2708–2735.







 



\bibitem{observineqback}
D. Yang and J. Zhong. 
\newblock Observability inequality of backward stochastic heat equations for measurable sets and its applications. 
\newblock{\em SIAM Journal on Control and Optimization}, \textbf{54} (2016), 1157--1175.



\bibitem{zhanyingaodbc19}
M. Zhang, J. Yin, and H. Gao. 
\newblock Insensitizing controls for the parabolic equations with dynamic boundary conditions. 
\newblock{\em Journal of Mathematical Analysis and Applications}, \textbf{475} (2019), 861--873.

 




 \end{thebibliography}
\end{document}